\newtheorem{Theorem}{Theorem}[section]
\newtheorem{Proposition}[Theorem]{Proposition}
\newtheorem{Lemma}[Theorem]{Lemma}
\newtheorem{Corollary}[Theorem]{Corollary}
\theoremstyle{definition}
 \theoremstyle{remark}
       \def\@makefnmark{%
               \leavevmode
               \raise.9ex\hbox{\check@mathfonts
                       \fontsize\sf@size\z@\normalfont%
                               \@thefnmark}%
       }
\def\ts{\hskip 0.4mm}
\def\tts{\hskip 0.2mm}
\def\nts{\hskip-0.2mm}
\newfont{\BBb}{msbm10 scaled\magstep1}
\newfont{\sBbb}{msbm7 scaled\magstep1}
\newcommand{\ZZ}{\mbox{\BBb Z}}
\newcommand{\sZZ}{\mbox{\sBbb Z}}
\def\Sum{\textstyle\sum\limits}
\def\Prod{\textstyle\prod\limits}
\def\Frac#1#2{\displaystyle\frac{\mathstrut\raisebox{-.3ex}{$\;#1\;$}}{\;#2\;}}
\def\Sum{\displaystyle\sum}
\def\wtilde{\widetilde}
\def\ts{\hskip 0mm}
\DeclareMathOperator{\Img}{Im}
\def\mathvskip{\vskip\abovedisplayskip}
\def\trdeg{{\rm tr\,deg}}
\thanks{Partially supported by the Grant-in-Aid for Scientific Research (C) 
21540104, Japan Society for the Promotion of Science.
}
\title[Cohomology mod 3 of the classifying space of $E_6$, II]%
{Cohomology mod 3 of the classifying space of the exceptional Lie group $E_6$, II : %
The Weyl group invariants
}
\author[M.Mimura]{Mamoru Mimura}
\address{Department of Mathematics
Faculty of Science Okayama University;
3-1-1 Tsushima-Naka, Okayama
700-8530, Japan}
\email{mimura@math.okayama-u.ac.jp}
\author[Y.Sambe]{Yuriko Sambe}
\address{
School of Business Administration
Senshu University;
2-1-1 Tama-ku, Kawasaki
214-8580, Japan}
\email{thm0166@isc.senshu-u.ac.jp}
\author[M.Tezuka]{Michishige Tezuka}
\address{Department of Mathematical Sciences
Faculty of Science University of Ryukyus; 
Nishihara-cho, Okinawa 903-0213, Japan}
\email{tez@math.u-ryukyu.ac.jp}
\begin{document}
\maketitle

\begin{abstract}
We calculate the Weyl group invariants with respect to 
a maximal torus of the exceptional Lie group $E_6$.
\vspace{4pt}\\
2010 Mathematics Subject Classification. 55R35, 55R40, 55T20
\vspace{4pt}\\
Key words and phrases, exceptional Lie group, Weyl group
\end{abstract}

\section{Introduction.}
     The present paper is a sequel to \cite{MSTe2}.

     Let  $E_6$  be the compact,  simply connected, 
exceptional  Lie group of rank  6,  
$T$  its maximal torus,  $BX$  the classifying space of  $X$
for  $X = E_6$ or $T$,  and  $W(E_6)$  the Weyl group 
which acts on $H^*(BT\ts ;\ZZ)$, and hence on $H^*(BT\ts ;\ZZ_3)$, as usual.
As is well known,  the invariant subalgebra
$H^*(BT\ts;\ZZ_3)^{W(E_6)}$   contains the image 
of the homomorphism $B\ts i\ts^* : H^*(BE_6\ts;\ZZ_3) \to H^*(BT\ts;\ZZ_3)$.

     The Rothenberg\ts-Steenrod spectral sequence %
$\{E_r(X),d\tts_r\}$  for $X$  a
compact associative  $H$\nts-space has
$$\begin{array}{llllllll}
E\tts_2(X) &= {\rm Cotor}_A(\ZZ_p,\ZZ_p) \quad 
                  \mbox{with}\quad A = H^*(X\ts;\ZZ_p),\\
E_{\infty}(X)\;&= GrH^*(BX\ts;\ZZ_p)
\end{array}$$
where  $p$  is a prime number.

     The  $E_2$\tts-\tts term  Cotor$_A(\ZZ_p,\ZZ_p)$  for  $X=E_6$  and  $p=3$  
is calculated in  \cite{MS} (see also [MST3]).  
In this series of papers,  
it will be shown that the spectral sequence
associated with  $E_6$  collapses 
and the ring structure of $H^*(BE_6\ts;\ZZ_3)$
will be determined  
(cf.  \cite{KM}  where some of the ring structure of
                       $H^*(BE_6\ts;\ZZ_3)$  was determined).

     The present paper is organized as follows.  
In  Section 2,  the action
of the Weyl group  $W(E_6)$  on  $H^*(BT\ts;\ZZ_3)$  
will be described in terms
of some generators of $H^*(BT\ts;\ZZ_3)$.  
From this,  6  elements in
$H^*(BT\ts;\ZZ_3)^{W(E_6)}$  are immediately obtained,  
that is,  $x_4$, $x_8$, $y_{10}$, $x_{20}$,
$y_{22}$  and  $y_{26}$.  
In  Section 3,  another set  $S$  of elements in
$H^*(BT\ts;\ZZ_3)$  which is invariant as a set under 
$W(E_6)$ is considered.
The elementary symmetric functions on the elements of  $S$  are
$W(E_6)$\ts-\ts invariant and we find another element  $x_{36}$.  
In Section 4,  it is shown firstly that
$$ H^*(BT\ts;\ZZ_3)^{W(E_6)} \subset  
           \ZZ_3(x_4)[x_8,y_{10},x_{20},y_{22},x_{36}].$$
Then  6  more generators of  $H^*(BT\ts;\ZZ_3)^{W(E_6)}$ 
are totally determined explicitly.  We prove

\bigskip
     {\bf Theorem 4.13}\quad
{\sl $H^*(BT\ts;\ZZ_3)^{W(E_6)}$  is generated 
by the following thirteen elements\ts:}
$$x_4,\  x_8,\  y_{10},\  x_{20},\  y_{22},\  y_{26},\  
x_{36},\  x_{48},\  x_{54},\  y_{58},\  y_{60},\  y_{64},\  y_{76}.$$

We thank T. Torii for suggesting us appropriate references on Galois theory and M. Nakagawa for pointing a fact stated in \cite{LS}. 
We also thank A. Kono for reading the manuscript and giving us
valuable suggestions. Last but not least, we thank Prof. H. Toda for tutoring us the relation between the Dynkin diagram and the generators of the Weyl group. 

This paper has taken a long time to write. 
In fact, the results of the paper were announced in [T].

\section{Some invariant elements in low degrees.}
  Let  \ts$T \subset E_6$  be a fixed maximal torus of $E_6$,
         \ts$V_T$  the universal covering of \ts$T$,
          \ts$V_T^*$  \ts the dual of  \ts$V_T$.
According to  Bourbaki \cite{B},  the Dynkin diagram is 
given as follows\ts:
$$
\setlength{\unitlength}{2pt}
\begin{picture}(130,30)(-10,-20)
\multiput(12.3, 0.3)(20,0){4}{\line(1,0){17.8}}
\put(10,-1){$\circ$}
\put(30,-1){$\circ$}
\put(50,-1){$\circ$}
\put(70,-1){$\circ$}
\put(90,-1){$\circ$}
\put(10, 4.5){$\alpha_1$}
\put(30, 4.5){$\alpha_3$}
\put(50, 4.5){$\alpha_4$}
\put(70, 4.5){$\alpha_5$}
\put(90, 4.5){$\alpha_6$}
\put(50,-19){$\circ$}
\put(55,-19){$\alpha_2$}
\put(51.4,-1.1){\line(0,-1){15.1}}
\end{picture}$$
where  \ts$\alpha\tts_i \in V_T^*$ \  for \  $i=1,2, \ldots , 6$  
are the simple roots of $E_6$.
Let $\langle\;,\,\rangle$  be the invariant metric 
on the Lie algebra of \ts$E_6$, $V_T$  and  $V_T^*$,  
normalized in such a way that
$$\begin{array}{llllllll}
 \smallskip\langle\alpha\tts_i,\alpha\tts_i\rangle &= 2,        & \\
 \smallskip\langle\alpha\tts_i,\alpha_j    \rangle &= -1 \qquad &
               \mbox{if }  \ i \neq j \  \mbox{ and }  \ \alpha\tts_i  
               \mbox{ and }  \alpha_j\ts  \mbox{ are connected,} \\
 \langle\alpha\tts_i,\alpha_j\rangle& = 0          & \mbox{otherwise.}
\end{array}$$

Let  $\beta_j$  be the corresponding fundamental weights\ts:
               $\langle \, \beta_j,\alpha\tts_i \rangle = \delta_{ij}$.
Let us denote by  $R\tts_i$  the reflection with respect to the
               hyperplane  \  $\alpha\tts_i = 0$.  Then
$$R\tts_i(\beta\tts_i) = \beta\tts_i 
   - \Sum_j \langle \alpha\tts_i, \alpha_j \rangle \, \beta_j
   \quad\mbox{and}\quad   R\tts_i(\beta_j) = \beta_j
   \enskip\mbox{ for }\enskip i \neq j.
$$

Denote by $U$  the centralizer of the torus  $T\tts^1$  defined by
          $\langle\alpha\tts_i,t \ts\rangle = 0$  for  $i=2,3,\ldots ,6$  
and  $t\in T$.
Then $U$  is a closed connected subgroup of maximal rank and of local type
$D_5\times T^1$  such that  $D_5\cap T^1 = \ZZ_4$ (see \ts[\ts V\ts]\ts).
The  Weyl groups  $W(E_6)$  and  $W(U)$  are generated by
  $R\tts_1$, $R\tts_2$, \ldots, $R\tts_6$  
  and  $R\tts_2$, \ldots, $R\tts_6$,  respectively.

Put
\begin{equation}\begin{array}{llllllll}
     & \tau_6&=\beta_6,&\\
     & \tau_5&=R\tts_6(\tau_6)&=\beta_5-\beta_6,\\
     & \tau_4&=R\tts_5(\tau_5)&=\beta_4-\beta_5,\\
     & \tau_3&=R\tts_4(\tau_4)&=\beta_2+\beta_3-\beta_4,\\
     & \tau_2&=R\tts_3(\tau_3)&=\beta_1+\beta_2-\beta_3,\\
     & \tau_1&=R\tts_1(\tau_2)&=-\beta_1+\beta_2,\\
     & x &=\beta_2& = {1\over3}\,(\tau_1+\tau_2+\cdots +\tau_6).
\label{eq:11}
\end{array}
\end{equation}
Then  $\beta\tts_i$  are linear combinations of \ts$\tau_j$'s  
and  $x$  as follows\ts:
$$\begin{array}{llllllll}
  &\beta_1=x-\tau_1,\quad & \beta_2=x, \quad 
              & \beta_3=-x+\tau_3+\tau_4+\tau_5+\tau_6,\\
  &\beta_4=\tau_4+\tau_5+\tau_6, \quad & \beta_5=\tau_5+\tau_6,\quad 
              & \beta_6=\tau_6.
\label{eq:11$'$}
\end{array}$$

     Further we put 

\medskip
$\qquad\quad
t=x-\tau_1\enskip\mbox{ and }\enskip 
t\tts_i=\tau_{i+1}- {\textstyle{1\over2}}\,t\enskip
\mbox{ for }\enskip  i=1,\ldots ,5.$

\medskip\noindent
Then we have
\begin{equation}
\begin{array}{lll}
  &t &=\beta_1,\\
  &t_1&= {1\over2}\,\beta_1+\beta_2-\beta_3,\\
  &t_2&=-{1\over2}\,\beta_1+\beta_2+\beta_3-\beta_4,\\
  &t_3&=-{1\over2}\,\beta_1+\beta_4-\beta_5,\\
  &t_4&=-{1\over2}\,\beta_1+\beta_5-\beta_6,\\
  &t_5&=-{1\over2}\,\beta_1+\beta_6,
\label{eq:12}
\end{array}
\end{equation}
and
\begin{equation}
\begin{array}{lll}
\hspace{22mm}
& \beta_1 &=t,\\
& \beta_2 &= {3\over4}\,t + {1\over2}\,t_1 + {1\over2}\,t_2
             + {1\over2}\,t_3 + {1\over2}\,t_4 + {1\over2}\,t_5,\\%
\hspace{-33.2mm}\lower2.0ex\hbox{(2.2)$'$}
& \beta_3 &= {5\over4}\,t - {1\over2}\,t_1 + {1\over2}\,t_2
             + {1\over2}\,t_3 + {1\over2}\,t_4 + {1\over2}\,t_5,\\[-1.6ex]
& \beta_4 &= {3\over2}\,t + t_3 + t_4 + t_5,\\
& \beta_5 &= t + t_4 + t_5,\\
& \beta_6 &= {1\over2}\,t + t_5.
\nonumber
\end{array}
\end{equation}

     Denote \  $t_1+t_2+t_3+t_4+t_5$ \ by \ $c_1$. Then we have
\begin{equation}
     c_1 = t_1 +t_2 +\cdots+t_5  
          = \tau_2 +\tau_3 +\cdots+\tau_6 - {\textstyle{5\over2}}\ts t 
          = 2x- {\textstyle{3\over2}}\ts t.
\label{eq:13}
\end{equation}

The  $R\ts_i$\ts-operations are given by
$$\baselineskip20pt
\halign{\qquad$#$\qquad\hfil &&\hfil$#$\qquad\hfil \cr
    & R\tts_1 &              R\tts_2 &   R\tts_3 &   R\tts_4 &   R\tts_5 &   R\tts_6\cr
 t  & {1\over4}\,t-t_1+{1\over2}\,c_1\cr
 t_1& -{3\over8}\,t+{1\over2}\,t_1+{1\over4}\, c_1   &  -t_2 &   t_2\cr
 t_2& {3\over8}\,t+{1\over2}\,t_1+t_2-{1\over4}\,c_1 &  -t_1 &   t_1 
                    &   t_3\cr
 t_3& {3\over8}\,t+{1\over2}\,t_1+t_3-{1\over4}\,c_1 &       &        
                    & t_2   & t_4\cr
 t_4& {3\over8}\,t+{1\over2}\,t_1+t_4-{1\over4}\,c_1 &       &        
                    &       &t_3    &t_5\cr
 t_5& {3\over8}\,t+{1\over2}\,t_1+t_5-{1\over4}\,c_1 &       &        
                    &       &       &t_4\cr
}$$
where the blanks indicate the trivial action.
     Taking coefficients in  $\ZZ_3$,  we have
\begin{equation}
\begin{array}{llll}
  & R\tts_1(t)&=t-(t_1+c_1),&\cr
  & R\tts_1(t_1)&=-t_1+c_1,&\cr
  & R\tts_1(t\tts_i)&=t\tts_i-(t_1+c_1)&\qquad\quad 
                                 \mbox{for } \ i = 2,3,4,5.
\label{eq:14}
\end{array}
\end{equation}

          Roots or weights will be considered in the usual way (cf. \cite{BH})  as
elements of  $H^*(T)$,  $H^*(T\ts;\ZZ_3)$, $H^*(BT)$  
or $H^*(BT\ts;\ZZ_3)$.
Then the weights  $\beta_1,\ldots ,\beta_6$  generate  
$H^*(BT)$  as well as  $H^*(BT\ts;\ZZ_3)$
and so do the elements  $t, t_1, \ldots , t_5$:
$$ H^*(BT\ts;\ZZ_3) = \ZZ_3[\ts t, t_1, \ldots , t_5]
\quad \mbox{with }\enskip t,t_1,t_2,\ldots,t_5 \in H^2(BT\ts;\ZZ_3).$$

    From now on until the end of Section 2 all coefficients are in  $\ZZ_3$.

\bigskip
The  Weyl group  $W(E_6)$  acts on  $BT$  and hence on  
$H^*(BT\ts;\ZZ_3)$, and the invariant subalgebra  
$H^*(BT\ts;\ZZ_3)^{W(E_6)}$  
contains the image of the homomorphism   
$B\ts i\ts^* : H^*(BE_6\ts;\ZZ_3) \to H^*(BT\ts;\ZZ_3)$  induced by the
natural map  $B\ts i : BT  \to  BE_6$.
The action of  $W(U)$  is the same as the usual action of 
$W(SO(10))$.
Thus we obtain
\begin{equation}
\begin{array}{c}
{\rm Im}\;B\ts i\ts^*(H^*(BE_6\ts;\ZZ_3)) 
                  \subset H^*(BT\ts;\ZZ_3)^{W(E_6)}\\ 
\subset H^*(BT\ts;\ZZ_3)^{W(U)} 
                  = \ZZ_3[\ts t,p_1,p_2,c_5,p_3,p_4]
\label{eq:15}
\end{array}
\end{equation}
where  $c\ts_i=\sigma_i(t_1,t_2,\ldots ,t_5)$  and  
$p\tts_i=\sigma_i(t_1^2,t_2^2,\ldots,t_5^2)$  are
the elementary symmetric functions 
on \ts$t\tts_i$ \ts and \ts$t_j^2$, \ts respectively.

     We shall determine $H^*(BT\ts;\ZZ_3)^{W(E_6)}$,  that is, 
$R\tts_1$-invariant elements in $\ZZ_3[\ts t,p_1,p_2,c_5,p_3,p_4]$.

     We obtain, from the simple relation between  
$c\ts_i$'s and $p_j$'s, that
\begin{equation}
\begin{array}{lll}
      & c_2&=p_1-c_1^2,\cr
      & c_4&=-p_2+c_1^4+c_1^2p_1+c_1c_3+p_1^2,\cr
      & c_3c_5&=p_4-c_4^2,\cr
      & c_3^2 &=p_3+c_1^6+c_1^3c_3-c_1^2p_2-c_1c_3p_1
                                         +c_1c_5-p_1^3+p_1p_2.
\label{eq:16}
\end{array}
\end{equation}
The equality $0 =  \prod\limits_{i=1}^5 (t_1-t\tts_i)$  gives rise to
$$          t_1^5=c_5-t_1c_4+t_1^2c_3-t_1^3c_2+t_1^4c_1.$$
     Put  \ts$b=t_1+c_1$,  then we have
\begin{equation}
\begin{array}{ll}
&b^5=(-c_1^5-c_1^3p_1-c_1^2c_3+c_1p_1^2-c_1p_2+c_5)\cr
     & +\;b(-c_1^4-c_1^2p_1-p_1^2+p_2)
               +b^2(c_1^3+c_3)-b^3(c_1^2+p_1).
\label{eq:17}
\end{array}
\end{equation}
     From  (\ref{eq:14}),
and replacing 
$c_2$, $c_4$, $c_3c_5$, $c_3^2$  and  $b^5$  by  
(\ref{eq:16}) and  (\ref{eq:17}),  
we have
\begin{equation}
\begin{array}{lll}
 & R\tts_1(t)&=t-b,\cr
 & R\tts_1(b)&=-b,\cr
 & R\tts_1(c_1)&=c_1,\cr
 & R\tts_1(p_1)&=p_1, \cr
 & R\tts_1(p_2)&=p_2 ,\cr
 & R\tts_1(c_3)&=c_3-b(c_1^2+p_1)+b^3, \cr
 & R\tts_1(c_5)&=c_5+b(p_1^2-p_2)-b^3p_1,\cr
 & R\tts_1(p_3)&=p_3+b(-c_1^3p_1-c_1p_1^2+c_3p_1+c_5)
                                +b^2(c_1^2p_1+p_2)-b^3c_1p_1,\cr
 & R\tts_1(p_4)&=p_4+b(-c_1^5p_1+c_1^3p_1^2+c_1^3p_2-c_1^2c_3p_1
                                +c_3p_1^2-c_3p_2+c_5p_1)\cr
 &&\qquad \;+\;b^2(-c_1^4p_1-c_1^2p_2)
              +b^3(c_1^3p_1-c_1p_1^2+c_1p_2+c_3p_1+c_5)\cr
 &&\qquad \;+\;b^4(-c_1^2p_1-p_2).
\label{eq:18}
\end{array}
\end{equation}
Thus the following three elements are clearly  $R\tts_1$-invariant and hence $W(E_6)$-invariant:
\begin{equation}
 x_4=p_1, \qquad  x_8=p_2-p_1^2, \qquad y_{10}=c_5-tx_8-t^3x_4.
\label{eq:19}
\end{equation}
Put
\begin{equation}
        h_{12}=p_3+ty_{10}-t^2x_8-t^4x_4 \quad \mbox{and} \quad 
        h_{16}=p_4+t^3y_{10}-t^4x_8-t^6x_4.
\label{eq:110}
\end{equation}
Then we see that 
\begin{equation}
R\tts_1(h_{12})=h_{12}+d_8x_4 \quad\mbox{and}\quad 
    R\tts_1(h_{16})=h_{16}-d_8x_8,
\label{eq:111}
\end{equation}
where
\begin{equation}
d_8=b(-t^3-c_1^3-c_1x_4+c_3)+b^2(c_1^2+x_4)+b^3(t-c_1)-b^4
\label{eq:112}
\end{equation}
and
\begin{equation}
  R\tts_1(d_8)=-d_8. 
\label{eq:113}
\end{equation}

It follows that 
$$
 \ZZ_3[\ts t,p_1,p_2,c_5,p_3,p_4] 
                = \ZZ_3[\ts t,x_4,x_8,y_{10},h_{12},h_{16}].$$

     Further,  we put
\begin{equation}
h_{18}=t\ts(h_{16}-x_8^2)+t^3(-h_{12}+x_4x_8)+t^5x_8-t^7x_4+t^9
\label{eq:114}
\end{equation}
which is taken so as to satisfy
\begin{equation}
 R\tts_1(h_{18})=h_{18}+d_8y_{10}.
\label{eq:115}
\end{equation}
It follows from (\ref{eq:111}) and (\ref{eq:115})
that the following three elements are
$R\tts_1$-invariant and hence $W(E_6)$-invariant:
\begin{equation}
\begin{array}{ll}
& x_{20}=h_{12}x_8+h_{16}x_4,\cr 
& y_{22}=h_{12}y_{10}-h_{18}x_4,\cr
& y_{26}=h_{16}y_{10}+h_{18}x_8,
\label{eq:116}
\end{array}
\end{equation}
among which holds the following relation\ts:
\begin{equation}
 -x_{20}y_{10}+y_{22}x_8+y_{26}x_4=0.
\label{eq:117}
\end{equation}

Summing up we have found the first six $W(G)$-invariant elements\ts:
$$x_4,\  x_8,\  y_{10},\  x_{20},\  y_{22},\  y_{26}.$$

\section{The element  $x_{36}$.}

     In order to obtain another $R_1$-invariant element,  we consider
the following set  $S$  (cf. [TW]).  Put
\begin{equation}
     w_i  = 2\ts\tau_i -x  \qquad \mbox{for}\quad i = 1,2,\ldots ,6
\label{eq:21}
\end{equation}
and denote by  $S$  the set
$$ \{\;w_i +w_j \enskip\mbox{ for }\enskip i<j,
                      \quad x-w_i,\quad -x-w_i \;\}. $$

We see that the set  $S$  is invariant as a set under the action
of  $W(E_6)$.  Therefore the elementary symmetric functions  $\sigma_i^S$ %
on the 27 elements of \ts$S$
are invariant under the action of  $W(E_6)$.  

     We shall calculate 
\begin{equation}
\begin{array}{lll}
P =& 1+ \Sum_{j=1}^{27} \sigma_j^S 
      &= \Prod_{y\in S}(1+y)\cr
&     &= \Prod_{1\leq i<j\leq 6}(1+w_i +w_j)
                 \cdot \Prod_{1\leq j\leq 6}(1+x-w_j)\cdot \Prod_{1\leq j\leq 6} (1-x-w_j)
\label{eq:22}
\end{array}
\end{equation}
and decompose the result by degree to obtain $\sigma_j^S$.

\medskip
     Since we have
$$x={}-c_1, \quad w_1=t-c_1 \quad \mbox{and} \quad 
	w_i =t+c_1-t\ts_{i-1}  \enskip  \mbox{for}  \enskip i>1$$ %
by (\ref{eq:21}), (\ref{eq:11}), (\ref{eq:12})$'$  and  (\ref{eq:13}), 
the polynomial $P$ is expressed in terms of $c_i$'s and $t$.
(The calculation is carried out by Mathematica and the result has 2600 terms. )

We have from  (\ref{eq:16}), (\ref{eq:19}) and (\ref{eq:110})\; : 
\renewcommand{\arraystretch}{1.2}
\begin{equation}
\begin{array}{lrllllllll}
& c_2 & = & {}- c_1^2 + x_4,\\   
& c_4 & = & c_1^4 + c_1 c_3 + c_1^2 x_4 - x_8,\\
& c_5 & = & y_{10} + x_8 t + x_4 t^3,\\
& c_3^2 & = & h_{12} + t^4x_4 + t^2x_8 - t \ts y_{10} + x_4x_8
    + t^3x_4c_1 + t \ts x_8c_1 - x_4^2c_1^2 - x_4c_1c_3 
    - x_8c_1^2 \\
&&& {}+ y_{10}c_1 + c_1^6 + c_1^3c_3,\\
& c_3y_{10} & = & h_{16} + t^6x_4 + t^4x_8 - t^3y_{10} - x_8^2  
    - t^4x_4c_1^2 - t^3x_4c_1^3 - t^3x_4c_3 - t^2x_8c_1^2
    - t \ts x_8c_1^3 \\
& &&
    - t \ts x_8c_3 + t \ts y_{10}c_1^2 
    - h_{12}c_1^2 + x_4x_8c_1^2 + x_4c_1^6 - x_4c_1^3c_3 
    - x_8c_1c_3
    - y_{10}c_1^3 + c_1^8.
\label{eq:29}
\end{array}
\end{equation}

\medskip\noindent
Rewrite $P$ making use of (\ref{eq:29}) and 
we find no $c_3$ (as well as $c_2$, $c_4$ and $c_5$) in 
the result $P_1$ and $P_1 \in \ZZ_3[c_1,t,x_4,x_8,y_{10},h_{12},h_{16}]$.

\bigskip
We use two more relations to get rid of $c_1$'s. 

\bigskip
Replacing $c_3^2$ in the equality $c_3^2y_{10} - c_3\cdot c_3y_{10} 
= 0$, we obtain a relation:
\begin{eqnarray}
\label{eq:210}
& h_{16}c_3 = & 
   t^9x_4 + t^7x_4^2 + t^7x_8 - t^6y_{10} - t^5x_4x_8 
         + t^3h_{12}x_4 + t^3h_{16} + t^3x_4^2x_8 \\
  && +\; th_{12}x_8 + tx_4x_8^2 - ty_{10}^2 + h_{12}y_{10} 
         + x_4x_8y_{10} + t^7x_4c_1^2 + t^6x_4c_1^3 + t^6x_4c_3 \nonumber\\
  && +\; t^5x_4c_1^4 + t^5x_8c_1^2 - t^4x_4^2c_1^3 - t^4x_4x_8c_1 
         - t^4x_4c_1^5 - t^4x_4c_1^2c_3 + t^4x_8c_1^3 \nonumber\\
  && +\; t^4x_8c_3 - t^4y_{10}c_1^2 - t^3h_{12}c_1^2 - t^3x_4^3c_1^2 
         - t^3x_4^2c_1^4 + t^3x_4x_8c_1^2 - t^3x_4c_1^3c_3 \nonumber\\
  && +\; t^3x_8c_1^4 - t^3x_8c_1c_3 + t^3c_1^8 - t^2x_4x_8c_1^3 
        - t^2x_8^2c_1 - t^2x_8c_1^5 - t^2x_8c_1^2c_3 \nonumber\\
    && -\; t^2y_{10}c_1^4 + th_{12}c_1^4 - th_{16}c_1^2 - tx_4^2x_8c_1^2 
               + tx_4x_8c_1^4 + tx_4y_{10}c_1^3 - tx_4c_1^8 \nonumber\\
  && +\; tx_4c_1^5c_3 + tx_8^2c_1^2 + tx_8y_{10}c_1 - tx_8c_1^6 
          + tx_8c_1^3c_3 - tc_1^{10} - h_{12}x_4c_1^3 \nonumber\\
  && +\; h_{12}x_8c_1 + h_{12}c_1^5 + h_{12}c_1^2c_3 - h_{16}x_4c_1 
          - h_{16}c_1^3 - x_4^3c_1^5 - x_4^2x_8c_1^3 \nonumber\\
  && -\; x_4^2y_{10}c_1^2 - x_4^2c_1^7 - x_4x_8^2c_1
       + x_4x_8c_1^5 - x_4x_8c_1^2c_3 - x_4y_{10}c_1^4 - x_4c_1^9\nonumber\\
  && +\; x_4c_1^6c_3 + x_8^2c_3 + x_8c_1^7 - x_8c_1^4c_3 
          + y_{10}^2c_1 - y_{10}c_1^6 - c_1^{11} - c_1^8c_3,\nonumber
\end{eqnarray}

\bigskip
Rewriting $c_3^2$, $c_3y_{10}$ and $c_3h_{16}$ in  
the equality $c_3^2y_{10}^2 - (c_3y_{10})^2 = 0$  gives rise 
to the following relation.
\begin{eqnarray}
\label{eq:211}
 &c_1^{16}  =&
  -\;t^{12}x_4^2 + t^{10}x_4^3 + t^{10}x_4x_8 - t^9x_4y_{10}
              - t^8x_8^2 + t^7x_4^2y_{10} - t^7x_8y_{10}\\
&& +\;t^6h_{12}x_4^2 + t^6h_{16}x_4 + t^6x_4^3x_8 - t^6x_4x_8^2
              - t^6y_{10}^2 - t^5x_4x_8y_{10}\nonumber\\
&&
   -\;t^4h_{12}x_4x_8 + t^4h_{16}x_8 - t^4x_4^2x_8^2 - t^4x_4y_{10}^2
           - t^3h_{12}x_4y_{10} - t^3h_{16}y_{10}\nonumber\\
&&
   -\;t^3x_4^2x_8y_{10} - t^3x_8^2y_{10} + t^2h_{12}x_8^2 
   + t^2x_4x_8^3 - t^2x_8y_{10}^2 - th_{12}x_8y_{10}\nonumber\\
&& 
   -\;tx_4x_8^2y_{10} - ty_{10}^3 + h_{12}y_{10}^2 - h_{16}^2 
   - h_{16}x_8^2 + x_4x_8y_{10}^2 - x_8^4 - t^{10}x_4^2c_1^2\nonumber\\
&& 
   -\;t^8x_4^2c_1^4 + t^8x_4x_8c_1^2
              - t^7x_4y_{10}c_1^2 - t^6h_{12}x_4c_1^2 - t^6x_4^4c_1^2
              - t^6x_4^3c_1^4 \nonumber\\
&&
+\;t^6x_4^2x_8c_1^2 + t^6x_4^2c_1^6 - t^6x_4x_8c_1^4 + t^6x_4c_1^8 - t^6x_8^2c_1^2
              - t^5x_4y_{10}c_1^4 \nonumber\\
&& -\;t^5x_8y_{10}c_1^2
 + t^4h_{12}x_4c_1^4 - t^4h_{12}x_8c_1^2 - t^4h_{16}x_4c_1^2 + t^4x_4^3x_8c_1^2
             - t^4x_4^3c_1^6 \nonumber\\
&&+\; t^4x_4^2x_8c_1^4 
+ t^4x_4^2c_1^8 + t^4x_4x_8^2c_1^2 - t^4x_4x_8c_1^6 - t^4x_4c_1^{10}
              + t^4x_8c_1^8 - t^4y_{10}^2c_1^2 \nonumber\\
&&
-\; t^3h_{12}x_4x_8c_1 + t^3h_{12}y_{10}c_1^2 - t^3h_{16}x_4^2c_1 + t^3x_4^4c_1^5
              - t^3x_4^3x_8c_1^3 + t^3x_4^3y_{10}c_1^2 \nonumber\\
&&
+\; t^3x_4^3c_1 + t^3x_4^2x_8c_1^5 + t^3x_4^2y_{10}c_1^4 - t^3x_4x_8^2c_1^3
             - t^3x_4x_8y_{10}c_1^2 + t^3x_4x_8c_1^7 \nonumber\\
&&
+\; t^3x_4y_{10}^2c_1 - t^3x_4y_{10}c_1^6 + t^3x_8y_{10}c_1^4
              - t^3y_{10}c_1^8 + t^2h_{12}x_8c_1^4
             - t^2h_{16}x_8c_1^2 \nonumber\\
&&
-\; t^2x_4^2x_8^2c_1^2 - t^2x_4^2x_8c_1^6  - t^2x_4x_8^2c_1^4 + t^2x_4x_8c_1^8 + t^2x_8^2c_1^6
              - t^2x_8c_1^{10} \nonumber\\
&&-\; t^2y_{10}^2c_1^4 
- th_{12}x_8^2c_1 - th_{12}y_{10}c_1^4 - th_{16}x_4x_8c_1
              + th_{16}y_{10}c_1^2 + tx_4^3x_8c_1^5 \nonumber\\
&&-\; tx_4^2x_8^2c_1^3
              + tx_4^2x_8y_{10}c_1^2 + tx_4^2x_8c_1^7 + tx_4^2y_{10}c_1^6
              + tx_4x_8^2c_1^5 + tx_4x_8y_{10}c_1^4 \nonumber\\
&&-\; tx_4y_{10}c_1^8
              - tx_8^3c_1 + tx_8^2c_1^7 + tx_8y_{10}^2c_1
              - tx_8y_{10}c_1^6 + ty_{10}c_1^{10} - h_{12}^2c_1^4
              \nonumber\\
&&-\; h_{12}h_{16}c_1^2 
- h_{12}x_4^2c_1^6 - h_{12}x_4x_8c_1^4
              + h_{12}x_4c_1^8 - h_{12}x_8^2c_1^2 - h_{12}x_8y_{10}c_1\nonumber\\
&&             -\; h_{12}x_8c_1^6
              - h_{12}c_1^{10} 
           - h_{16}x_4^2c_1^4
              - h_{16}x_4y_{10}c_1 - h_{16}x_4c_1^6 + h_{16}x_8c_1^4
              + h_{16}c_1^8\nonumber\\
&&              -\; x_4^4c_1^8 + x_4^3x_8c_1^6
 + x_4^3y_{10}c_1^5
              - x_4^3c_1^{10} - x_4^2x_8y_{10}c_1^3 - x_4^2y_{10}^2c_1^2
              + x_4^2y_{10}c_1^7 \nonumber\\
&&              +\; x_4x_8^3c_1^2 + x_4x_8y_{10}c_1^5\
              - x_4c_1^{14} + x_8^3c_1^4 - x_8^2y_{10}c_1^3 
                         + x_8y_{10}^2c_1^2\nonumber\\
&&              +\; x_8y_{10}c_1^7 + x_8c_1^{12} 
        + y_{10}^3c_1 - y_{10}^2c_1^6.\nonumber
\end{eqnarray}

Rewrite $c_1^{16}$ in $P_1$ and the result $P_2$ has no $c_1$ and %
$P_2 \in  \ZZ_3[t,x_4,x_8,y_{10},h_{12},h_{16}]$.

The highest degree of \ $t$ \ is 27.
We use the relations obtained from (\ref{eq:114}), (\ref{eq:116})  
and  (\ref{eq:117}) to lower the degree of \;$t$ to 8\;:
\begin{eqnarray}
\label{eq:212}
&\hspace*{20pt} t^9 =& h_{18} + t^7x_4 - t^5x_8 + t^3h_{12} - t^3x_4x_8 
 	- t \ts h_{16} + t \ts x_8^2,\\
\label{eq:213}
& h_{18}x_8 =& {}- h_{16}y_{10} + y_{26}, \\
& h_{18}x_4 =& h_{12}y_{10} - y_{22}, \nonumber\\
& h_{16}x_4 =& {}- h_{12}x_8 + x_{20}, \nonumber\\
& y_{26}x_4 =& x_{20}y_{10} - y_{22}x_8\nonumber,\\
\label{eq:214}
& h_{18}x_{20} =& h_{12}y_{26} - h_{16}y_{22}.
\end{eqnarray}

\bigskip
     The result $P_3$ is as follows. 
\begin{equation}
\begin{array}{llll}
\label{eq:215}
&  P_3 = &
1 +h_{12} x_4 y_{26} -h_{12} x_4^2 x_8^2 +h_{12} x_4^2 y_{10} t^3 +h_{12} x_8 y_{22} +h_{12} x_{20} x_4 -h_{12} x_{20} y_{10} 
+h_{12}^2 x_4 x_8 
\\&&{}
-h_{12}^3 
-h_{16} h_{18} y_{10}^2 +h_{16} x_8 y_{10}^3 -h_{16} x_8^2 y_{10} t^3 +h_{16} x_8^4 +h_{16}^2 x_8^2 
+h_{16}^3 -h_{18} y_{10} y_{26} 
\\&&{}
+h_{18} y_{10}^3 t^3 -h_{18}^3
 -x_4 x_8 +x_4 x_8 y_{10}^3 +x_4 x_8 y_{10}^3 t^6 
+x_4 x_8 y_{10}^4 -x_4 x_8 y_{10}^4 t +x_4 x_8^2 y_{10} 
\\&&{}
-x_4 x_8^2 y_{10}^2 +x_4 x_8^2 y_{26} t +x_4 x_8^3 y_{10}^2 +x_4 x_8^4 
+x_4 x_8^4 t^6 -x_4 x_8^4 y_{10} t -x_4 y_{10} y_{22} +x_4 y_{10}^2 
\\&&{}
+x_4 y_{10}^3 -x_4 y_{26} -x_4^2 x_8 
+x_4^2 x_8 y_{10}^2 -x_4^2 x_8 y_{10}^3 +x_4^2 x_8 y_{10}^3 t^4 +x_4^2 x_8 y_{26} -x_4^2 x_8^2 y_{10} 
\\&&{}
+x_4^2 x_8^3 y_{10} t^3 +x_4^2 x_8^4 
+x_4^2 x_8^4 t^4 -x_4^2 y_{10} -x_4^2 y_{10}^2 -x_4^2 y_{10}^4 +x_4^2 y_{10}^4 t^3 +x_4^2 y_{22} -x_4^2 y_{22} t^3 
\\&&{}
+x_4^2 y_{26} -x_4^3 -x_4^3 x_8 t^8 -x_4^3 x_8 y_{10} t^3 -x_4^3 x_8^2 -x_4^3 x_8^2 t^4 -x_4^3 x_8^3 
-x_4^3 x_8^3 t^6 +x_4^3 y_{10} t^7 
\\&&{}
+x_4^3 y_{10}^2 -x_4^3 y_{10}^2 t^2 +x_4^3 y_{10}^3 -x_4^3 y_{10}^3 t^6 +x_4^3 y_{22} 
-x_4^3 y_{22} t -x_4^4 x_8 +x_4^4 x_8 t^6 +x_4^4 x_8 y_{10} 
\\&&{}
-x_4^4 x_8 y_{10} t -x_4^5 x_8 +x_4^5 x_8 t^4 
-x_4^5 y_{10} +x_4^5 y_{10} t^3 +x_4^6 -x_4^6 t^6 -x_8 y_{10} +x_8 y_{10}^2 
\\&&{}
-x_8 y_{10}^2 y_{26} 
-x_8 y_{10}^3 t^8 +x_8 y_{10}^4 -x_8 y_{10}^4 t^3 -x_8 y_{26} -x_8^2 -x_8^2 y_{10} y_{26} -x_8^2 y_{10}^2 
-x_8^2 y_{10}^3 
\\&&{}
-x_8^2 y_{10}^3 t^4 +x_8^2 y_{26} +x_8^2 y_{26} t^3 -x_8^3 +x_8^3 y_{10} +x_8^3 y_{10} t^7 
+x_8^3 y_{10}^2 -x_8^3 y_{10}^2 t^2 -x_8^4 t^8 
\\&&{}
-x_8^4 y_{10} t^3 -x_8^4 y_{10}^2 +x_8^5 -x_8^5 t^4 
-x_{20} x_4 +x_{20} x_4 x_8^2 +x_{20} x_4^2 -x_{20} x_4^2 x_8 -x_{20} x_4^3 
\\&&{}
+x_{20} x_4^3 t^2 -x_{20} x_8 
-x_{20} x_8 y_{10}^2 -x_{20} x_8^2 -x_{20} x_8^2 y_{10} t -x_{20} x_8^3 +x_{20} x_8^3 t^2 -x_{20} y_{10} 
\\&&{}
-x_{20} y_{10}^2 
-x_{20} y_{10}^3 +x_{20} y_{10}^3 t^2 -x_{20} y_{22} +x_{20} y_{26} -x_{20}^2 -y_{10} y_{26} +y_{10}^2 y_{22} 
-y_{10}^2 y_{26} 
\\&&{}
-y_{10}^3 +y_{10}^3 y_{22} -y_{10}^3 y_{22} t -y_{10}^4 +y_{10}^4 t^7 +y_{10}^5 
-y_{10}^5 t^2 +y_{22} y_{26} -y_{26}^2 
\end{array}
\end{equation}

Now we decompose $P_3$ by degree to obtain $\sigma_j^S$. 

The $\sigma_j^S$ for  $j \leq 17$  are as follows\ts:
\begin{equation}
\begin{array}{lll}
\label{eq:214}
& \sigma_i^S&= 0 \hskip40mm\mbox{for }\  1\leq i\leq 5  
                \   \mbox{ and }\  i=7,10,11,13,\cr
      & \sigma_6^S&= -x_4^3-x_4x_8,\cr
      & \sigma_8^S&= -x_4^2x_8-x_8^2,\cr
      & \sigma_9^S&= -x_4^2y_{10}-x_8y_{10},\cr
      & \sigma_{12}^S& = -x_{20}x_4+x_4^6-x_4^4x_8
                            +x_4y_{10}^2-x_8^3,\cr
      & \sigma_{14}^S& = x_{20}(x_4^2-x_8)-x_4^5x_8-x_4^3x_8^2
                         -x_4^2y_{10}^2+x_8y_{10}^2,\cr
      & \sigma_{15}^S& = x_{20}y_{10}+y_{22}(x_4^2+x_8)-x_4^5y_{10}
                       +x_4x_8^2y_{10}-y_{10}^3,\cr
      & \sigma_{16}^S &= -x_{20}x_4^3+x_4^3y_{10}^2,\cr
      & \sigma_{17}^S& = x_{20}x_4y_{10}+y_{22}(x_4^3-x_4x_8)-y_{26}x_8\cr
      &&\hskip40mm  +\;x_4^4x_8y_{10}-x_4^2x_8^2y_{10}+x_4y_{10}^3+x_8^3y_{10}.\cr
\end{array}
\end{equation}

     The element  $\sigma_{18}^S$  is expressed as
$$\begin{array}{llllllll}
\sigma_{18}^S =\; 
& -\; x_{20}x_4^2x_8-x_{20}x_8^2-y_{22}x_4y_{10}-y_{26}y_{10}
             -x_4^3x_8^3+x_4^2x_8y_{10}^2+x_4x_8^4\\
& -\;x_8^2y_{10}^2-h_{12}^3+h_{12}^2x_4x_8+h_{12}x_{20}x_4
             -h_{12}x_4^2x_8^2-t^8x_4^3x_8+t^7x_4^3y_{10}\\
& -\;t^6x_4^6+t^6x_4^4x_8+t^4x_4^5x_8-t^4x_4^3x_8^2
             +t^3h_{12}x_4^2y_{10}-t^3y_{22}x_4^2\\
& +\;t^3x_4^5y_{10}-t^3x_4^3x_8y_{10}+t^2x_{20}x_4^3-t^2x_4^3y_{10}^2
             -ty_{22}x_4^3-tx_4^4x_8y_{10}
\end{array}$$
which cannot be expressed in terms of the $R_1$-invariant elements 
already obtained. We put
\begin{eqnarray}
\label{eq:215}
&g_{24} =\;& -\;t^8x_8+t^7y_{10}-t^6x_4^3+t^6x_4x_8
                    +t^4x_4^2x_8-t^4x_8^2+t^3h_{18}\\
&& +\;t^3x_4^2y_{10}-t^3x_8y_{10}+t^2x_{20}
                    -t^2y_{10}^2-ty_{22}-tx_4x_8y_{10}\nonumber
\end{eqnarray}
and
\begin{equation}
\begin{array}{lll}
\label{eq:216}
&x_{36} =\;& -\;g_{24}x_4^3+h_{12}^3-h_{12}^2x_4x_8
                   -h_{12}x_{20}x_4+h_{12}x_4^2x_8^2.
\end{array}
\end{equation}
Then
$$\begin{array}{llllllll}
R_1(h_{12}^3-h_{12}^2x_4x_8-h_{12}x_{20}x_4+h_{12}x_4^2x_8^2)\cr
\mbox{}\hskip20mm  =\; (h_{12}^3-h_{12}^2x_4x_8
           -h_{12}x_{20}x_4+h_{12}x_4^2x_8^2)\cr
\mbox{}\hskip25mm
 +\;x_4^3(d_8^{\ts3}-d_8^{\ts2}x_8-d_8h_{16}+d_8x_8^2)
\end{array}$$
and
\begin{equation}
 R_1(g_{24}) 
            = g_{24}+d_8^{\ts3}-d_8^{\ts2}x_8-d_8h_{16}+d_8x_8^2
\label{eq:217}
\end{equation}
hold. The element  $x_{36}$ is shown to be $R_1$-invariant and we have
\begin{equation}
\begin{array}{lll}
\label{eq:218}
& \sigma_{18}^S = \;
         &-\; x_{36}+x_{20}(-x_4^2x_8-x_8^2)
                        -y_{22}x_4y_{10}-y_{26}y_{10}\cr
       & &\hskip30mm -\;x_4^3x_8^3+x_4^2x_8y_{10}^2
                        +x_4x_8^4-x_8^2y_{10}^2.\cr
\end{array}
\end{equation}
     For the sake of completeness, we list the result of $ \sigma_j^S$ for  $j>18$,
where $x_{48}$ and $x_{54}$ are the elements found in [MS] and will be studied 
in \S 4.

\bigskip
$\begin{array}{llllllll}
\hspace{1.2em}  
& \sigma_{20}^S & = & 
{}-x_{20}^2 +x_{20} x_4 x_8^2 +x_4^2 x_8^4 +x_8^5
-x_{20} y_{10}^2 -x_4 x_8^2 y_{10}^2 -y_{10}^4,\\
& \sigma_{21}^S & = & 
x_{20} x_4 x_8 y_{10} + x_4^3 y_{10}^3 +x_4 x_8 y_{10}^3
-x_{20} y_{22} -x_4 x_8^2 y_{22} +y_{10}^2 y_{22} +x_8^2 y_{26},\\
& \sigma_{22}^S & = & 
{}-x_{20} x_8^3 +x_8^3 y_{10}^2,\\
& \sigma_{23}^S & = & 
{}-x_4^2 x_8 y_{10}^3 -x_8^2 y_{10}^3 +x_{20} y_{26} -y_{10}^2 y_{26},\\
& \sigma_{24}^S & = & 
x_{48} -x_{20} x_8 y_{10}^2
+x_4 x_8^3 y_{10}^2 - x_4^2 y_{10}^4 + x_8 y_{10}^4 + y_{22} y_{26},\\
& \sigma_{25}^S & = & 
{}-x_{20} y_{10}^3 +y_{10}^5,\\
& \sigma_{26}^S & = & 
{}-x_8^4 y_{10}^2 + x_4x_8y_{10}^4 + y_{10}^3y_{22} 
- x_8^2 y_{10}y_{26} -y_{26}^2,\\
& \sigma_{27}^S & = & 
{}-x_{54}.
\end{array}$

\section{The invariant subalgebra  $H^*(BT;\ZZ_3)^{W(E_6)}$}

{\bf Notation} \quad 
Let $\Omega$ be a field extension over a field $k$. 
For elements $x_1$, \ldots , $x_n$ of $\Omega$, 
we denote an algebra generated by  $x_1$, \ldots , $x_n$ over $k$
by $k[x_1, \ldots , x_n]$ and its quotient field 
by $k(x_1, \ldots , x_n)$.
In this section, we note that we use 
the notations $k[x_1, \ldots , x_n]$ and $k(x_1, \ldots , x_n)$ by 
the above extended meaning.

For a field $k$ and  $\Omega$, we take $\ZZ_3$ and 
\ $\ZZ_3(t,\, t_1,\,  \ldots , t_6)$ respectively.
We put 
\ $K = \ZZ_3(x_4, x_8, y_{10}, x_{20}, y_{22}, x_{36})$ 
\ and
\ $L = \ZZ_3(t, x_4, x_8, y_{10}, h_{12}, h_{16})$.
We see $K \subset L$.

\begin{Lemma}\label{31}
{\sl We have} \ $L=K(t)$. 
{\sl The extension degree  $[L:K]$ is at most}  27.
\end{Lemma}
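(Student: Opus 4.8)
The plan is to prove the two assertions in turn: $L=K(t)$ by a short field‑theoretic elimination, and $[L:K]\le 27$ by exhibiting an explicit degree‑$27$ polynomial over $K$ having $t$ as a root. All computations take place inside $\Omega$; note that $x_4=p_1$ and $c_5=\sigma_5(t_1,\dots,t_5)$ are nonzero elements of $H^*(BT;\ZZ_3)=\ZZ_3[t,t_1,\dots,t_5]$, hence are nonzero, and in particular invertible, in $\Omega$.

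\textbf{Step 1: $L=K(t)$.} The inclusion $K\subseteq L$ is immediate from the constructions of Section~2: the formula for $h_{18}$ shows $h_{18}\in\ZZ_3[t,x_4,x_8,y_{10},h_{12},h_{16}]$, whence the formula for $g_{24}$ shows $g_{24}$ lies in the same ring, and then $x_{20}=h_{12}x_8+h_{16}x_4$, $y_{22}=h_{12}y_{10}-h_{18}x_4$ and the definition of $x_{36}$ place $x_{20},y_{22},x_{36}$ in $L$; since $t\in L$ as well, $K(t)\subseteq L$. For the reverse inclusion it suffices to show $h_{12},h_{16}\in K(t)$. From $x_{20}=h_{12}x_8+h_{16}x_4$ and $x_4\neq 0$ one gets $h_{16}=x_4^{-1}(x_{20}-x_8h_{12})$, so the problem reduces to $h_{12}$. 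Substituting this expression for $h_{16}$ into the defining formula for $h_{18}$ and then into $y_{22}=h_{12}y_{10}-h_{18}x_4$, and using $y_{10}+x_8t+x_4t^3=c_5$, a short computation gives
$$y_{22}=c_5\,h_{12}-tx_{20}+tx_4x_8^2-t^3x_4^2x_8-t^5x_4x_8+t^7x_4^2-t^9x_4 .$$
Since $c_5\neq 0$, this yields $h_{12}=c_5^{-1}N\in K(t)$ with $N\in\ZZ_3[x_4,x_8,x_{20},y_{22}][t]$ a polynomial of degree $9$ in $t$ whose leading coefficient is $\pm x_4$; then $h_{16}\in K(t)$ too, so $L\subseteq K(t)$, and hence $L=K(t)$.

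\textbf{Step 2: $[L:K]\le 27$.} As $L=K(t)$, it is enough to produce a nonzero polynomial in $K[X]$ of degree $\le 27$ vanishing at $t$. The relations already used give $h_{18}=x_4^{-1}(h_{12}y_{10}-y_{22})\in K(t)$, and then $g_{24}\in K(t)$; more precisely $g_{24}=(x_4c_5)^{-1}G$ for some $G\in K[t]$ of degree $\le 12$ in $t$. Now substitute these rational expressions for $g_{24}$ and $h_{12}$ into the defining identity $x_{36}=-g_{24}x_4^3+h_{12}^3-h_{12}^2x_4x_8-h_{12}x_{20}x_4+h_{12}x_4^2x_8^2$ and clear denominators by multiplying through by $c_5^3$. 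Using $h_{12}c_5=N$ and the characteristic‑$3$ identity $c_5^3=y_{10}^3+x_8^3t^3+x_4^3t^9$, the term $c_5^3h_{12}^3$ becomes $N^3$, which has degree $27$ in $t$ with leading coefficient $x_4^3$; meanwhile $c_5^3g_{24}x_4^3=Gc_5^2x_4^2$ has $t$‑degree $\le 18$, $c_5^3h_{12}^2x_4x_8=c_5N^2x_4x_8$ has $t$‑degree $21$, the two remaining $h_{12}$‑terms have $t$‑degree $\le 15$, and $c_5^3x_{36}$ has $t$‑degree $\le 9$. Thus the resulting polynomial identity is an equation in $t$ of degree exactly $27$ over $K$ with invertible leading coefficient $x_4^3$; after dividing by it we obtain a monic degree‑$27$ polynomial in $K[X]$ with root $t$, so $[L:K]=[K(t):K]\le 27$.

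\textbf{The main point.} The number $27$ is not accidental: $-t=x-w_1$ lies in the $W(E_6)$‑stable $27$‑element set $S$ of Section~3, so $t$ is a root of the degree‑$27$ ``characteristic polynomial'' $\prod_{s\in S}(X+s)=\sum_{j}\sigma_j^S X^{27-j}$ of that set; Step 2 merely produces such a relation concretely, using only the single relation (the definition of $x_{36}$) not yet exhausted in Step 1, thereby avoiding the need to know that every $\sigma_j^S$ lies in $K$. The only real obstacle is the bookkeeping in Step 2: one must verify that the substitutions do not cause cancellation in the top $t$‑degree, for which the key facts are that $N$ has $t$‑degree exactly $9$ with unit leading coefficient, that $g_{24}$ picks up only the factor $x_4c_5$ in its denominator, and that $N^3$ — by additivity of cubing in characteristic $3$ — has $t$‑degree exactly $27$.
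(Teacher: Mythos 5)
Your proof is correct and follows essentially the same route as the paper: the same elimination via $y_{22}=(y_{10}+tx_8+t^3x_4)h_{12}-\cdots$ to get $h_{12},h_{16}\in K(t)$, and the same use of the defining relation for $x_{36}$ multiplied by $(y_{10}+tx_8+t^3x_4)^3=c_5^3$ to produce a degree-$27$ equation for $t$ over $K$. The only difference is presentational: the paper displays the resulting degree-$27$ polynomial (3.5) explicitly (with leading term $-x_4^3t^{27}$), while you verify by degree bookkeeping that the leading coefficient is $x_4^3\neq 0$, which suffices.
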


\begin{proof}
From the definition, we see $K \subset L$ and $t \in L$.
It means $K(t)\subset L$, where $K(t)$ is 
a field generated by $K$ and $t$ in $L$.
To show $L \subset K(t)$, it is sufficient to show 
that $h_{12} \in K$ Because it implies that 
\ $h_{16}=(x_{20} - h_{12}x_8)/x_4 \in K(t)$.

Replacing $h_{18}$ by its defining expression (\ref{eq:114})
in $y_{22}=h_{12}y_{10} - h_{18}x_4$,   we obtain
\begin{equation}
y_{22} = (y_{10} + tx_8 + t^3x_4)h_{12} - t(x_{20} 
                    - x_4x_8^2) -  t^3x_4^2x_8  -  t^5x_4x_8  
                    +  t^7x_4^2  -  t^9x_4.
\label{eq:32}
\end{equation}
Hence
\begin{equation}
 (y_{10} + tx_8 + t^3 x_4)h_{12} 
    = y_{22} + t(x_{20} - x_4 x_8^2) + t^3 x_4^2 x_8 + t^5 x_4x_8 
      - t^7x_4^2 + t^9x_4.
\label{eq:33}
\end{equation}
Thus  $h_{12} \in K(t)$  and so does $h_{16}$.
We have shown that $L=K(t)$.

\bigskip
Now we shall find a equation of \,$t$ over $K$.
$$0 = x_{36} + g_{24}x_4^3 
 -  h_{12}^3 + h_{12}^2x_4x_8 + h_{12}x_{20}x_4 - h_{12}x_4^2x_8^2.
\leqno(3.16)'$$
Replacing  $g_{24}$  by its defining expression (\ref{eq:215}),
$h_{16}x_4$  by $x_{20} - h_{12}x_8$  and %
$h_{18}x_4$  by  $ - y_{22} + h_{12}y_{10}$,   
we obtain a polynomial in
$t$, $x_4$, $x_8$, $y_{10}$, $x_{20}$, $y_{22}$, $x_{36}$ and $h_{12}$.
Since its degree with respect to  $h_{12}$  is  3,   
we multiply (\ref{eq:216})%
$'$ by $(y_{10} + tx_8 + t^3x_4)^3$.  Then
by making use of (\ref{eq:33})
we obtain a polynomial in  
$\ZZ_3[\ts t, x_4, x_8, y_{10}, x_{20}, y_{22}, x_{36}]$  of degree  27  
with respect to  $t$\,:
\begin{equation}
\begin{array}{lll}
	\label{eq:35}
& 0=&x_{36}y_{10}^3
+x_{20}y_{22}x_4y_{10}^2
+y_{22}^2x_4x_8y_{10}
-y_{22}^3
-y_{22}x_4^2x_8^2y_{10}^2
+tx_4^3x_8^4y_{10}^2
\\ & & {}
-tx_4^4x_8y_{10}^4
+tx_{20}^2x_4y_{10}^2
+tx_{20}x_4^2x_8^2y_{10}^2
+tx_{20}y_{22}x_4x_8y_{10}
+ty_{22}^2x_4x_8^2
\\ & & {}
-ty_{22}x_4^2x_8^3y_{10}
-ty_{22}x_4^3y_{10}^3
-t^2x_4^3y_{10}^5
+t^2x_{20}x_4^3y_{10}^3
-t^3x_4^3x_8y_{10}^4
\\ & & {}
-t^3x_4^4x_8^3y_{10}^2
+t^3x_4^5y_{10}^4
-t^3x_{20}^2x_4x_8^2
-t^3x_{20}^3
-t^3x_{20}x_4^2x_8^4
+t^3x_{20}x_4^3x_8y_{10}^2
\\ & & {}
-t^3x_{20}y_{22}x_4^2y_{10}
+t^3x_{36}x_8^3
+t^3y_{22}^2x_4^2x_8
+t^4x_4^3x_8^2y_{10}^3
+t^4x_4^5x_8y_{10}^3
\\ & & {}
-t^4x_{20}^2x_4^2y_{10}
+t^4x_{20}x_4^2y_{10}^3
+t^4x_{20}y_{22}x_4^2x_8
-t^4y_{22}x_4^2x_8y_{10}^2
-t^5x_4^3x_8^3y_{10}^2
\\ & & {}
+t^5x_{20}x_4^3x_8^3
-t^6x_4^4x_8y_{10}^3
-t^6x_4^6y_{10}^3
-t^6x_{20}x_4^2x_8^2y_{10}
-t^6x_{20}x_4^4x_8y_{10}
\\ & & {}
+t^6x_{20}y_{22}x_4^3
+t^6y_{22}x_4^2x_8^3
-t^6y_{22}x_4^3y_{10}^2
+t^6y_{22}x_4^4x_8^2
-t^7x_4^3x_8^5
+t^7x_4^3y_{10}^4
\\ & & {}
+t^7x_4^4x_8^2y_{10}^2
-t^7x_4^5x_8^4
+t^7x_{20}^2x_4^3
+t^7x_{20}x_4^3y_{10}^2
-t^7x_{20}x_4^4x_8^2
+t^9x_4^3x_8^2y_{10}^2
\\ & & {}
-t^9x_4^4x_8^4
-t^9x_4^5x_8y_{10}^2
+t^9x_4^6x_8^3
+t^9x_{20}x_4^2y_{10}^2
+t^9x_{20}x_4^3x_8^2
\\ & & {}
+t^9x_{20}x_4^5x_8
+t^9x_{36}x_4^3
-t^9y_{22}x_4^2x_8y_{10}
+t^9y_{22}x_4^4y_{10}
-t^{10}x_4^3x_8^3y_{10}
\\ & & {}
-t^{10}x_4^4y_{10}^3
+t^{10}x_4^5x_8^2y_{10}
-t^{10}x_4^7x_8y_{10}
+t^{10}x_{20}x_4^2x_8y_{10}
-t^{10}x_{20}x_4^4y_{10}
\\ & & {}
-t^{10}y_{22}x_4^2x_8^2
+t^{10}y_{22}x_4^4x_8
-t^{10}y_{22}x_4^6
-t^{11}x_4^6y_{10}^2
+t^{11}x_{20}x_4^6
+t^{12}x_4^3y_{10}^3
\\ & & {}
-t^{12}x_4^4x_8^2y_{10}
+t^{12}x_4^8y_{10}
-t^{12}x_{20}x_4^3y_{10}
-t^{12}y_{22}x_4^3x_8
-t^{12}y_{22}x_4^5
\\ & & {}
-t^{13}x_4^3x_8y_{10}^2
+t^{13}x_4^5y_{10}^2
+t^{13}x_4^6x_8^2
+t^{13}x_4^8x_8
+t^{13}x_{20}x_4^3x_8
-t^{13}x_{20}x_4^5
\\ & & {}
+t^{15}x_4^3x_8^3
-t^{15}x_4^4y_{10}^2
+t^{15}x_4^7x_8
-t^{15}x_4^9
+t^{15}x_{20}x_4^4
+t^{18}x_4^3x_8y_{10}
\\ & & {}
+t^{18}x_4^5y_{10}
+t^{19}x_4^3x_8^2
+t^{19}x_4^5x_8
+t^{21}x_4^4x_8
+t^{21}x_4^6
-t^{27}x_4^3.
\end{array}
\end{equation}

Hence we conclude $[L:K]=[K(t):K]\le 27$.
\end{proof}
\begin{Corollary}\label{32}
{\sl The elements $x_4$, $x_8$, $y_{10}$, $x_{20}$, $y_{22}$, $x_{36}$ are
algebraically independent over $\ZZ_3$.}
\end{Corollary}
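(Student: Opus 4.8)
The plan is to read off the corollary from Lemma~\ref{31} by a transcendence-degree argument. Since $K = \ZZ_3(x_4, x_8, y_{10}, x_{20}, y_{22}, x_{36})$ is generated over $\ZZ_3$ by \emph{six} elements, and a generating set of a field extension always contains a transcendence basis, it is enough to prove that $\trdeg_{\ZZ_3} K = 6$: then that six-element generating set is forced to be a transcendence basis, i.e. the six elements are algebraically independent over $\ZZ_3$. By Lemma~\ref{31}, $L = K(t)$ with $[L:K]\le 27<\infty$, so $L/K$ is algebraic and hence, by additivity of transcendence degree in the tower $\ZZ_3\subseteq K\subseteq L$, one has $\trdeg_{\ZZ_3} K = \trdeg_{\ZZ_3} L$. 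So everything comes down to checking that $\trdeg_{\ZZ_3} L = 6$.

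For that, I would use the fact from Section~2 that $\ZZ_3[\ts t, x_4, x_8, y_{10}, h_{12}, h_{16}] = \ZZ_3[\ts t, p_1, p_2, c_5, p_3, p_4]$, so that $L$ is the field of fractions of $A := \ZZ_3[\ts t, p_1, p_2, c_5, p_3, p_4] = H^*(BT\ts;\ZZ_3)^{W(U)}$ (cf. (\ref{eq:15})). Now $H^*(BT\ts;\ZZ_3) = \ZZ_3[\ts t, t_1, \ldots, t_5]$ is integral over $A$: each $t_j$ is a root of $X^2 - t_j^2$, and each $t_j^2$ is a root of the monic polynomial $\prod_{i=1}^5 (X - t_i^2)$, whose coefficients are (up to sign) the elementary symmetric functions $p_1, \ldots, p_5$ of the $t_i^2$, and $p_5 = c_5^2\in A$. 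Being also finitely generated as an algebra over $A$, it is then a finite $A$-module, so $A$ and $\ZZ_3[\ts t, t_1, \ldots, t_5]$ have the same Krull dimension, namely $6$; hence $\trdeg_{\ZZ_3} L = \dim A = 6$. Combined with the previous paragraph this gives $\trdeg_{\ZZ_3} K = 6$ and completes the proof.

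I do not expect a real obstacle here: all the substantive computation — in particular the degree-$27$ relation (\ref{eq:35}) for $t$ over $K$, whose leading coefficient $-x_4^3$ is nonzero because $x_4 = p_1\neq 0$ — has already been carried out in Lemma~\ref{31}. The only mild point to watch is the claim $\trdeg_{\ZZ_3} L = 6$; if one prefers, one can see it even more directly from $L\subseteq \ZZ_3(t, t_1, \ldots, t_5)$ (which bounds the transcendence degree above by $6$) together with the integrality statement above (which forces $\ZZ_3(t, t_1, \ldots, t_5)$ to be algebraic over $L$, bounding it below by $6$).
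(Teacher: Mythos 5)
Your proposal is correct and takes essentially the same route as the paper: Lemma~\ref{31} makes $L=K(t)$ algebraic over $K$, so $\trdeg_{\sZZ_3}K=\trdeg_{\sZZ_3}L=6$, and the six listed generators of $K$ must then form a transcendence basis. The only difference is that you spell out the verification that $\trdeg_{\sZZ_3}L=6$ (via integrality of $\ZZ_3[\ts t,t_1,\ldots,t_5]$ over $\ZZ_3[\ts t,p_1,p_2,c_5,p_3,p_4]$, or via the inclusion $L\subset\ZZ_3(t,t_1,\ldots,t_5)$), a fact the paper simply asserts.
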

\begin{proof}
Since $L$ is an algebraic extension of $K$ and 
the transcendental degree $\trdeg_{\ZZ_3} L$ of $L$ is 6, 
we have $\trdeg_{\ZZ_3} K=6$. Furthermore $K$ is generated 
by the 6 elements $x_4$, $x_8$, $y_{10}$, $x_{20}$, $y_{22}$, $x_{36}$ 
\ over $\ZZ_3$. So we get Corollary \ref{32}.
\end{proof}

\begin{Lemma}\label{33}
$\Big[\, L : \ZZ_3(t, t_1, t_2, t_3, t_4, t_5)^{W(E_6)} \,\Big]=27$.
\end{Lemma}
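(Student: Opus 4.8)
The plan is to carry everything out inside the field $\Omega=\ZZ_3(t,t_1,\dots,t_5)$, on which $W(E_6)$ and its subgroup $W(U)$ act, and to reduce the statement to the elementary index identity $[W(E_6):W(U)]=27$ via the Galois correspondence, after identifying $L$ with the fixed field $\Omega^{W(U)}$.

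First I would verify that the $W(E_6)$-action on $\Omega$ is faithful. Since $\Omega$ is the fraction field of the polynomial ring $H^*(BT;\ZZ_3)=\ZZ_3[t,t_1,\dots,t_5]$ and $W(E_6)$ acts on it through its linear action on the degree-two part $V=H^2(BT;\ZZ_3)=H^2(BT;\ZZ)\otimes\ZZ_3$, it suffices to see that this linear action is faithful. The reflection representation of $W(E_6)$ on $H^2(BT;\ZZ)\cong\ZZ^6$ is faithful by definition, and by Minkowski's theorem the kernel of $GL_6(\ZZ)\to GL_6(\ZZ_3)$ is torsion-free; since $W(E_6)$ is finite, its intersection with that kernel is trivial, so the mod-$3$ action is faithful. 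By Artin's theorem it then follows that $\Omega/\Omega^{W(E_6)}$ is Galois with group $W(E_6)$, hence $[\Omega:\Omega^{W(E_6)}]=|W(E_6)|=2^7\cdot 3^4\cdot 5=51840$. The same argument (or the elementary remark that an even signed permutation of the $t_i$ fixing each of them is the identity, as $-1\neq 1$ in $\ZZ_3$) shows $W(U)=W(D_5)$ acts faithfully on $\Omega$, so $[\Omega:\Omega^{W(U)}]=|W(D_5)|=1920$.

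Next I would identify $L$ with $\Omega^{W(U)}$. By (\ref{eq:15}) the ring of $W(U)$-invariants is $\ZZ_3[t,p_1,p_2,c_5,p_3,p_4]=\ZZ_3[t,x_4,x_8,y_{10},h_{12},h_{16}]$, whose fraction field is $L$ by definition; and for any finite group $G$ acting on a domain $R$ one has $\mathrm{Frac}(R)^G=\mathrm{Frac}(R^G)$: given $a/b\in\mathrm{Frac}(R)^G$, multiply numerator and denominator by $\prod_{g\neq e}g(b)$ so that the denominator lies in $R^G$, and then the numerator lies in $\mathrm{Frac}(R)^G\cap R=R^G$. Applying this with $R=H^*(BT;\ZZ_3)$ and $G=W(U)$ gives $\Omega^{W(U)}=L$. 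Since $W(U)\le W(E_6)$ is the subgroup generated by $R_2,\dots,R_6$, we also obtain $\Omega^{W(E_6)}\subseteq\Omega^{W(U)}=L$, so the extension degree in the lemma is meaningful.

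It then remains only to combine these facts: in the Galois correspondence for $\Omega/\Omega^{W(E_6)}$ the intermediate field $L=\Omega^{W(U)}$ corresponds to the subgroup $W(U)$, whence
$$[L:\Omega^{W(E_6)}]=[W(E_6):W(U)]=\frac{|W(E_6)|}{|W(D_5)|}=\frac{51840}{1920}=27$$
(equivalently, $W(U)$ is the isotropy group of $\beta_1=t$, whose $W(E_6)$-orbit consists of the $27$ weights of the $27$-dimensional representation of $E_6$). The one genuine difficulty is the faithfulness used in the first step: without it, Artin gives only $[\Omega:\Omega^{W(E_6)}]\le|W(E_6)|$, and Lemma \ref{31} together with $K\subseteq\Omega^{W(E_6)}\subseteq L$ then yields only the upper bound $[L:\Omega^{W(E_6)}]\le[L:K]\le 27$, leaving the matching lower bound unproved. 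Minkowski's theorem disposes of this at once; alternatively one can argue that the only proper nontrivial normal subgroup of $W(E_6)$ is its simple derived subgroup of index two, and that neither it nor any reflection lies in the kernel of the mod-$3$ reduction.
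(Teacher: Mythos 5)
Your proposal is correct, and it follows the same overall route as the paper: identify $L$ with the $W(U)=W(D_5)$-fixed subfield of $\Omega=\ZZ_3(t,t_1,\dots,t_5)$, check that Galois theory applies, and read off the degree as the index of Weyl groups, $|W(E_6)|/|W(D_5)|=2^7\cdot3^4\cdot5/(2^7\cdot3\cdot5)=27$. The one place where you genuinely diverge is the crucial faithfulness step, which you correctly single out as the real content. The paper argues group-theoretically: the kernel $H$ of the action on $H^*(BT;\ZZ_3)$ is normal in $W(E_6)$, which by Chevalley contains the simple group $SU_4(2)$ with index $2$, so $H$ would have to be $SU_4(2)$ or trivial, and the first case is ruled out because the explicit action described in Section 2 is not through a group of order $2$ (the paper also remarks this can instead be extracted from Lemma 10.7.1 of \cite{LS}). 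You instead use Minkowski's theorem that the congruence kernel of $GL_6(\ZZ)\to GL_6(\ZZ_3)$ is torsion-free, so the faithful integral reflection representation stays faithful mod $3$; this is cleaner, avoids any knowledge of the subgroup structure of $W(E_6)$, and works verbatim for any Weyl group and any prime $p\ge 3$. You also supply a justification the paper leaves implicit, namely that $L$ really is the fixed field $\Omega^{W(U)}$, by combining (\ref{eq:15}) with the standard fact that taking invariants commutes with passing to fraction fields for a finite group acting on a domain; the paper simply asserts $L=\ZZ_3(t,t_1,\dots,t_5)^{W(Spin(10))}$. Both arguments yield the same count, and your closing observation that $W(U)$ is the stabilizer of the weight $t=\beta_1$ with $27$-element orbit gives a pleasant independent check of the index.
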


\begin{proof}
We have the inclusions of fields

\medskip
\mbox{}\hskip6em $
\ZZ_3(t, t_1, t_2, t_3, t_4, t_5)^{W(E_6)}
\subset \ZZ_3(t, t_1, t_2, t_3, t_4, t_5)^{W(Spin(10))}\\
\mbox{}\hskip10em 
= \ZZ_3(t, x_4, x_8, y_{10}, h_{12}, h_{16}) = L 
\subset \ZZ_3(t, t_1, t_2, t_3, t_4, t_5)
$

In order to apply the Galois theory to our case, we need to show 
that $W(E_6)$ acts on $H^*(BT;\ZZ_3)$ as automorphisms, 
where $T$ is the maximal torus of $E_6$ and the quotient field 
of $H^*(BT;\ZZ_3)$ is $\ZZ_3(t, t_1, t_2, t_3, t_4, t_5)$.

When we define $ H=\{g \in W(E_6) 
                        \mid gx=x, \ \forall x \in H^*(BT;\ZZ_3) \} $,
$H$ is a normal subgroup of $W(E_6)$.
According to \cite{C}, the Weyl group $W(E_6)$ contains 
the simple group $SU_4(2)$ with index 2.
Hence $H$ is $SU_4(2)$ or $\{e\}$.
Suppose $H=SU_4(2)$, then $W(E_6)$ acts on $H^*(BT;\ZZ_3)$ as 
an automorphism with the order 2. This is a contradiction from 
the argument of \S1.
Thus we can apply the Galois theory in our context.

\medskip\noindent
{\bf Remark.}
This fact, however, can be checked by appealing to Lemma 10.7.1
of \cite{LS}.
Thus we can apply Galois theory in our context
(thanks to Proposition 1.2.4 of \cite{LS}).

     As is well known  \cite{F},  the order of  $W(E_6)$ 
is  $2^7\cdot 3^4 \cdot 5$  \medskip and
that of  $W(Spin(10))$ is  $2^7\cdot 3 \cdot 5$.   
Noting that $L = K(t, t_1, t_2, t_3, t_4, t_5)^{W(Spin(10))}$,
we have
$$\Big[L : \ZZ_3(t, t_1, t_2, t_3, t_4, t_5)^{W(E_6)} \Big]
        = \Frac{\vphantom{\Big(} | \,W(E_6) \,|}
               {\vphantom{\Big(} | \,W(Spin(10)) \,|} 
        = 3^3.$$
\end{proof}

\begin{Lemma}\label{34}
$K = \ZZ_3(t, t_1, t_2, t_3, t_4, t_5)^{W(E_6)} $.
\end{Lemma}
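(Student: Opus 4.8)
The plan is a pure degree count inside the tower of fields
$$K \;\subset\; \ZZ_3(t, t_1, t_2, t_3, t_4, t_5)^{W(E_6)} \;\subset\; L ,$$
so the first thing I would do is justify the two inclusions. The left-hand inclusion holds because each of the six generators $x_4$, $x_8$, $y_{10}$, $x_{20}$, $y_{22}$, $x_{36}$ of $K$ was exhibited in Sections 2 and 3 as a $W(E_6)$-invariant element of $H^*(BT;\ZZ_3)\subset \ZZ_3(t,t_1,\ldots,t_5)$; hence the subfield they generate over $\ZZ_3$ lies in the invariant field. The right-hand inclusion is exactly the chain already displayed in the proof of Lemma \ref{33}: since $W(Spin(10))\subset W(E_6)$, passing to invariants reverses the containment, and $\ZZ_3(t,t_1,\ldots,t_5)^{W(Spin(10))}=\ZZ_3(t,x_4,x_8,y_{10},h_{12},h_{16})=L$ by \eqref{eq:15} together with the change of generators from $p_1,p_2,c_5,p_3,p_4$ to $x_4,x_8,y_{10},h_{12},h_{16}$.

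With the tower established, multiplicativity of field degrees gives
$$[L:K] \;=\; \Big[\, L : \ZZ_3(t,t_1,\ldots,t_5)^{W(E_6)} \,\Big]\cdot \Big[\, \ZZ_3(t,t_1,\ldots,t_5)^{W(E_6)} : K \,\Big].$$
By Lemma \ref{33} the first factor on the right equals $27$, and by Lemma \ref{31} the left-hand side is at most $27$. Hence the second factor must be $1$, i.e. $\ZZ_3(t,t_1,\ldots,t_5)^{W(E_6)}=K$, which is the assertion. As a by-product one also obtains $[L:K]=27$ exactly, sharpening the bound of Lemma \ref{31}.

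There is no analytic difficulty in this argument; the only points requiring care are the two inclusions just mentioned. The invariance of the generators of $K$ is genuinely needed here, so I would make sure to cite the explicit verifications — in particular the computation of $R_1(x_{36})$ in Section 3 — and the identification of $L$ with the $W(Spin(10))$-invariants relies on \eqref{eq:15} and on the fact, used implicitly throughout, that $x_4,x_8,y_{10},h_{12},h_{16}$ are algebraically independent polynomial generators. Once these are in place the lemma follows immediately from Lemmas \ref{31} and \ref{33}, and no further computation is involved.
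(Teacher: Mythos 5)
Your proof is correct and follows essentially the same route as the paper: sandwich the invariant field between $K$ and $L=K(t)$, then combine the bound $[L:K]\le 27$ from Lemma \ref{31} with the exact degree $[L:\ZZ_3(t,t_1,\ldots,t_5)^{W(E_6)}]=27$ from Lemma \ref{33} to force the intermediate degree to be $1$. Your explicit justification of the two inclusions (invariance of the six generators of $K$, and containment in the $W(Spin(10))$-invariants) is exactly what the paper uses, stated only slightly more tersely there.
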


\begin{proof}
From Lemma \ref{31},
we have
$$ K \subset \ZZ_3(t, t_1, t_2, t_3, t_4, t_5)^{W(E_6)} 
\subset L = K(t). $$
Hence 
$\Big[\, L : \ZZ_3(t, t_1, t_2, t_3, t_4, t_5)^{W(E_6)} \,\Big] $ %
divides $[L : K]$. Further we have 
\[
[L : K] 
\le \Big[\, L : \ZZ_3(t, t_1, t_2, t_3, t_4, t_5)^{W(E_6)}\, \Big]
= 27\]
\ from Lemmas \ref{31} and \ref{33}.
It follows that 
\ $K = \ZZ_3(t, t_1, t_2, t_3, t_4, t_5)^{W(E_6)} $.
\end{proof}

To state the next lemma, we use the following notation:\\
Let $A$ be a ring and $S$ be a set.
Then denote by $A\{s\}$ a free $A$-module $
\mathop{\oplus}\limits_{s \in S}A\{s\}$
with a basis $S$. we set
$$\begin{array}{ll}
  H        &= \ZZ_3[\ts t, x_4, x_8, y_{10}, h_{12}, h_{16}]\ts, \\
  M        &= \ZZ_3[x_4, x_8, y_{10}, x_{20}, y_{22}, x_{36}]\ts,\\
 \wtilde M &= \ZZ_3[x_4,x_4^{-1}][x_8, y_{10}, x_{20}, y_{22}, x_{36}]\ts.
\end{array}$$

\begin{Lemma}\label{35}
$(1)$ \enskip
$L=K\{\,t_{}^ih_{12}^j \mid \ 0 \le i \le 8, \ 0 \le j \le 2 \}$.
In particular $t_{}^ih_{12}^j$, $0 \le i \le 8$, $0 \le j \le 2$ is
linearly independent over $M$ and $\wtilde M$.

$(2)$ \enskip $H \subset 
\wtilde M \{\,t_{}^ih_{12}^j \mid \ 0 \le i \le 8, \ 0 \le j \le 2 \}$.
\end{Lemma}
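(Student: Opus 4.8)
The plan is to treat (1) as a consequence of the field-theoretic results already in hand and (2) as an explicit rewriting argument using the relations of Sections 2 and 3. For part (1): by Lemma \ref{31} we have $L = K(t)$ and $[L:K] \le 27$, while by Lemmas \ref{33} and \ref{34} we have $[L:K] = 27$. Thus $t$ satisfies over $K$ an irreducible polynomial of degree exactly $27$ — indeed the degree-$27$ polynomial (\ref{eq:35}) must be, up to a unit in $K$, the minimal polynomial of $t$. Hence $\{1, t, t^2, \ldots, t^{26}\}$ is a $K$-basis of $L$. The first step is therefore to re-index this basis: writing $i = q\cdot 9 + r$ is not quite what we want; instead I would show that $\{t^i h_{12}^j \mid 0 \le i \le 8,\ 0 \le j \le 2\}$ is also a $K$-basis. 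Since this set has $27$ elements and $L$ has $K$-dimension $27$, it suffices to prove it spans $L$ over $K$, or equivalently that it is linearly independent over $K$. The key input is equation (\ref{eq:33}), which reads $(y_{10} + t x_8 + t^3 x_4) h_{12} = y_{22} + t(x_{20} - x_4 x_8^2) + t^3 x_4^2 x_8 + t^5 x_4 x_8 - t^7 x_4^2 + t^9 x_4$; since $y_{10}+tx_8+t^3x_4 \ne 0$ in $L$, this exhibits $h_{12}$ as a rational function of $t$ with coefficients in $K$, so $K(t, h_{12}) = K(t) = L$. Running the relation the other way, $t^9$ (hence every higher power of $t$) can be reduced modulo the ideal generated by (\ref{eq:212}) and (\ref{eq:35}); combining this with (\ref{eq:33}) lets me express any $t^i h_{12}^j$ with $i \ge 9$ or $j \ge 3$ as a $K$-combination of the proposed $27$ monomials, proving spanning, and a dimension count then gives linear independence over $K$. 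Linear independence over $M$ and over $\widetilde M$ follows a fortiori, since $M \subset \widetilde M \subset K$.

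For part (2), I would argue that $H = \ZZ_3[t, x_4, x_8, y_{10}, h_{12}, h_{16}]$ sits inside $\widetilde M\{t^i h_{12}^j \mid 0 \le i \le 8,\ 0 \le j \le 2\}$ generator by generator. The elements $t$, $x_4$, $x_8$, $y_{10}$ are trivially of this form. For $h_{16}$, the relation $h_{16} x_4 = -h_{12} x_8 + x_{20}$ (listed among (\ref{eq:213})) gives $h_{16} = x_4^{-1}(x_{20} - x_8 h_{12}) \in \widetilde M\{1, h_{12}\}$. It remains to check that the subring of $L$ generated by $t, x_4, x_8, y_{10}, h_{12}, h_{16}$ is \emph{closed} under multiplication inside $\widetilde M\{t^i h_{12}^j\}$: concretely, I must verify that $t^9$, $t h_{12}$-type overflow terms, and $h_{12}^3$ all lie back in $\widetilde M\{t^i h_{12}^j\}$. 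The overflow of $t$ is handled by (\ref{eq:212}), which expresses $t^9$ in terms of lower powers of $t$, $h_{12}$, $h_{16}$, $h_{18}$, and the $x_\bullet$; here $h_{18}$ must in turn be eliminated using $h_{18} x_4 = -y_{22} + h_{12} y_{10}$ (from (\ref{eq:213})), giving $h_{18} \in \widetilde M\{1, h_{12}\}$, so $t^9 \in \widetilde M\{t^i h_{12}^j \mid 0\le i\le 8,\ 0\le j\le 2\}$. The overflow of $h_{12}$ is handled by multiplying the defining relation (\ref{eq:216})$'$ for $x_{36}$ through by $x_4^3$ and using $h_{16}x_4$, $h_{18}x_4$ as above: this expresses $h_{12}^3$ (after clearing $x_4$, which is invertible in $\widetilde M$) as a $\widetilde M$-combination of $1, h_{12}, h_{12}^2$ times powers of $t$ up to degree $8$. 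Iterating these two reductions terminates because each application strictly decreases the pair (degree in $t$, degree in $h_{12}$) in a suitable monomial order, so every element of $H$ reduces to normal form.

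The main obstacle I anticipate is purely bookkeeping: showing that the reduction process in (2) actually terminates and stays inside $\widetilde M\{t^i h_{12}^j\}$ rather than spilling into negative or unbounded exponents. The delicate point is that reducing $t^9$ via (\ref{eq:212}) reintroduces $h_{18}$, and reducing $h_{18}$ via (\ref{eq:213}) reintroduces $h_{12}$; one must confirm this does not create a cycle. It does not, because the $h_{18}$-elimination lowers the $t$-degree bound without raising it back (the right-hand sides of (\ref{eq:213}) have no $t$), and the $h_{12}^3$-elimination lowers the $h_{12}$-degree at the cost of only bounded $t$-degree and $\widetilde M$-coefficients; so a lexicographic termination argument in the pair $(\deg_t, \deg_{h_{12}})$ closes the loop. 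A secondary subtlety is that all of this takes place over $\widetilde M = \ZZ_3[x_4, x_4^{-1}][x_8, y_{10}, x_{20}, y_{22}, x_{36}]$ rather than over $M$, which is exactly why we invert $x_4$: every elimination step above divides by $x_4$, and only in $\widetilde M$ is that legitimate. Once termination and closure are established, part (2) follows, and combined with the linear independence from part (1) this gives the stated containment with the $27$ monomials forming an honest $\widetilde M$-spanning set for $H$.
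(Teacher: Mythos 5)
Your plan follows essentially the same route as the paper: part (1) via $[L:K]=27$ (Lemmas \ref{31}, \ref{33}, \ref{34}) together with the $t^9$-relation (\ref{eq:33}) and the $h_{12}^3$-relation coming from (\ref{eq:215})--(\ref{eq:216}), and part (2) by eliminating $h_{16}$, $h_{18}$ linearly in $h_{12}$ over $\wtilde M$ (dividing by $x_4$, which is exactly why $\wtilde M$ is used) and then reducing the overflows $t^9$ and $h_{12}^3$ — this is precisely what the paper does with (\ref{eq:33})$'$ and (\ref{eq:36}).

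The one step that does not work as you state it is the termination argument you yourself single out as the main obstacle. No lexicographic order on the pair $(\deg_t,\deg_{h_{12}})$ is decreasing for both rewriting rules: the $t^9$-rule (\ref{eq:33})$'$ replaces $t^9$ by terms such as $t^3h_{12}$, so it can raise $\deg_{h_{12}}$ while lowering $\deg_t$, whereas the $h_{12}^3$-rule (\ref{eq:36}) replaces $h_{12}^3$ by terms such as $t^8x_4^3x_8$, so it can raise $\deg_t$ by as much as $8$ while lowering $\deg_{h_{12}}$; with $\deg_t$ dominant the second rule increases the pair, with $\deg_{h_{12}}$ dominant the first one does, so the "lexicographic" loop-closing claim fails as written (and the worry is real: reducing $t^ih_{12}^3$ produces $t$-powers up to $16$, which re-enter the $t^9$-rule and recreate higher $h_{12}$-powers). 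The repair is easy: use a weighted degree, e.g.\ $w(t^ih_{12}^j)=i+3j$; then every monomial produced by (\ref{eq:33})$'$ has weight at most $7<9$ (the $h_{12}$-terms have $t$-degree at most $3$, giving weight at most $6$), and every monomial on the right of (\ref{eq:36}) has weight at most $8<9$, so both rules strictly decrease $w$ and the reduction terminates with all exponents in the range $0\le i\le 8$, $0\le j\le 2$. (The paper avoids stating an order by tracking the specific overflow powers, noting $t^{10},t^{12}\in\wtilde M\{t^i,h_{12}t^i\mid 0\le i\le 8\}$.) The same weighted-order remark is also needed in your part (1), since reducing $t^k$ for $k$ up to $26$ over $K$ already produces $h_{12}^3$-terms; with that adjustment both parts of your plan go through and agree with the paper's proof.
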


\begin{proof}
(1) \enskip Since $K(t)$ is an algebraic extension 
of $K$ and $h_{12} \in L=K(t)$, we see 
$$L = K(t) = K[t] = K[h_{12},t].$$

From the relation 
$$ t^9x_4 = t^7x_4^2 - t^5x_4x_8 + t^3h_{12}x_4 - t^3x_4^2x_8
            + th_{12}x_8 - tx_{20}+ tx_4x_8^2 + h_{12}y_{10} - y_{22},
\leqno(\ref{eq:33})'$$
we obtain $L = K[h_{12}]\{\, t^i \mid 0 \le i \le 8 \}$. At the same time,
the formula (\ref{eq:33})$'$
asserts that $t$ is 
integral over $\wtilde M[h_{12}]$.

From (\ref{eq:215}) and (\ref{eq:216}),
we have $h_{12}^3 \in 
K\{\, t_{}^ih_{12}^j \mid \ 0 \le i \le 8, \ 0 \le j \le 2 \}$ and
hence $L = K[h_{12}]\{\, t^i \mid 0 \le i \le 8 \} 
= K\{\, t_{}^ih_{12}^j \mid \ 0 \le i \le 8, \ 0 \le j \le 2 \}$.

On the other hand, this means that $\dim_K L \le 27$.

From Lemmas \ref{33} and \ref{34},
$\{\, t_{}^ih_{12}^j \mid \ 0 \le i \le 8, \ 0 \le j \le 2 \}$ is linear 
independent over $K$. 
Noting that quotient field of  $M$ and $\wtilde M$ is $K$, 
the last statement is clear.

(2) \enskip First we show that 
\ $\ZZ_3[\ts t, x_4, x_8, y_{10}, h_{12}] \in 
\wtilde M\{\, t_{}^ih_{12}^j \mid \ 0 \le i \le 8, \ 0 \le j \le 2 \}$.
To prove this assertion, it is enough to show that 
\ $t^n, h_{12}^n \in 
\wtilde M\{\, t_{}^ih_{12}^j \mid \ 0 \le i \le 8, \ 0 \le j \le 2 \}$.

As is shown in the above proof, we have
$t^n%
\in 
M[h_{12}]\{\, t^i \mid 0 \le i \le 8 \}$.
Next we will~ show that 
$\wtilde M[h_{12}]\{\, t^i \mid 0 \le i \le 8 \} 
= \wtilde M\{\, t_{}^ih_{12}^j \mid \ 0 \le i \le 8, \ 0 \le j \le 2 \}$.
We substitute $h_{16}=(x_{20} - h_{12}x_8)/x_4$ for $h_{16}$ in 
(\ref{eq:114})
and obtain (recalling 
$h_{18}=t\ts(h_{16}-x_8^2)+t^3(-h_{12}+x_4x_8)+t^5x_8-t^7x_4+t^9$)
$$ h_{18} = t\ts(x_{20}-h_{12}x_8)/x_4 -tx_8^2 +t^3(-h_{12}+x_4x_8)
                +t^5x_8 -t^7x_4 +t^9.
\leqno(\ref{eq:114})'
$$
We replace $h_{18}$ in (\ref{eq:215})
 by the righthand side of (\ref{eq:114})$'$
and obtain (recalling 
\[
\begin{array}{lll} g_{24} &=& -t^8x_8 +t^7y_{10} -t^6x_4^3 +t^6x_4x_8 +t^4x_4^2x_8 -t^4x_8^2
         +t^3h_{18} +t^3x_4^2y_{10} \\ 
   &&      -t^3x_8y_{10}+t^2x_{20}-t^2y_{10}^2-ty_{22}-tx_4x_8y_{10})\end{array}
\]

\noindent
$(\ref{eq:215})'
\qquad g_{24} = t^{12} -t^{10}x_4 +t^7x_4x_8+t^7y_{10}-t^7h_{12}
                -t^6x_4^3+t^6x_4x_8+t^4x_4^2x_8+t^4x_8^2
            \\ \mbox{}\hskip8em {}
                +t^4x_{20}/x_4 - t^4h_{12}x_8/x_4 
                +t^3x_4^2y_{10}-t^3x_8y_{10}+t^2x_{20}
                -t^2y_{10}^2-ty_{22}-tx_4x_8y_{10}.$

\bigskip
We replace $g_{24}$ in (\ref{eq:216})
by the righthand side of (\ref{eq:115})$'$
 and obtain (recalling 
$x_{36} = -g_{24}x_4^3+h_{12}^3-h_{12}^2x_4x_8
                   -h_{12}x_{20}x_4+h_{12}x_4^2x_8^2$)

\noindent
\begin{equation}
\begin{array}{l}
\label{eq:36}
 h_{12}^3 = - t^8x_4^3x_8 + t^7x_4^3y_{10} - t^6x_4^6
            + t^6x_4^4x_8 + t^4x_4^5x_8 - t^4x_4^3x_8^2
            + t^3 h_{12}x_4^2 y_{10} - t^3y_{22}x_4^2 
            \\ 
            + t^3x_4^5y_{10}
            - t^3x_4^3x_8y_{10} + t^2x_{20}x_4^3 - t^2x_4^3y_{10}^2
            -ty_{22}x_4^3 - tx_4^4x_8y_{10} 
            \\ 
            + h_{12}^2x_4x_8
            + h_{12}x_{20}x_4 - h_{12}x_4^2x_8^2 + x_{36}.
\end{array}
\end{equation}

From (\ref{eq:33})$'$ and (\ref{eq:36}),
when we note that $t^n\in 
\wtilde M\{\, t_{}^i, h_{12}t_{}^i \mid \ 0 \le i \le 8 \}$ for $ n=10,12 $,
we have 
\ $h_{12}^n \in 
\wtilde M\{\, t_{}^ih_{12}^j \mid \ 0 \le i \le 8, \ 0 \le j \le 2 \}$.
$$\wtilde M[h_{12}]\{\, t_{}^i \mid \ 0 \le i \le 8 \}
= \wtilde M\{\, t_{}^ih_{12}^j \mid \ 0 \le i \le 8, \ 0 \le j \le 2 \}.$$
\end{proof}

\begin{Theorem}\label{36}
$H^*(BT;\ZZ_3)^{W(E_6)} = H \cap \wtilde M$. 
\end{Theorem}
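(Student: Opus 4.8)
The plan is to prove the two inclusions $H^*(BT;\ZZ_3)^{W(E_6)} \subset H \cap \wtilde M$ and $H \cap \wtilde M \subset H^*(BT;\ZZ_3)^{W(E_6)}$ separately. The easier direction is the first: we already know from \eqref{eq:15} that $H^*(BT;\ZZ_3)^{W(E_6)} \subset \ZZ_3[t,p_1,p_2,c_5,p_3,p_4] = \ZZ_3[t,x_4,x_8,y_{10},h_{12},h_{16}] = H$, so it suffices to show that a $W(E_6)$-invariant element lies in $\wtilde M$. Given $\xi \in H^*(BT;\ZZ_3)^{W(E_6)}$, we have $\xi \in H$, and by Lemma \ref{35}(2) we may write $\xi = \sum_{0\le i\le 8,\ 0\le j\le 2} a_{ij}\, t^i h_{12}^j$ with $a_{ij} \in \wtilde M$. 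On the other hand, by Lemma \ref{34}, $K = \ZZ_3(t,t_1,\dots,t_5)^{W(E_6)}$, so $\xi \in K$, and by Lemma \ref{35}(1) the monomials $t^i h_{12}^j$ ($0\le i\le 8$, $0\le j\le 2$) form a $K$-basis of $L$. In particular $\xi = a_{00}$ and all other $a_{ij}$ vanish; hence $\xi = a_{00} \in \wtilde M$, so $\xi \in H \cap \wtilde M$.

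For the reverse inclusion $H \cap \wtilde M \subset H^*(BT;\ZZ_3)^{W(E_6)}$, take $\eta \in H \cap \wtilde M$. Since $\eta \in \wtilde M \subset M \otimes_{\ZZ_3[x_4]} \ZZ_3[x_4,x_4^{-1}] \subset K$, and $K = \ZZ_3(t,t_1,\dots,t_5)^{W(E_6)}$ by Lemma \ref{34}, the element $\eta$ is fixed by every element of $W(E_6)$ acting on the quotient field $\ZZ_3(t,t_1,\dots,t_5)$. But $W(E_6)$ acts on $H^*(BT;\ZZ_3) = \ZZ_3[t,t_1,\dots,t_5]$ by ring automorphisms (as established in the proof of Lemma \ref{33}), and $\eta \in H \subset H^*(BT;\ZZ_3)$ as well. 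An element of $H^*(BT;\ZZ_3)$ that is fixed under the induced action on the fraction field is \emph{a fortiori} fixed in $H^*(BT;\ZZ_3)$ itself, so $\eta \in H^*(BT;\ZZ_3)^{W(E_6)}$.

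The main subtlety I expect is purely bookkeeping: making sure that $\wtilde M$ is genuinely a subring of the fraction field $K = \ZZ_3(x_4,x_8,y_{10},x_{20},y_{22},x_{36})$ — which it is, since $x_4,x_8,y_{10},x_{20},y_{22},x_{36}$ are algebraically independent over $\ZZ_3$ by Corollary \ref{32}, so inverting $x_4$ causes no collision — and checking that $H$ is likewise viewed inside this same fraction field via $H \subset \ZZ_3[t,x_4,x_8,y_{10},h_{12},h_{16}] = L$ and $L = K(t)$ from Lemma \ref{31}. Both $H$ and $\wtilde M$ sit inside $L$, so the intersection $H \cap \wtilde M$ is unambiguous. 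Once these identifications are in place, the argument is just the two short inclusions above, each of which reduces to an application of Lemmas \ref{31}, \ref{34}, and \ref{35}; no new computation is needed. This then shows $H^*(BT;\ZZ_3)^{W(E_6)} = H \cap \wtilde M$, which is the statement of Theorem \ref{36}, and sets the stage for extracting the explicit generators in the subsequent analysis leading to Theorem 4.13.
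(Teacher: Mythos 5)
Your argument is correct and follows essentially the same route as the paper: both directions reduce to the identity $H^*(BT;\ZZ_3)^{W(E_6)} = H \cap K$ coming from Lemma \ref{34} (together with the inclusion of the invariants into $H$ from (\ref{eq:15})), and your coefficient-comparison in the $K$-basis $\{t^ih_{12}^j\}$ is exactly the paper's observation that $\wtilde M\{\,t^ih_{12}^j\,\}\cap K=\wtilde M$ via Lemma \ref{35}. No substantive difference in method or in the points that need care.
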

\begin{proof}
From Lemma \ref{34}, 
we have $H^*(BT;\ZZ_3)^{W(E_6)} = H \cap K$. 

Using Lemma \ref{35} 
(2), we see
$$H \cap K \subset
 \wtilde M\{\, t_{}^ih_{12}^j \mid \ 0 \le i \le 8, \ 0 \le j \le 2 \}
 \cap K.$$
We note that $K$ is the quotient field of $\wtilde M$ and
that $\{\, t_{}^ih_{12}^j \mid \ 0 \le i \le 8, \ 0 \le j \le 2 \}$ is
linear independent over $\wtilde M$ and $K$ from Lemma \ref{35} 
(1).
Hence we get
$$ \wtilde M\{\, t_{}^ih_{12}^j \mid \ 0 \le i \le 8, \ 0 \le j \le 2 \}
 \cap K = \wtilde M.$$
This implies that $H^*(BT;\ZZ_3)^{W(E_6)} \subset H \cap \wtilde M$. 

On the hand, we see that 
\ $H \cap \wtilde M \subset H \cap K = H^*(BT;\ZZ_3)^{W(E_6)}$.
\ So we have proved the theorem. 
\end{proof}

We state the main theorem in this section, that is,
$H^*(BT;\ZZ_3)^{W(E_6)}$ is generated 
by the thirteen elements\ts:
$$x_4,\  x_8,\  y_{10},\  x_{20},\  y_{22},\  y_{26},\  
x_{36},\  x_{48},\  x_{54},\  y_{58},\  y_{60},\  y_{64},\  y_{76}.$$
These elements except $x_4,\  x_8,\  y_{10},\  x_{36}$ are defined 
in $H \cap \wtilde M$ as follows:
\begin{eqnarray}
        &x_{20} &= h_{12}x_8+h_{16}x_4,\nonumber \\ 
        &y_{22} &= h_{12}y_{10}-h_{18}x_4,\nonumber\\
        &y_{26} &= (x_{20}y_{10}-y_{22}x_8)/x_4 = h_{16}y_{10}+h_{18}x_8,\nonumber\\
\label{eq:37}
   &x_{48} &= (-x_{36}x_8^3 + x_{20}^3 + x_{20}^2x_4x_8^2
                   + x_{20}x_4^2x_8^4)/x_4^3 \\
        &       &= h_{16}^3 + h_{16}^2 x_8^2 + h_{16}x_8^4 + g_{24}x_8^3,\nonumber\\
\label{eq:38}
 &x_{54} &= (x_{36}y_{10}^3 - y_{22}^3 + x_{20}y_{22}x_4y_{10}^2
                   + y_{22}^2x_4x_8y_{10} - y_{22}x_4^2x_8^2y_{10}^2)
                                                               /x_4^3 \\
        &       &= h_{18}^3 - h_{16}h_{18}y_{10}^2 + h_{18}^2x_8y_{10}
                   + h_{18}x_8^2y_{10}^2 - g_{24}y_{10}^3,\nonumber\\
\label{eq:39}
   &y_{58} &= (x_{36}x_8^2y_{10} - x_{20}^2y_{22})/x_4 \\
        &       &= (h_{16}^2 h_{18} - h_{16}^2 x_8y_{10} 
                - h_{16}x_8^3y_{10} - g_{24}x_8^2y_{10} 
                + y_{26}x_8^3)x_4^2 
\nonumber\\ && 
                +(h_{16}^2 y_{22} + h_{16}h_{18}x_{20} 
                + x_{20}y_{26}x_8 + y_{22}x_8^4)x_4
\nonumber\\ && 
                +(h_{16}x_{20}y_{22} + h_{18}x_{20}^2 
                + x_{20}y_{22}x_8^2),\nonumber\\
\label{eq:310}
  &y_{64} & = (y_{58}y_{10}-x_{20}y_{22}y_{26}-y_{22}^2x_8^3)/x_4\\
        &       & = (x_{36}x_8^2y_{10}^2 + x_{20}^2y_{22}y_{10} 
                + x_{20}y_{22}^2x_8 - y_{22}^2x_4x_8^3)/x_4^2\nonumber\\
        &       & = (h_{16}^2 h_{18}y_{10} - h_{16}^2 x_8y_{10}^2 
                + h_{16}h_{18}y_{26} - h_{16}x_8^3y_{10}^2 
                - g_{24}x_8^2y_{10}^2)x_4
\nonumber\\ && 
                +(y_{26}^2x_8 + y_{26}x_8^3y_{10})x_4
                +(h_{16}y_{22}y_{26} + h_{18}x_{20}y_{26} 
\nonumber\\ && \quad{} 
                - y_{22}y_{26}x_8^2 + y_{22}x_8^4y_{10}),\nonumber\\
\label{eq:311}
  &y_{60} & = (x_{36}x_8y_{10}^2  -  x_{20}y_{22}^2)/x_4\\
        &       & = ( - h_{16}h_{18}^2 - h_{16}^2 y_{10}^2 
                - h_{16}x_8^2y_{10}^2 - g_{24}x_8y_{10}^2 + y_{26}^2 
                + x_8^2y_{10}y_{26})x_4^2
\nonumber\\
 && 
                +(h_{16}h_{18}y_{22} + h_{18}^2x_{20} - y_{22}y_{26}x_8
                + y_{22}x_8^3y_{10})x_4
\nonumber
\end{eqnarray}
\begin{eqnarray} 
&& \quad{} 
                +(- h_{16}y_{22}^2 - h_{18}x_{20}y_{22} + y_{22}^2x_8^2).\nonumber\\
\label{eq:312}
        &y_{76} & = (y_{58}y_{22} + y_{60}x_{20} + x_{20}y_{22}^2x_8^2)/x_4\\
        &       & = (x_{36}x_{20}x_8y_{10}^2 + x_{36}y_{22}x_8^2y_{10}
                + x_{20}^2y_{22}^2 + x_{20}y_{22}^2x_4x_8^2)/x_4^2\nonumber\\
        &       & = ( - h_{16}^2 y_{10} - h_{16}x_8^2y_{10} - g_{24}x_8y_{10}
                + x_8^2y_{26})y_{26}x_4^2
\nonumber\\ && 
                + (h_{16}^2 h_{18}y_{22} - h_{16}h_{18}^2x_{20}
                + h_{16}^2 y_{22}x_8y_{10} + h_{16}y_{22}x_8^3y_{10})x_4
\nonumber\\ && 
                + (g_{24}y_{22}x_8^2y_{10} + x_{20}y_{26}^2)x_4
                + (h_{16}^2 y_{22}^2 - h_{16}h_{18}x_{20}y_{22}
                + h_{18}^2x_{20}^2 - y_{22}^2x_8^4).\nonumber
\end{eqnarray}

For a while, we denote by $A$ the algebra 
generated by the thirteen elements
\[
\{ x_4,x_8,y_{10},x_{20},y_{22},y_{26},x_{36},x_{48},x_{54},x_{58},y_{60},y_{64},y_{76} \} .
\]
Our aim is to prove that $A=H \cap \wtilde M = H^*(BT;\ZZ_3)^{W(E_6)}$.

We put $C = \ZZ_3[x_4, x_8, y_{10}]
                \{1, x_{20}, x_{20}^2, y_{22}, y_{22}^2, x_{20}y_{22}, 
                y_{58}, y_{60}, y_{76} \} \oplus 
             \ZZ_3[x_8, y_{10}]
                \{y_{26}, y_{26}^2,\linebreak 
                x_{20}y_{26}, y_{22}y_{26}, y_{64} \}
             $, where it is considered as a formal one.
Then we define a $\ZZ_3$-linear map %
$\sigma : C \otimes \ZZ_3[x_{36}, x_{48}, x_{54}] \to A$ by %
$\sigma(x^I\gamma_i \otimes y^J) = x^Iy^J\gamma_i$, 
where $x^I \in \ZZ_3[x_4, x_8, y_{10}]$, 
$y^J \in \ZZ_3[x_{36}, x_{48}, x_{54}]$ and $\gamma_i$ is an element
of $\{1, x_{20}, x_{20}^2, y_{22}, y_{22}^2, x_{20}y_{22}, 
                y_{58}, y_{60}, y_{76} \}$ and %
$\{y_{26}, y_{26}^2, x_{20}y_{26}, y_{22}y_{26}, y_{64} \}$.

\begin{Proposition}\label{37}
{\sl $\sigma : C \otimes \ZZ_3[x_{36}, x_{48}, x_{54}] \to A$ is
an isomorphism as a $\ZZ_3$-linear map. 
Hence its Poincar\'e polynomial PS is given by 
$$PS(A) = \Frac{g(t)}
        {(1-t^4)(1-t^8)(1-t^{10})(1-t^{36})(1-t^{48})(1-t^{54})}\, ,$$
where %
$g(t)= 1 +t^{20} +t^{22} +t^{26} -t^{30} +t^{40} +t^{42} +t^{44} +t^{46}
       +t^{48} -t^{50} -t^{56} +t^{58} +t^{60} +t^{64} -t^{68} +t^{76}$.
}
\end{Proposition}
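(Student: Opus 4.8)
The plan is to prove that $\sigma$ is both surjective and injective. Once this is done, $PS(A)=PS\big(C\otimes\ZZ_3[x_{36},x_{48},x_{54}]\big)$, and this last series is read off from the given basis $\{x^I\gamma_i\otimes y^J\}$: its denominator is $(1-t^4)(1-t^8)(1-t^{10})(1-t^{36})(1-t^{48})(1-t^{54})$, and its numerator collects the terms $t^{\deg\gamma_i}$ over the fourteen module generators $\gamma_i$, those belonging to the second summand $\ZZ_3[x_8,y_{10}]\{y_{26},y_{26}^2,x_{20}y_{26},y_{22}y_{26},y_{64}\}$ of $C$ being weighted by the extra factor $1-t^4$ (to account for their coefficient ring omitting $x_4$); multiplying this out gives precisely $g(t)$.

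For surjectivity I would argue that, since $\sigma(1\otimes1)=1$ and $A$ is generated as a $\ZZ_3$-algebra by the thirteen listed elements, it suffices to show that the image of $\sigma$ is closed under multiplication by each of them; the image is then a subalgebra containing all the generators, hence equal to $A$. Closure under $x_4,x_8,y_{10},x_{36},x_{48},x_{54}$ is immediate on the first summand of $C$, and on the second summand the only remaining case is multiplication by $x_4$, which is settled by the relation $x_4y_{26}=x_{20}y_{10}-x_8y_{22}$ of (\ref{eq:213}) together with the identities it produces (with the help of the defining relations of $y_{58}$ and $y_{60}$) for $x_4y_{26}^{2}$, $x_4x_{20}y_{26}$, $x_4y_{22}y_{26}$ and $x_4y_{64}$. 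Closure under $x_{20},y_{22},y_{26},y_{58},y_{60},y_{64},y_{76}$ amounts to rewriting the ``overflowing'' products: $x_{20}^{3}$ and $y_{22}^{3}$ are eliminated by the defining relations (\ref{eq:37}) and (\ref{eq:38}) of $x_{48}$ and $x_{54}$, the products $x_{20}^{2}y_{22}$ and $x_{20}y_{22}^{2}$ by those of $y_{58}$ and $y_{60}$ in (\ref{eq:39}) and (\ref{eq:311}), and each of the further products $x_{20}y_{58}$, $x_{20}y_{60}$, $y_{22}y_{58}$, $x_{20}^{2}y_{26}$, $x_{20}y_{64}$, $y_{22}y_{64}$ and the like collapses, after substituting (\ref{eq:37})--(\ref{eq:312}) and iterating $x_4y_{26}=x_{20}y_{10}-x_8y_{22}$, into a $\ZZ_3[x_4,x_8,y_{10},x_{36},x_{48},x_{54}]$-combination of the fourteen generators $\gamma_i$. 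This reduces surjectivity to a finite list of homogeneous polynomial identities, cleanest to confirm by machine as with the earlier computations of the paper.

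For injectivity, observe that $A\subseteq H\cap\wtilde M\subseteq\wtilde M$ and that $\wtilde M=\ZZ_3[x_4,x_4^{-1}][x_8,y_{10},x_{20},y_{22},x_{36}]$ is a localization of a polynomial ring on six elements algebraically independent over $\ZZ_3$ (Corollary \ref{32}); thus $\ZZ_3$-linear independence in $\wtilde M$ is a matter of ordinary Laurent monomials. Substituting (\ref{eq:37})--(\ref{eq:312}) and $y_{26}=(x_{20}y_{10}-x_8y_{22})/x_4$ realizes each basis vector $x^I\gamma_i\otimes y^J$ of the source as an explicit element of $\wtilde M$ under $\sigma$. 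I would then fix a multiplicative total order on the monomials of $\wtilde M$, refined so that each $\gamma_i$, as well as $x_{48}$ and $x_{54}$, has a single leading monomial: giving priority to the exponent of $x_{20}$, then of $y_{22}$, makes, for example, $x_{20}^{3}x_4^{-3}$ the leading monomial of $x_{48}$ and $x_{20}^{2}y_{22}x_4^{-1}$ that of $y_{58}$. Using that leading monomials multiply, one checks that the leading monomials of the $x^I\gamma_i\otimes y^J$ are pairwise distinct as $(I,J,i)$ ranges over the index set, and then the usual leading-term argument --- in a vanishing $\ZZ_3$-linear combination the term whose leading monomial is largest cannot be cancelled --- forces all the coefficients to vanish.

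The step I expect to be the main obstacle is the combinatorial part of the injectivity argument: checking that a single multiplicative order simultaneously selects an unambiguous leading monomial from every one of the fourteen $\gamma_i$ (and from $x_{48}$ and $x_{54}$) and makes the induced leading monomials of the entire basis $\{x^I\gamma_i\otimes y^J\}$ pairwise distinct. This is a finite but delicate verification, and it is exactly the point at which the specific shapes of the auxiliary elements $y_{58},y_{60},y_{64},y_{76}$ and the precise definition of the module $C$ are forced. The surjectivity identities, although numerous, are routine once these elements are available.
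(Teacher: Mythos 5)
Your surjectivity reduction and the Poincar\'e-series bookkeeping are essentially the paper's own route: the paper verifies the finite list of rewriting relations by direct (machine-assisted) calculation in Lemma \ref{39} and then concludes in Lemma \ref{310} that $\Img\sigma$ is a subalgebra containing the thirteen generators, hence equals $A$; and your description of the numerator (the second-summand generators weighted by $1-t^4$) reproduces exactly the paper's computation of $PS(C)$ and of $g(t)$. The genuine gap is in your injectivity argument. With the order you propose --- priority to the exponent of $x_{20}$, then of $y_{22}$ --- the leading monomials of the basis vectors are \emph{not} pairwise distinct. Indeed, in $\wtilde M$ one has $y_{76}=(x_{36}x_{20}x_8y_{10}^2+x_{36}y_{22}x_8^2y_{10}+x_{20}^2y_{22}^2+x_{20}y_{22}^2x_4x_8^2)/x_4^2$, whose leading monomial is $x_{20}^2y_{22}^2x_4^{-2}$, while $x_{54}=(x_{36}y_{10}^3-y_{22}^3+x_{20}y_{22}x_4y_{10}^2+y_{22}^2x_4x_8y_{10}-y_{22}x_4^2x_8^2y_{10}^2)/x_4^3$ has leading monomial $x_{20}y_{22}y_{10}^2x_4^{-2}$. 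Hence the two basis vectors $y_{10}^2\,y_{76}\otimes 1$ and $x_{20}y_{22}\otimes x_{54}$ (both of degree $96$) have the \emph{same} leading monomial $x_{20}^2y_{22}^2y_{10}^2x_4^{-2}$, so the leading-term cancellation argument cannot even begin: a combination such as $y_{10}^2y_{76}-x_{20}y_{22}x_{54}$ has cancelling leading terms. This is precisely the step you yourself flag as the main obstacle, and as stated it fails; it is not clear that any monomial order in the $\wtilde M$-coordinates $x_4^{\pm1},x_8,y_{10},x_{20},y_{22},x_{36}$ separates the whole basis, and no argument is offered for one.

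The paper's Lemma \ref{38} gets around this by changing coordinates rather than staying in $\wtilde M$: it filters $H=\ZZ_3[t,x_4,x_8,y_{10},h_{12},h_{16}]$ by the weight $w(t)=w(h_{12})=w(h_{16})=0$, $w(x_4)=1$, $w(x_8)=2$, $w(y_{10})=3$, and in the associated graded ring the thirteen generators become monomials in the six algebraically independent elements $x_4,x_8,y_{10},h_{12},h_{16},h_{18}$ (with $h_{18}=t^9-t^3h_{12}+th_{16}$): $x_{20}=x_4h_{16}$, $y_{22}=-x_4h_{18}$, $y_{26}=x_8h_{18}$, $x_{36}=h_{12}^3$, $x_{48}=h_{16}^3$, $x_{54}=h_{18}^3$, $y_{58}=x_4^2h_{16}^2h_{18}$, $y_{60}=x_4^2h_{16}h_{18}^2$, $y_{64}=x_4x_8h_{16}h_{18}^2$, $y_{76}=x_4^2h_{16}^2h_{18}^2$. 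In these coordinates the colliding pair above is separated ($y_{10}^2y_{76}\mapsto x_4^2y_{10}^2h_{16}^2h_{18}^2$ versus $x_{20}y_{22}x_{54}\mapsto -x_4^2h_{16}h_{18}^4$), and the only remaining interference between the two summands of $C$ is handled by the elementary observation $x_4S\cap T=\{0\}$ for $S=\ZZ_3[x_4,x_8,y_{10},x_{36},x_{48},x_{54}]$ and $T=\ZZ_3[x_8,y_{10},x_{36},x_{48},x_{54}]$. If you wish to keep your leading-term strategy, the repair is to run it in these $gr H$ coordinates (equivalently, to use this weight filtration), not in $\wtilde M$ with priority on $x_{20}$ and $y_{22}$.
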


To prove the proposition, we prepare some lemmas 

\begin{Lemma}\label{38}
{\sl $\sigma$ is injective.}
\end{Lemma}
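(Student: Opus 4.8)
The map $\sigma$ takes a basis-indexed element $x^I\gamma_i\otimes y^J$ of $C\otimes\ZZ_3[x_{36},x_{48},x_{54}]$ to the product $x^Iy^J\gamma_i$ in $A\subset \wtilde M$. So injectivity of $\sigma$ amounts to showing that these monomials are $\ZZ_3$-linearly independent in $\wtilde M = \ZZ_3[x_4,x_4^{-1}][x_8,y_{10},x_{20},y_{22},x_{36}]$. The plan is to exploit the structure of $\wtilde M$ as a \emph{polynomial} ring in the five algebraically independent elements $x_8,y_{10},x_{20},y_{22},x_{36}$ over $\ZZ_3[x_4,x_4^{-1}]$ — algebraic independence of $x_4,x_8,y_{10},x_{20},y_{22},x_{36}$ over $\ZZ_3$ being exactly Corollary \ref{32}. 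Thus the monomials in $x_8,y_{10},x_{20},y_{22},x_{36}$ form a $\ZZ_3[x_4,x_4^{-1}]$-basis of $\wtilde M$, and I would rewrite every generator $y_{26},y_{58},y_{60},y_{64},y_{76},x_{48},x_{54}$ in these coordinates using the explicit formulas \eqref{eq:37}--\eqref{eq:312} (the ``first line'' of each, which is already expressed as a Laurent polynomial in $x_4$ times a polynomial in $x_8,y_{10},x_{20},y_{22},x_{36}$).

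Concretely, I would consider an arbitrary element $\xi$ of $C\otimes\ZZ_3[x_{36},x_{48},x_{54}]$ with $\sigma(\xi)=0$, write $\xi = \sum_{i} c_i\,\gamma_i$ where $\gamma_i$ ranges over the fourteen ``$\gamma$-symbols'' $\{1,x_{20},x_{20}^2,y_{22},y_{22}^2,x_{20}y_{22},y_{58},y_{60},y_{76},y_{26},y_{26}^2,x_{20}y_{26},y_{22}y_{26},y_{64}\}$ and each $c_i$ lies in the appropriate polynomial ring ($\ZZ_3[x_4,x_8,y_{10},x_{36},x_{48},x_{54}]$ or $\ZZ_3[x_8,y_{10},x_{36},x_{48},x_{54}]$). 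Substituting the coordinate expressions for the $\gamma_i$, for $x_{48}$ and for $x_{54}$, I get a polynomial identity in $\ZZ_3[x_4,x_4^{-1}][x_8,y_{10},x_{20},y_{22},x_{36}]$ equal to zero. The key observation is a \emph{filtration/leading-term} argument: the nine symbols $\{1,x_{20},x_{20}^2,y_{22},y_{22}^2,x_{20}y_{22},y_{58},y_{60},y_{76}\}$ and the five $\{y_{26},y_{26}^2,x_{20}y_{26},y_{22}y_{26},y_{64}\}$ should, when expanded, have distinct leading monomials with respect to a suitable monomial order on $\ZZ_3[x_4,x_4^{-1}][x_8,y_{10},x_{20},y_{22},x_{36}]$ (for instance a lexicographic order with $x_{36}>y_{22}>x_{20}>y_{10}>x_8$). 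For example $y_{58}=(x_{36}x_8^2y_{10}-x_{20}^2y_{22})/x_4$ has a term $-x_4^{-1}x_{20}^2y_{22}$, which distinguishes it from $x_{20}^2$ and $x_{20}y_{22}$, etc. I would tabulate the leading term of each $\gamma_i$-image and of $x_{48},x_{54}$, verify all fourteen are pairwise distinct in a way compatible with multiplication by the free $\ZZ_3[x_4,x_4^{-1}]$- (or $\ZZ_3[x_8,y_{10}]$-) coefficients, and conclude that the only way the sum can vanish is $c_i=0$ for all $i$, i.e. $\xi=0$.

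The main obstacle is the bookkeeping: I must make sure the claimed ``distinct leading terms'' really are distinct \emph{after} tensoring with $\ZZ_3[x_{36},x_{48},x_{54}]$ and multiplying by arbitrary coefficient monomials — in particular that no leading monomial of, say, $c_i\gamma_i$ can coincide with one coming from a different $\gamma_j$ after absorbing powers of $x_{36},x_{48},x_{54}$ and $x_4,x_8,y_{10}$. Because $x_{48}$ and $x_{54}$ themselves expand (via \eqref{eq:37} and \eqref{eq:38}) into polynomials in $x_8,y_{10},x_{20},y_{22},x_{36}$ over $\ZZ_3[x_4,x_4^{-1}]$, one has to check these substitutions don't create collisions; here the point is that $x_{48}$ carries a term $-x_4^{-3}x_{36}x_8^3$ and $x_{54}$ a term $x_4^{-3}x_{36}y_{10}^3$, so the $x_4$-degree and the $x_{36}$-degree together separate them from the products of lower generators. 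I expect that once the table of leading terms is in place the linear-independence argument is immediate; the real work — which I would not grind through here — is verifying the table entry by entry and confirming the monomial order makes every comparison go the right way. An alternative, if the leading-term argument turns out to need too many case distinctions, is a dimension count: compare the Hilbert series of the source $C\otimes\ZZ_3[x_{36},x_{48},x_{54}]$ (which gives exactly the stated $PS(A)$) against a separately-established lower bound for $\dim_{\ZZ_3}A$ in each degree, but the direct leading-term approach is cleaner and is what I would attempt first.
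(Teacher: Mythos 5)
There is a genuine gap, and it sits exactly at the step you deferred. Your plan hinges on the claim that, for a suitable monomial order on $\wtilde M=\ZZ_3[x_4,x_4^{-1}][x_8,y_{10},x_{20},y_{22},x_{36}]$, the fourteen symbols (after expanding $x_{48},x_{54},y_{26},y_{58},\dots,y_{76}$ by the first lines of (\ref{eq:37})--(\ref{eq:312})) have leading monomials that stay distinct after multiplication by arbitrary coefficient monomials in $x_4,x_8,y_{10},x_{36},x_{48},x_{54}$. That is false for the order you propose (and collisions of this kind are hard to avoid in these coordinates). For instance, from $y_{58}=(x_{36}x_8^2y_{10}-x_{20}^2y_{22})/x_4$ one gets that $x_4\cdot y_{58}$ and $x_{36}x_8^2y_{10}\cdot 1$ have the same leading monomial $x_{36}x_8^2y_{10}$ (their difference being $-x_{20}^2y_{22}$, which is not itself an allowed coefficient-times-symbol product); similarly $x_4^3x_{48}\cdot 1$ and $x_{36}x_8^3\cdot 1$ both have leading monomial $\pm x_{36}x_8^3$, since $x_4^3x_{48}=-x_{36}x_8^3+x_{20}^3+x_{20}^2x_4x_8^2+x_{20}x_4^2x_8^4$. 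So the map (symbol, coefficient monomial) $\to$ leading monomial is not injective, the ``table of leading terms'' cannot close the argument, and one is forced into an analysis of how the lower-order terms fail to cancel --- essentially a syzygy computation, which is the whole difficulty. The issue is structural: in the coordinates $x_4^{\pm1},x_8,y_{10},x_{20},y_{22},x_{36}$ the polynomial generators $x_{48},x_{54}$ and the module generators interact through negative powers of $x_4$, so no single leading-term bookkeeping separates them.

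The paper avoids this by changing coordinates before filtering: it works in $H=\ZZ_3[t,x_4,x_8,y_{10},h_{12},h_{16}]$ and filters by the weight $w$ with $w(t)=w(h_{12})=w(h_{16})=0$, $w(x_4)=1$, $w(x_8)=2$, $w(y_{10})=3$. In the associated graded ring every one of the thirteen invariants becomes a monomial in the six algebraically independent elements $x_4,x_8,y_{10},h_{12},h_{16},h_{18}$ (with $h_{18}=t^9-t^3h_{12}+th_{16}$): e.g.\ $x_{20}=x_4h_{16}$, $y_{22}=-x_4h_{18}$, $y_{26}=x_8h_{18}$, $y_{58}=x_4^2h_{16}^2h_{18}$, and crucially $x_{36}=h_{12}^3$, $x_{48}=h_{16}^3$, $x_{54}=h_{18}^3$. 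Because the coefficient generators contribute only cubes of $h_{16}$ and $h_{18}$ while the fourteen symbols contribute exponents $0,1,2$, reading off the $h_{16}^ih_{18}^j$-components separates the symbol classes, and the residual ambiguities (e.g.\ $y_{22}$ versus $y_{26}$, both multiples of $h_{18}$) are resolved by $x_4$-divisibility, i.e.\ $x_4S\cap T=\{0\}$ with $S=\ZZ_3[x_4,x_8,y_{10},x_{36},x_{48},x_{54}]$ and $T=\ZZ_3[x_8,y_{10},x_{36},x_{48},x_{54}]$. If you want to salvage your approach, you would need to replace your order by something playing the role of this mod-3 exponent separation; as written, the deferred verification is not routine bookkeeping but the point at which the argument breaks.
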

\begin{proof}
We will show that the elements
\begin{equation}
\begin{array}{l}
A_1, A_2x_{20}, A_3x_{20}^2, A_4y_{22}, A_5y_{22}^2, A_6x_{20}y_{22}, 
A_7y_{58}, A_8y_{60}, A_9y_{76},\\
B_1y_{26}, B_2y_{26}^2, B_3x_{20}y_{26}, B_4y_{22}y_{26}, B_5y_{64} 
\end{array}
\label{eq:314}
\end{equation}
are linearly independent over $\ZZ_3$, where $A_i$ is a monomial 
of $x_4$, $x_8$, $y_{10}$, $x_{36}$, $x_{48}$, $x_{54}$ and %
$B_i$ a monomial of $x_8$, $y_{10}$, $x_{36}$, $x_{48}$, $x_{54}$.

To show it, we introduce a filtration 
to $H = \ZZ_3[t, x_4, x_8, y_{10}, h_{12}, h_{16}]$.
We define a weight $w$ by
$$ w(t)= w(h_{12}) = w(h_{16}) = 0, \quad
w(x_4)=1, \quad w(x_8)=2, \quad w(y_{10})= 3.$$
For a monomial of $H$, we define the weight by %
$w(x_4^{i_1} x_8^{i_2}y_{10}^{i_3}t^{i_4}h_{12}^{i_5}h_{16}^{i_6}) =
i_1w(x_4)+ i_2w(x_8)+ i_3w(y_{10})$.
For an element $x = \Sum \lambda_ix_i \in H$ \ $(\lambda_i \in \ZZ_3)$,
where $x_i$ is a monomial, 
we define that $\displaystyle w(x) = \inf_i w(x_i)$.
Then we introduce in $H$ by setting
$$F^pH = \{ x\in H \mid w(x) \ge p \}.$$
Since $F^{p+1}H$ is an ideal of $F^pH$ \ $(p \ge 0)$,
the associated graded
module $gr H = \mathop{\oplus}\limits_{p \ge 0}F^pH/F^{p+1}H$ is
an algebra. In $gr H$, we have
\begin{equation}
\begin{array}{l}
x_{20}=x_4h_{16}, \\ 
y_{22}=-x_4h_{18},  \quad (h_{18} = t^9 - t^3h_{12} + th_{16},) \\
y_{26}=x_8h_{18},\\
x_{36}=h_{12}^3, \quad 
x_{48}=h_{16}^3, \quad 
x_{54}=h_{18}^3, \\
y_{58}=x_4^2h_{16}^2 h_{18},  \quad 
y_{64}=x_4x_8h_{16}h_{18}^2,  \quad 
y_{60}=x_4^2h_{16}h_{18}^2,  \quad 
y_{76}=x_4^2h_{16}^2 h_{18}^2. 
\end{array}
\label{eq:315}
\end{equation}
For simplicity we denote $\ZZ_3[x_4, x_8, y_{10}, x_{36}, x_{48}, x_{54}]$ by $S$ and %
$\ZZ_3[x_8, y_{10}, x_{36}, x_{48}, x_{54}]$ by $T$.
Let 
$$
\begin{array}{l}
A_1 + A_2x_{20} + A_3x_{20}^2 + A_4y_{22} + A_5y_{22}^2 
+ A_6x_{20}y_{22} + A_7y_{58} + A_8y_{60} + A_9y_{76} \\
+ B_1y_{26} + B_2y_{26}^2 + B_3x_{20}y_{26} 
+ B_4y_{22}y_{26} + B_5y_{64}
\end{array}
\leqno(\ref{eq:315})'$$ 
equal to zero. Then we see that
$A_1 + A_2'h_{16} + A_3''h_{16}^2 + A_4'h_{18} + A_5''h_{18}^2 
+ A_6''h_{16}h_{18} + A_7''h_{16}^2 h_{18} + A_8''h_{16}h_{18}^2 
+ A_9''h_{16}^2 h_{18}^2 
+ \wtilde{B}_1h_{18} + \wtilde B_2h_{18}^2 
+ B_3'h_{16}h_{18} + B_4'h_{18}^2 + B_5'h_{16}h_{18}^2
=0$,
where $A_i \in S$, \ $A_i'=x_4A_i \in x_4S$, \ $A_i'' = x_4^2A_i 
\in x_4^2S$ %
\ and \ $B_i, \; \wtilde B_1=x_8B_1,\; \wtilde B_2=x_8^2B_2 \in T$, %
$B_i'=x_4x_8B_i \linebreak
\in x_4T$.

Noting $h_{18} = t^9 - t^3h_{12} + th_{16}$ in $gr H$, 
the elements $x_4$, $x_8$, $y_{10}$, $h_{12}$, $h_{16}$, $h_{18}$ are
algebraically independent over $\ZZ_3$.
So we obtain $A_1 = A_2' = A_3'' = A_7'' = A_8'' = 0$ at once.
With the respect to the coefficients of $h_{18}$, 
we note the fact $x_4S \cap T = \{ 0 \}$ and get $A_4' = \wtilde B_1=0$.
We can prove that the other coefficients are zero in a very similar way.
Hence we see that the elements indicated at (\ref{eq:314}) 
are 
linearly independent.
\end{proof}

Next we will prove that $\sigma$ is surjective. 
This part is the most crucial step in this section.
Before proving it, we observe the following lemma by a tedious
calculation.

\begin{Lemma}\label{39}
{\sl There are relations in $A$}:
$$\begin{array}{lll}
x_{54}x_8^3 &=& x_{48}y_{10}^3 - y_{26}^3 - y_{26}^2x_8^2y_{10}
          - y_{26}x_8^4y_{10}^2, \\
y_{58}x_8 &=&  - x_{48}x_4^2y_{10} + x_{20}^2y_{26} 
          + x_{20}y_{22}x_8^3
          + x_{20}y_{26}x_4x_8^2 + y_{22}x_4x_8^5 + y_{26}x_4^2x_8^4, \\
y_{58}y_{10} &=& y_{64}x_4 + x_{20}y_{22}y_{26} + y_{22}^2x_8^3, \\
y_{58}y_{22} &=&  - y_{60}x_{20} + y_{76}x_4 - x_{20}y_{22}^2x_8^2, \\
y_{58}y_{26} &=& x_{48}y_{22}x_4y_{10} + y_{64}x_{20}
          - x_{20}y_{22}y_{26}x_8^2 - y_{22}^2x_8^5 
          - y_{22}y_{26}x_4x_8^4, \\
y_{58}^2 &=&  - x_{48}y_{60}x_4^2 - x_{48}x_{20}^2x_4^2y_{10}^2
          -  x_{48}x_{20}y_{22}^2x_4 - x_{48}x_{20}x_4^3x_8^2y_{10}^2
          + y_{76}x_{20}^2 
    \\ && {}
          - x_{20}^3y_{22}y_{26}x_8 + x_{20}^3y_{26}^2x_4
          - x_{20}^3x_4x_8^4y_{10}^2 - x_{20}^2y_{22}^2x_8^4
          + x_{20}^2y_{22}y_{26}x_4x_8^3 
    \\ && {}
          + x_{20}^2x_4^2x_8^6y_{10}^2 - x_{20}^2y_{26}^2x_4^2x_8^2, \\
y_{58}y_{60} &=&  - x_{48}x_{54}x_4^4 - x_{48}y_{22}^3x_4
          - x_{48}y_{22}^2x_4^2x_8y_{10} + x_{48}y_{22}x_4^3x_8^2y_{10}^2
 \\ && {}
          + x_{48}y_{22}y_{26}x_4^3y_{10}
          - x_{48}y_{26}x_4^4x_8y_{10}^2 + x_{54}x_{20}^3x_4
          + x_{54}x_{20}^2x_4^2x_8^2 - y_{76}x_{20}y_{22} 
    \\ && {}
          - x_{20}^2y_{22}y_{26}^2x_4 - x_{20}y_{22}^2y_{26}x_4x_8^3
          + x_{20}y_{26}^3x_4^3x_8 - y_{22}^3x_4x_8^6
         + y_{22}^2x_4^2x_8^7y_{10} 
    \\ && {}
           - y_{22}^2y_{26}x_4^2x_8^5 - y_{22}y_{26}x_4^3x_8^6y_{10}
          + y_{22}y_{26}^2x_4^3x_8^4 + y_{26}^2x_4^4x_8^5y_{10}
          + y_{26}^3x_4^4x_8^3, \\
\end{array}
$$
$$
\begin{array}{lll}
y_{58}y_{64} &=& x_{48}x_{54}x_4^3x_8 + x_{48}y_{22}^3x_8
           - x_{48}y_{22}^2y_{26}x_4 - x_{48}y_{22}x_4^2x_8^3y_{10}^2 
    \\ 
&& {}
          + x_{48}y_{22}y_{26}x_4^2x_8y_{10} + x_{54}x_{20}^3x_8
          + x_{54}x_{20}^2x_4x_8^3 - x_{20}^2y_{22}y_{26}^2x_8 
    \\ && {}
          - x_{20}y_{22}^2y_{26}x_8^4 
         + y_{22}^3x_8^7 + y_{22}^2x_4x_8^8y_{10}
          + y_{22}^2y_{26}x_4x_8^6 + y_{22}y_{26}x_4^2x_8^7y_{10}, \\
y_{58}y_{76} &=& x_{48}x_{54}x_{20}x_4^3 - x_{48}y_{60}y_{22}x_4
          + x_{48}x_{20}y_{22}^3 + x_{48}y_{22}^2x_4^2x_8^3y_{10} 
    \\ && {}
          + x_{48}y_{22}y_{26}x_4^3x_8^2y_{10} + x_{54}x_{20}^4
          + x_{54}x_{20}^3x_4x_8^2 + x_{54}x_{20}^2x_4^2x_8^4 
    \\ && {}
          + x_{20}^2y_{22}^2y_{26}x_8^3
          - x_{20}^2y_{22}y_{26}^2x_4x_8^2 - x_{20}y_{22}^3x_8^6
          + x_{20}y_{22}^2y_{26}x_4x_8^5 
    \\ && {}
          - x_{20}y_{22}y_{26}^2x_4^2x_8^4
          - y_{22}^3x_4x_8^8
          + y_{22}^2y_{26}x_4^2x_8^7 - y_{22}y_{26}^2x_4^3x_8^6, \\
y_{60}x_8 &=& y_{64}x_4 - x_{20}y_{22}y_{26} + y_{22}^2x_8^3, \\
y_{60}y_{10} &=& x_{54}x_4^2x_8 + y_{22}^2x_8^2y_{10} - y_{22}^2y_{26}
          + y_{22}y_{26}x_4x_8^3y_{10}^2 - y_{22}x_4x_8y_{10}, \\
y_{60}y_{26} &=&  - y_{64}y_{22} + x_{54}x_{20}x_4x_8 
          + y_{22}^2x_8^4y_{10}
          + y_{22}y_{26}x_4x_8^3y_{10} - y_{22}y_{26}^2x_4x_8, \\
y_{60}^2 &=& x_{48}y_{22}^2x_4^2y_{10}^2 
          - x_{48}y_{22}x_4^3x_8y_{10}^3
          + x_{54}y_{58}x_4^2 + x_{54}x_{20}^2y_{22}x_4
          - x_{54}x_{20}y_{22}x_4^2x_8^2 
    \\ && {}
          + y_{76}y_{22}^2 + x_{20}y_{22}^3y_{26}x_8
          - x_{20}y_{22}^2y_{26}^2x_4 - y_{22}^4x_8^4
          - y_{22}^3x_4x_8^5y_{10} + y_{22}^3y_{26}x_4x_8^3 
    \\ && {}
          + y_{22}^2x_4^2x_8^6y_{10}^2 + y_{22}y_{26}x_4^3x_8^5y_{10}^2
          + y_{22}y_{26}^2x_4^3x_8^3y_{10} + y_{22}y_{26}^3x_4^3x_8, \\
y_{60}y_{64} &=&  - x_{48}x_{54}x_4^3y_{10} - x_{48}y_{22}^3y_{10}
          - x_{48}y_{22}^3y_{10} - x_{48}y_{22}^2x_4x_8y_{10}^2
          + x_{48}y_{22}x_4^2x_8^2y_{10}^3 
    \\ 
&& {}
          - x_{48}y_{26}x_4^3x_8y_{10}^3 - x_{54}x_{20}^2y_{22}x_8
          + x_{54}x_{20}^2y_{26}x_4 + x_{54}x_{20}y_{26}x_4^2x_8^2 
    \\ && {}
          + y_{22}^3x_8^6y_{10}
          - y_{22}^3y_{26}x_8^4 + y_{22}^2x_4x_8^7y_{10}^2
          - y_{22}^2y_{26}x_4x_8^5y_{10} 
          - y_{22}y_{26}x_4^2x_8^6y_{10}^2 
    \\ && {}
          - y_{22}y_{26}^2x_4^2x_8^4y_{10}
          - y_{22}y_{26}^3x_4^2x_8^2 + y_{26}^2x_4^3x_8^5y_{10}^2
          + y_{26}^3x_4^3x_8^3y_{10} + y_{26}^4x_4^3x_8, \\
y_{60}y_{76} &=&  - x_{48}x_{54}y_{22}x_4^3 - x_{48}y_{22}^4
          - x_{48}y_{22}^3x_4x_8y_{10} + x_{48}y_{22}^2y_{26}x_4^2y_{10} 
    \\ && {}
          + x_{48}y_{22}y_{26}x_4^3x_8y_{10}^2 + x_{54}y_{58}x_{20}x_4
          - x_{54}x_{20}^3y_{22} - x_{20}^2y_{22}^2y_{26}^2
          - x_{20}y_{22}^3y_{26}x_8^3 
    \\ && {}
          + x_{20}y_{22}^2y_{26}^2x_4x_8^2 - x_{20}y_{22}y_{26}^3x_4^2x_8
          - y_{22}^4x_8^6 - y_{22}^3x_4x_8^7y_{10}
          + y_{22}^2y_{26}x_4^2x_8^6y_{10} 
    \\ && {}
          - y_{22}y_{26}^2x_4^3x_8^5y_{10} - y_{22}y_{26}^3x_4^3x_8^3, \\
y_{64}x_8 &=&  - x_{48}x_4y_{10}^2 + x_{20}y_{26}^2 + y_{22}x_8^5y_{10}
          - y_{22}y_{26}x_8^3 + y_{26}x_4x_8^4y_{10} + y_{26}^2x_4x_8^2, \\
y_{64}y_{10} &=& x_{54}x_4x_8^2 + y_{22}x_8^4y_{10}^2
          - y_{22}y_{26}x_8^2y_{10} + y_{22}y_{26}^2, \\
y_{64}y_{26} &=& x_{48}y_{22}y_{10}^2 + x_{54}x_{20}x_8^2
          + y_{22}y_{26}^2x_8^2, \\
y_{64}^2 &=& x_{48}^2y_{26}^2x_4^2y_{10}^2
          - x_{48}x_{54}x_4^2x_8y_{10} + x_{48}y_{22}^2y_{26}y_{10}
          + x_{48}y_{22}x_4x_8^3y_{10}^3 
    \\ && {}
          - x_{48}y_{22}y_{26}x_4x_8y_{10}^2
          - x_{48}y_{26}x_4^2x_8^2y_{10}^3 + x_{54}x_{20}^2y_{26}x_8 
    \\ && {}
          + x_{20}y_{26}^4x_4 + y_{22}^2x_8^8y_{10}^2
          + y_{22}^2y_{26}x_8^6y_{10} - y_{22}^2y_{26}^2x_8^4
          - y_{22}y_{26}x_4x_8^7y_{10}^2  
    \\ && {}
          + y_{26}^2x_4^2x_8^6y_{10}^2
          - y_{26}^3x_4^2x_8^4y_{10}
          - y_{26}^4x_4^2x_8^2, \\
y_{64}y_{76} &=& x_{48}x_{54}y_{22}x_4^2x_8 - x_{48}x_{54}y_{26}x_4^3
          + x_{48}y_{22}^3y_{26} - x_{48}y_{22}^2x_4x_8^3y_{10}^2 
    \\ && {}
          - x_{48}y_{22}y_{26}x_4^2x_8^2y_{10}^2
          - x_{48}y_{22}y_{26}^2x_4^2y_{10}
          - x_{48}y_{26}^2x_4^3x_8y_{10}^2 
          + x_{54}x_{20}^3y_{26} 
    \\ && {}
          + x_{54}x_{20}^2y_{26}x_4x_8^2 + x_{20}^2y_{22}y_{26}^3
          - x_{20}y_{22}^2y_{26}^2x_8^3 + x_{20}y_{22}y_{26}^3x_4x_8^2 
    \\ && {}
          + x_{20}y_{26}^4x_4^2x_8 - y_{22}^3x_8^8y_{10}
          + y_{22}^3y_{26}x_8^6 - y_{22}^2y_{26}x_4x_8^7y_{10}
          - y_{22}^2y_{26}^2x_4x_8^5 
    \\ && {}
          + y_{22}y_{26}^2x_4^2x_8^6y_{10} - y_{22}y_{26}^3x_4^2x_8^4
          + y_{26}^3x_4^3x_8^5y_{10} + y_{26}^4x_4^3x_8^3, \\
y_{76}x_8 &=&  - x_{48}y_{22}x_4y_{10} + y_{64}x_{20}
          + x_{20}y_{22}y_{26}x_8^2 + y_{22}^2x_8^5 
          + y_{22}y_{26}x_4x_8^4, \\
y_{76}y_{10} &=& x_{54}x_{20}x_4x_8 + y_{64}y_{22}
          + y_{22}^2x_8^4y_{10} + y_{22}^2y_{26}x_8^2
          + y_{22}y_{26}x_4x_8^3y_{10} 
    \\ && {}
          - y_{22}y_{26}^2x_4x_8, \\
\end{array}
$$
$$
\begin{array}{lll}
y_{76}y_{26} &=& x_{48}y_{22}^2y_{10} + x_{54}x_{20}^2x_8
          - x_{20}y_{22}y_{26}^2x_8 + y_{22}^2y_{26}x_8^4
          + y_{22}y_{26}^2x_4x_8^3, \\
y_{76}^2 &=& x_{48}x_{54}x_{20}y_{22}x_4^2 + x_{48}y_{60}y_{22}^2
          + x_{54}y_{58}x_{20}^2 + x_{54}x_{20}^3y_{22}x_8^2
          + x_{20}^2y_{22}^2y_{26}^2x_8^2 
    \\ && {}
          + x_{20}y_{22}^3y_{26}x_8^5 + x_{20}y_{22}^2y_{26}^2x_4x_8^4
          + y_{22}^4x_8^8 - y_{22}^3y_{26}x_4x_8^7
          + y_{22}^2y_{26}^2x_4^2x_8^6.\\
\end{array}
$$
\end{Lemma}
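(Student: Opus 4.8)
The statement is a list of polynomial identities among the thirteen generators, and the plan is to verify each of them after rewriting everything inside one fixed polynomial model. Recall that $y_{26}=(x_{20}y_{10}-y_{22}x_8)/x_4$ and that $x_{48},x_{54},y_{58},y_{60},y_{64},y_{76}$ are \emph{defined}, by the first equalities in the displays (\ref{eq:37})--(\ref{eq:312}), as explicit elements of $\wtilde M=\ZZ_3[x_4,x_4^{-1}][x_8,y_{10},x_{20},y_{22},x_{36}]$. By Corollary~\ref{32} the six elements $x_4,x_8,y_{10},x_{20},y_{22},x_{36}$ are algebraically independent over $\ZZ_3$, so $\wtilde M$ is the localization at $x_4$ of an honest polynomial ring, hence an integral domain in which $x_4$ is a non-zero-divisor. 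Therefore an equality in $\wtilde M$ holds precisely when, after clearing the powers of $x_4$ that occur, it becomes a genuine polynomial identity in $\ZZ_3[x_4,x_8,y_{10},x_{20},y_{22},x_{36}]$. The whole lemma thus amounts to a finite list of such identities, each checked by substituting the above definitions, clearing $x_4$-denominators, reducing modulo $3$, expanding, and comparing monomials.

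Concretely I would proceed in two stages. First I would dispose of the relations that are linear in a single new generator — $x_{54}x_8^3=\dots$, $y_{58}x_8=\dots$, $y_{60}x_8=\dots$, $y_{64}x_8=\dots$, $y_{76}x_8=\dots$, and then the companion relations with $y_{10}$ and with $y_{26}$ — each of which becomes, after substitution, a polynomial identity of moderate size. Then I would treat the quadratic relations $y_{58}^2,\ y_{58}y_{60},\ y_{58}y_{64},\ y_{58}y_{76},\ y_{60}^2,\ \dots,\ y_{76}^2$, using the first batch of relations to bring their right-hand sides into the normal form in terms of $C$ and $\ZZ_3[x_{36},x_{48},x_{54}]$ that appears in Proposition~\ref{37}. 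Logically no such ordering is forced — every generator has already been pinned down as a fixed element of $\wtilde M$ before any relation is inspected — so the staging is only a bookkeeping device. For an independent check one may repeat the computation inside $H=\ZZ_3[t,x_4,x_8,y_{10},h_{12},h_{16}]$ via the \emph{second} equalities in (\ref{eq:37})--(\ref{eq:312}) together with the definitions of $h_{18}$ (see (\ref{eq:114})), $g_{24}$ and $x_{36}$, or, most elementarily, by expanding all thirteen generators as polynomials in the original variables $t,t_1,\dots,t_5$ of $H^*(BT;\ZZ_3)=\ZZ_3[t,t_1,\dots,t_5]$ and verifying the identities there.

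The only real obstacle is the size of the algebra: the polynomials that arise, especially in the quadratic relations, have many monomials — comparable to the expressions $P$, $P_1$, $P_2$, $P_3$ handled in Section~3 — so the verification is, and should be, carried out with a computer algebra system. There is no conceptual difficulty; the single point needing care is to keep the two descriptions of each generator consistent, i.e. to fix one of the two equalities in (\ref{eq:37})--(\ref{eq:312}) as its definition and regard the other as a derived identity, so that the argument is not circular.
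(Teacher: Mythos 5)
Your proposal matches the paper's own treatment: the paper establishes these relations simply ``by a tedious calculation,'' i.e.\ by direct substitution of the defining expressions for the generators and expansion (with machine assistance), which is exactly the plan you describe. Your added justification---that by Corollary~\ref{32} each identity reduces, after clearing powers of $x_4$, to an identity in the genuine polynomial ring $\ZZ_3[x_4,x_8,y_{10},x_{20},y_{22},x_{36}]$ (or equivalently can be checked in $H$ via the second set of expressions)---is sound and consistent with the paper, as is your caution about fixing one of the two given expressions as the definition of each generator.
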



To prove Proposition \ref{37},
we calculate the Poincar\'e polynomial of $A$.
From Lemmas \ref{38} and \ref{39}, we have
$$
PS(A) = PS(C \otimes \ZZ_3[x_{36}, x_{48}, x_{54}]) 
      = PS(C) \cdot PS(\ZZ_3[x_{36}, x_{48}, x_{54}]).
$$ 
Let $A\{s\}$ be a free $A$-module with the set of generators $S$.
Then we have $PS(A\{s\}) = PS(A) PS(\{s\})$.
Using this formula, we have
$$
PS(C) =
\Frac{1+t^{20}+t^{40}+t^{22}+t^{44}+t^{42}+t^{58}+t^{60}+t^{76}}
     {(1-t^4)(1-t^8)(1-t^{10})}
+
\Frac{t^{26}+t^{52}+t^{48}+t^{46}+t^{64}}{(1-t^8)(1-t^{10})}
.$$
We obtain the Poincar\'e polynomial of $A$ by a direct calculation.

\begin{Lemma}\label{310}
{\sl $\sigma$ is surjective.}
\end{Lemma}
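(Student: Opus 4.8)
The strategy is to show that $A = H \cap \wtilde M$ by first proving, via a dimension/Poincaré-series comparison, that the image of $\sigma$ is the whole of $A$, and then identifying $A$ with $H \cap \wtilde M$. Since injectivity of $\sigma$ is already established in Lemma 4.8 and the relations in Lemma 4.9 are in hand, the real content here is a counting argument. First I would observe that $A$ is, by definition, the $\ZZ_3$-subalgebra of $H \cap \wtilde M$ generated by the thirteen listed elements, and that the relations of Lemma 4.9 express every product of two of the ``decomposable'' generators $y_{26},y_{58},y_{60},y_{64},y_{76}$ (and of these with $x_{20},y_{22}$) as a $\ZZ_3[x_4,x_8,y_{10},x_{36},x_{48},x_{54}]$-linear combination of the basis monomials $\gamma_i$ appearing in the definition of $C$. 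Consequently every element of $A$ can be rewritten as a $\ZZ_3[x_4,x_8,y_{10},x_{36},x_{48},x_{54}]$-linear combination of the $\gamma_i$, i.e.\ lies in the image of $\sigma$; this is the surjectivity statement. I would carry out this reduction explicitly: given a monomial in the thirteen generators, repeatedly apply the rewriting rules of Lemma 4.9 to any occurrence of a product of two ``high'' generators until none remains, and check that the process terminates (e.g.\ by the degree bookkeeping built into $C$, noting that $x_{36},x_{48},x_{54}$ are free polynomial generators).

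The main obstacle is verifying that the rewriting process of Lemma 4.9 is \emph{complete}, that is, that the listed relations suffice to reduce \emph{every} product of two generators from the set $\{x_{20},y_{22},y_{26},y_{58},y_{60},y_{64},y_{76}\}$ — there are on the order of two dozen such products — and that no product falls outside the $\ZZ_3[x_4,x_8,y_{10},x_{36},x_{48},x_{54}]$-span of the thirteen $\gamma_i$'s. One checks this by inspection of Lemma 4.9: the relations there cover exactly $x_{54}x_8^3$, $y_{58}x_8$, $y_{58}y_{10}$, $y_{58}y_{22}$, $y_{58}y_{26}$, $y_{58}^2$, $y_{58}y_{60}$, $y_{58}y_{64}$, $y_{58}y_{76}$, $y_{60}x_8$, $y_{60}y_{10}$, $y_{60}y_{26}$, $y_{60}^2$, $y_{60}y_{64}$, $y_{60}y_{76}$, $y_{64}x_8$, $y_{64}y_{10}$, $y_{64}y_{26}$, $y_{64}^2$, $y_{64}y_{76}$, $y_{76}x_8$, $y_{76}y_{10}$, $y_{76}y_{26}$, $y_{76}^2$, together with the relations already recorded in (4.37)--(4.44) for the lower products (e.g.\ $y_{26}x_4 = x_{20}y_{10}-y_{22}x_8$, $y_{26}^3$, $x_{48}x_8^3$, $y_{58}x_4$, etc.). I would argue that this list is exactly what is needed: any product of two generators either already appears among the $\gamma_i$ (e.g.\ $x_{20}y_{22}$, $x_{20}y_{26}$, $y_{22}y_{26}$), or is reduced by one of these relations to the $S$- or $T$-span of the $\gamma_i$, where $S=\ZZ_3[x_4,x_8,y_{10},x_{36},x_{48},x_{54}]$ and $T=\ZZ_3[x_8,y_{10},x_{36},x_{48},x_{54}]$ as in the proof of Lemma 4.8.

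Having established surjectivity, I would then close the loop on Proposition 4.7: combining Lemma 4.8 (injectivity) with Lemma 4.10 gives that $\sigma$ is a $\ZZ_3$-linear isomorphism, so $PS(A) = PS(C)\cdot PS(\ZZ_3[x_{36},x_{48},x_{54}])$, which equals the stated rational function with numerator $g(t)$ after the direct computation of $PS(C)$ already displayed. Finally, to conclude the stronger statement $A = H \cap \wtilde M = H^*(BT;\ZZ_3)^{W(E_6)}$, I would compare $PS(A)$ with the Poincaré series of $H \cap \wtilde M$: since $A \subset H \cap \wtilde M$ by construction (all thirteen generators were exhibited as elements of $H \cap \wtilde M$) and both have the same Poincaré series — the latter being computable from Theorem 4.6 together with Lemma 4.5, which describes $H$ as a free module of rank $27$ over $\wtilde M$ on the basis $\{t^i h_{12}^j\}$ — the inclusion must be an equality in every degree. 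The delicate point to watch in this last step is that one must verify $PS(H\cap\wtilde M)$ indeed equals $g(t)/\big((1-t^4)(1-t^8)(1-t^{10})(1-t^{36})(1-t^{48})(1-t^{54})\big)$; this follows by intersecting the rank-$27$ decomposition of $H$ over $\wtilde M$ with $\wtilde M$ itself, but the compatibility of the weight filtration used in Lemma 4.8 with the grading is what makes the monomial count come out right.
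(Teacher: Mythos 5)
Your argument is essentially the paper's own: the relations of Lemma \ref{39}, together with the defining relations (\ref{eq:37})--(\ref{eq:312}) and (\ref{eq:213}), show that $\Img\sigma$ is closed under multiplication, hence is a subalgebra containing the thirteen generators; since $A$ is by definition the minimal such algebra, $A\subset\Img\sigma$, and the reverse inclusion is obvious, which is exactly how the paper concludes. Your final paragraph (Poincar\'e series of $A$ versus $H\cap\wtilde M$) is extra material belonging to Proposition \ref{37} and Theorem \ref{313} rather than to this lemma --- the paper settles that identification later by the mod $x_4$ argument of Lemma \ref{312} --- but the surjectivity proof itself coincides with the paper's.
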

\begin{proof}
The result of Lemma \ref{39} 
claims that $\Img \sigma$ is an algebra.
From the definition, $A$ is the minimal algebra of which contains 
the generators $x_4$, $x_8$, $y_{10}$, $x_{20}$, $y_{22}$, $y_{26}$, 
$x_{36}$, $x_{48}$, $x_{54}$, $y_{58}$, $y_{60}$, $y_{64}$, $y_{76}$.
It means that $A \subset \Img \sigma$.
But $\Img \sigma \subset A$ in obvious.
So we obtain that $\sigma$ is surjective.
\end{proof}

\begin{Proposition}\label{311}
{\sl We have 
the following presentation of $A$.}
$$\begin{array}{rl}
A = 
\Big(\ZZ_3[x_4, x_8, y_{10}]
        \{1, x_{20}, x_{20}^2, y_{26}, y_{26}^2, x_{20}y_{26}, 
                                        y_{58}, y_{64}, y_{76} \}&\\
\mbox{}\qquad\qquad
  \oplus 
  \ZZ_3[x_8, y_{10}]
        \{y_{22}, y_{22}^2, x_{20}y_{22}, y_{22}y_{26}, y_{60}\}&\Big)
\otimes 
  \ZZ_3[x_{36}, x_{48}, x_{54} ].
\end{array}$$
\end{Proposition}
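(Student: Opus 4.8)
The plan is to derive this presentation from Proposition~\ref{37}, of which it is essentially a re-writing. By Proposition~\ref{37} the map $\sigma$ is a $\ZZ_3$-isomorphism, so $A$ is identified, as a graded $\ZZ_3$-module, with $C\otimes\ZZ_3[x_{36},x_{48},x_{54}]$; equivalently, the products of the thirteen generators $1,x_{20},x_{20}^2,y_{22},y_{22}^2,x_{20}y_{22},y_{58},y_{60},y_{76}$ and $y_{26},y_{26}^2,x_{20}y_{26},y_{22}y_{26},y_{64}$ with monomials of $\ZZ_3[x_{36},x_{48},x_{54}]$, each generator taken with coefficients in the ring attached to it in $C$, form a $\ZZ_3$-basis of $A$. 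It therefore suffices to prove that, as $\ZZ_3$-submodules of $A$,
$$\ZZ_3[x_4,x_8,y_{10}]\{y_{22},y_{22}^2,x_{20}y_{22},y_{60}\}\;\oplus\;\ZZ_3[x_8,y_{10}]\{y_{26},y_{26}^2,x_{20}y_{26},y_{64}\}$$
coincides with
$$\ZZ_3[x_8,y_{10}]\{y_{22},y_{22}^2,x_{20}y_{22},y_{60}\}\;\oplus\;\ZZ_3[x_4,x_8,y_{10}]\{y_{26},y_{26}^2,x_{20}y_{26},y_{64}\}$$
once the summand $\ZZ_3[x_4,x_8,y_{10}]\{1,x_{20},x_{20}^2,y_{58},y_{76}\}\oplus\ZZ_3[x_8,y_{10}]\{y_{22}y_{26}\}$ common to both arrangements is adjoined, and that the monomials in the new arrangement stay $\ZZ_3$-linearly independent.

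For the equality of $\ZZ_3$-spans I would use the relation $y_{26}x_4=x_{20}y_{10}-y_{22}x_8$ of~(\ref{eq:213}) together with its companions in Lemma~\ref{39}, in particular the identities there for $y_{64}x_8$ and $y_{60}x_8$ and the identities for $x_4\,x_{20}y_{26}$ and $x_4\,y_{26}^2$ that they yield. Each of these rewrites a monomial in which $x_4$ multiplies one of $y_{26},y_{26}^2,x_{20}y_{26},y_{64}$ as a $\ZZ_3[x_4,x_8,y_{10}]$-combination of the thirteen generators in the arrangement of Proposition~\ref{37}, and symmetrically; an induction on the power of $x_4$ then shows that the two $\ZZ_3$-spans agree. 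Since the factor $\ZZ_3[x_{36},x_{48},x_{54}]$ is untouched by these manipulations, it follows that the monomials displayed in the statement span $A$ over $\ZZ_3$.

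The point requiring care, and the main obstacle, is that the proposed decomposition is a \emph{direct} sum, i.e. that these monomials are $\ZZ_3$-linearly independent after the powers of $x_4$ have been transferred from the $y_{22}$-monomials onto the $y_{26}$-monomials. Here I would imitate the proof of Lemma~\ref{38}: pass to the associated graded ring $gr\,H$ for the weight filtration $w(t)=w(h_{12})=w(h_{16})=0$, $w(x_4)=1$, $w(x_8)=2$, $w(y_{10})=3$, where by~(\ref{eq:315}) the thirteen generators become the monomials $x_4h_{16}$, $-x_4h_{18}$, $x_8h_{18}$, $h_{12}^3$, $h_{16}^3$, $h_{18}^3$, $x_4^2h_{16}^2h_{18}$, $x_4x_8h_{16}h_{18}^2$, $x_4^2h_{16}h_{18}^2$, $x_4^2h_{16}^2h_{18}^2$ (with $h_{18}=t^9-t^3h_{12}+th_{16}$ in $gr\,H$) in the polynomial ring on the algebraically independent elements $x_4,x_8,y_{10},h_{12},h_{16},h_{18}$. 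A hypothetical vanishing $\ZZ_3$-linear combination of the new family then becomes a polynomial identity among those monomials, and collecting the coefficients of the various powers of $h_{16}$ and $h_{18}$ — using the vanishing $x_4S\cap T=\{0\}$ noted in the proof of Lemma~\ref{38} — forces all of them to vanish. This independence, together with the spanning statement of the previous paragraph, yields the presentation of $A$ asserted in the proposition.
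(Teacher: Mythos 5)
Your plan breaks at the step you label ``and symmetrically''. The relations you invoke --- $y_{26}x_4=x_{20}y_{10}-y_{22}x_8$ and the identities of Lemma~\ref{39} for $y_{58}x_8$, $y_{60}x_8$, $y_{64}x_8$ --- only trade $x_8$-multiples of the $y_{22}$-family for $x_4$-multiples of the $y_{26}$-family; there is no companion relation placing $x_4y_{22}$, $x_4y_{22}^2$, $x_4x_{20}y_{22}$ or $x_4y_{60}$ inside the displayed module, and for the statement as printed there cannot be one. Concretely, in degree $26$ the right-hand side of the Proposition contains only $x_4^4y_{10}$, $x_4^2x_8y_{10}$, $x_8^2y_{10}$ and $y_{26}$, while $x_4y_{22}\in A$ involves the monomial $x_4y_{10}h_{12}$ and so lies outside their span; and in degree $30$ the displayed elements $x_4y_{26}$, $x_8y_{22}$ and $y_{10}x_{20}$ satisfy $x_4y_{26}+x_8y_{22}-x_{20}y_{10}=0$, so the claimed sum is not even direct. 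In other words the printed statement is a misprint: the second summand must carry $\ZZ_3[x_4,y_{10}]$-coefficients, as is implicit in Lemma~\ref{312} and Theorem~\ref{313} and as the Poincar\'e-series count forces (with $\ZZ_3[x_8,y_{10}]$ the series does not agree with that of Proposition~\ref{37}). Your induction on the power of $x_4$ is aimed at the printed form and therefore cannot close; even for the corrected form, spanning needs several of the Lemma~\ref{39} relations combined (e.g.\ to handle $x_8y_{60}$ one must also rewrite $x_{20}y_{22}y_{26}$ via $y_{58}y_{10}$), which your sketch does not address.

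The independence step has the same defect. You propose to reuse the weight $w$ of Lemma~\ref{38}, but under that filtration $y_{22}\equiv -x_4h_{18}$ and $y_{26}\equiv x_8h_{18}$, so $x_8y_{22}$ and $x_4y_{26}$ have proportional leading terms (this is exactly the relation above seen in $gr\,H$), and collecting coefficients of powers of $h_{16}$ and $h_{18}$ cannot separate the mixed family in question; no filtration can, since as printed the family is genuinely dependent. This is precisely why the paper's own proof does not imitate Lemma~\ref{38}: it introduces a different weight, $v(t)=1$, $v(x_4)=+\infty$, $v(x_8)=6$, $v(y_{10})=7$, $v(h_{12})=v(h_{16})=9$, under which $y_{22}\equiv y_{10}h_{12}$ while $y_{26}\equiv x_8h_{18}$, proves linear independence there, and then settles the identification with $A$ not by rewriting but by comparing the Poincar\'e series of the presentation with the one obtained from Proposition~\ref{37}. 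So both halves of your argument are missing the essential new ingredients (the corrected coefficient rings, the second filtration, and the series comparison), and as written the proof does not go through.
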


\begin{proof}
We show that the elements shown at the proposition  are 
linear independent. To do this, we introduce a filtration 
into $H = \ZZ_3[t, x_4, x_8, y_{10}, h_{12}, h_{16}]$.  
We define a weight $v$ by %
$v(t)=1$, \ $v(x_4)=+\infty$, \ $v(x_8)=6$, \ $v(y_{10})= 7$, %
\ $v(h_{12}) = v(h_{16}) = 9$. Then we can define a filtration to $H$ as
defined in the proof of Lemma \ref{38}. 
It is immediate from the definition 
that at $gr H$ we have
\begin{equation}
\begin{array}{l}
\label{eq:315}
x_{20} \equiv x_8h_{12}, \quad 
y_{22} \equiv y_{10}h_{12},  \quad 
y_{26} \equiv x_8h_{18},\\
y_{58} \equiv x_8^2h_{12}^2 h_{18},  \quad 
y_{60} \equiv x_8y_{10}h_{12}^2h_{18},  \quad 
y_{64} \equiv x_8^2h_{12}h_{18}^2,  \quad 
y_{76} \equiv x_8x_4^2h_{12}^2 h_{18}^2, \\
x_{36} \equiv h_{12}^3, \quad 
x_{48} \equiv h_{16}^3, \quad 
x_{54} \equiv h_{18}^3.
\end{array}
\end{equation}
As is Lemma \ref{38} 
shown, we can show 
that the monomials in the above presentation are linearly independent.

We calculate the Poincar\'e polynomial of the presentation.
It is given by

\mathvskip\mbox{}\qquad
$
\Bigg(
\Frac{1+t^{20}+t^{40}+t^{22}+t^{44}+t^{42}+t^{58}+t^{60}+t^{76}}
     {(1-t^4)(1-t^8)(1-t^{10})}
+
\Frac{t^{26}+t^{52}+t^{48}+t^{46}+t^{64}}{(1-t^8)(1-t^{10})}\Bigg)\\[1ex]
\mbox{}\hfill
\times \Frac1{(1-t^{36})(1-t^{48})(1-t^{54})}
.\hspace{3em}
$

\mathvskip\noindent
We see that it coinsides with that of $A$.
Hence we have completed the proof.
\end{proof}

\begin{Lemma}\label{312}
{\sl Let
$$\ZZ_3[x_4, x_{36}, x_{48}, x_{54}] \otimes 
\{ \ZZ_3[x_8, y_{10}]\{\sigma_i \mid 1 \le i \le 9\} \oplus
\ZZ_3[y_{10}] \{\beta_j \mid 1 \le j \le 5\} \}$$
be the presentation shown at Proposition $3.11.$ If we assume that 
$$ \Sum_{i=1}^{9} f_i \cdot g_i \sigma_i
 + \Sum_{j=1}^5 f_j \cdot g_j \beta_j
 \equiv 0  \quad \mod x_4 \ZZ_3[t, x_4, x_8, y_{10}, h_{12}, h_{16}],$$
where $f_i, f_j \in \ZZ_3[x_4, x_{36}, x_{48}, x_{54}]$, 
           $g_i \in \ZZ_3[x_8, y_{10}]$, 
           $g_j \in \ZZ_3[y_{10}]$, 
then $f_{\lambda} \equiv 0 
\ \mod x_4 \ZZ_3[t,\linebreak x_4, x_8, y_{10}, h_{12}, h_{16}]$ holds 
for $\lambda = i,j$.
}
\end{Lemma}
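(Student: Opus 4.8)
The plan is to reduce the whole identity modulo $x_4$ and then to separate the fourteen families $\{\sigma_i\}$, $\{\beta_j\}$ by comparing leading terms with respect to the weight filtration already used in the proofs of Lemma~\ref{38} and Proposition~\ref{311}. Write $\bar a$ for the image of $a\in H=\ZZ_3[t,x_4,x_8,y_{10},h_{12},h_{16}]$ under the quotient map onto $\bar H:=H/x_4H=\ZZ_3[t,x_8,y_{10},h_{12},h_{16}]$. Since $f_\lambda\in\ZZ_3[x_4,x_{36},x_{48},x_{54}]\subset H$ and $\ker(H\to\bar H)=x_4H$, we have $\bar f_\lambda=f_\lambda(0,\bar x_{36},\bar x_{48},\bar x_{54})$, and the assertion ``$f_\lambda\equiv 0\ \mathrm{mod}\ x_4H$'' is exactly ``$\bar f_\lambda=0$ in $\bar H$''. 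Applying the bar to the hypothesis, we are reduced to showing that $\sum_{i=1}^9\bar f_i g_i\bar\sigma_i+\sum_{j=1}^5\bar f_j g_j\bar\beta_j=0$ in $\bar H$ forces $\bar f_\lambda=0$ for every $\lambda$ with $g_\lambda\neq0$ (a summand with $g_\lambda=0$ vanishes and constrains nothing); equivalently, that the $\ZZ_3$-linear map
$$\ZZ_3[\bar x_{36},\bar x_{48},\bar x_{54}]\otimes\Big(\ZZ_3[x_8,y_{10}]\{\bar\sigma_i\mid 1\le i\le 9\}\ \oplus\ \ZZ_3[y_{10}]\{\bar\beta_j\mid 1\le j\le 5\}\Big)\longrightarrow \bar H$$
is injective.

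\textbf{Leading terms.}
I would equip $\bar H$ with the weight $v$ of the proof of Proposition~\ref{311} ($v(t)=1$, $v(x_8)=6$, $v(y_{10})=7$, $v(h_{12})=v(h_{16})=9$), whose associated graded is $\bar H$ itself, a polynomial domain, the symbol of an element being its lowest-weight homogeneous component. Setting $x_4=0$ in the defining relations gives $\bar h_{18}=t^9-t^3h_{12}+th_{16}-tx_8^2+t^5x_8$, $\bar x_{20}=x_8h_{12}$, $\bar y_{22}=y_{10}h_{12}$, $\bar y_{26}=h_{16}y_{10}+x_8\bar h_{18}$, and, working from the $x_4^0$-blocks of (\ref{eq:37})--(\ref{eq:312}) for $y_{58},y_{60},y_{64},y_{76}$, one reads off the leading terms: $\bar x_{36},\bar x_{48},\bar x_{54}$ have leading terms $h_{12}^3$, $h_{16}^3$, $t^{27}$ (all of weight $27$, whence they are algebraically independent over $\ZZ_3$); $\bar\sigma_1,\dots,\bar\sigma_9$ have leading terms
$$1,\ \ x_8h_{12},\ \ x_8^2h_{12}^2,\ \ x_8t^9,\ \ x_8^2t^{18},\ \ x_8^2h_{12}t^9,\ \ x_8^2h_{12}^2t^9,\ \ x_8^2h_{12}t^{18},\ \ x_8^2h_{12}^2t^{18};$$
and $\bar\beta_1,\dots,\bar\beta_5$ have leading terms
$$y_{10}h_{12},\ \ y_{10}^2h_{12}^2,\ \ x_8y_{10}h_{12}^2,\ \ x_8y_{10}h_{12}t^9,\ \ x_8y_{10}h_{12}^2t^9.$$
In particular each $\bar x_{36},\bar x_{48},\bar x_{54}$ equals its leading monomial plus terms of strictly higher weight.

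\textbf{Disjoint supports and descent.}
Since a nonzero $\bar f_\lambda$ is a nonzero polynomial in $\bar x_{36},\bar x_{48},\bar x_{54}$, its symbol is a nonzero $\ZZ_3$-combination of pairwise distinct monomials $h_{12}^{3a}h_{16}^{3b}t^{27c}$; hence the symbol of $\bar f_\lambda g_\lambda\bar\sigma_\lambda$ is $(\text{symbol }\bar f_\lambda)\cdot(\text{symbol }g_\lambda)\cdot(\text{symbol }\bar\sigma_\lambda)$, a nonzero element supported on monomials $h_{12}^{3a+e_\lambda}h_{16}^{3b}t^{27c+f_\lambda}x_8^{p+\gamma_\lambda}y_{10}^{q+\delta_\lambda}$, with $(e_\lambda,f_\lambda,\gamma_\lambda,\delta_\lambda)$ the exponent vector of the leading monomial of $\bar\sigma_\lambda$ (or $\bar\beta_\lambda$). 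The heart of the proof is then the finite check that the fourteen support sets are pairwise disjoint: the $h_{16}$-exponent is always $\equiv 0\pmod 3$ and recovers the $\bar x_{48}$-degree; at a fixed $h_{16}$-degree the $h_{12}$-exponent modulo $3$ together with the $t$-exponent modulo $27$ already separate the nine $\bar\sigma_i$ from each other and leave only the coincidences $\bar\sigma_2/\bar\beta_1$, $\bar\sigma_3/\bar\beta_2/\bar\beta_3$, $\bar\sigma_6/\bar\beta_4$, $\bar\sigma_7/\bar\beta_5$; and in each of these the $x_8$-exponent separates, since for a $\bar\sigma_i$ it runs over all integers $\ge\gamma_i$ (because $g_i\in\ZZ_3[x_8,y_{10}]$) whereas for a $\bar\beta_j$ it equals $\gamma_j$ exactly (because $g_j\in\ZZ_3[y_{10}]$), and the relevant $x_8$-exponent ranges do not overlap. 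Granting disjointness, one descends on the weight: among the $\lambda$ with $\bar f_\lambda\ne0$ and $g_\lambda\ne0$ pick one minimizing the $v$-weight of the symbol of $\bar f_\lambda g_\lambda\bar\sigma_\lambda$; the component of that minimal weight in $\sum\bar f_\lambda g_\lambda\bar\sigma_\lambda=0$ is the sum of the symbols of the extremal summands, which by disjointness of supports cannot vanish — a contradiction. Hence every $\bar f_\lambda$ with $g_\lambda\ne 0$ is zero.

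\textbf{Where the difficulty is.}
The conceptual skeleton above is short and the descent is routine; the real work sits in the middle steps. I expect the main obstacle to be recomputing the $x_4^0$-parts and the $v$-leading monomials of the four ``mixed'' generators $y_{58},y_{60},y_{64},y_{76}$ — whose unreduced $v$-leading monomials still carry a power of $x_4$, so one must use the explicit formulas of Section~4 and Lemma~\ref{39} — and then carrying out the explicit but fiddly verification that the fourteen monomial supports are pairwise disjoint.
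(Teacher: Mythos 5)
Your proof is correct and follows essentially the same route as the paper: you reduce modulo $x_4$, use the weight $v(t)=1$, $v(x_8)=6$, $v(y_{10})=7$, $v(h_{12})=v(h_{16})=9$ from the proof of Proposition \ref{311}, replace the generators by their leading monomials (the congruences (\ref{eq:315}), with $h_{18}$ further replaced by its symbol $t^9$), and conclude by the same monomial-independence/disjoint-support argument that the paper invokes by saying the proof is ``similar to the first part of Proposition \ref{311}.'' The only difference is that you spell out explicitly the computation of the $x_4^0$-parts and the leading monomials of $y_{58},y_{60},y_{64},y_{76}$ and the finite disjointness check, which the paper leaves implicit; your listed symbols are correct.
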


\bigskip
{\bf Proof of Lemma \ref{312}. 
}\quad 
It is sufficient that we prove the statement in 
$\mathcal{G}rH$.
Hence it is enough to prove that
$$ \Sum_{i=1}^9 g_i(x_8^3, y_{10}^3) \wtilde\sigma_i
 + \Sum_{j=1}^5 g_j(y_{10}^3) \wtilde\beta_j \equiv 0 
                                        \quad \mod x_4\,gr H$$
implies $g_i = g_j =0$.

From (\ref{eq:315}), 
all the $\wtilde\sigma_i$ and $\wtilde\beta_j$ contain
no the $x_4$-factor. Then we can prove the statement in a similar way 
to the first part of Proposition \ref{311}.

\begin{Theorem}\label{313}
$(1)$ \enskip {\sl
$H^*(BT; \ZZ_3)^{W(E_6)}$ is generated by the thirteen elements
$$x_4,\  x_8,\  y_{10},\  x_{20},\  y_{22},\  y_{26},\  
x_{36},\  x_{48},\  x_{54},\  y_{58},\  y_{60},\  y_{64},\  y_{76},$$
where these elements are defined from {\rm (\ref{eq:37})
} to {\rm (\ref{eq:312})
}.}

$(2)$ \enskip {\sl The relations are given at Lemma {\rm \ref{39}
}}
\end{Theorem}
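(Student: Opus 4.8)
The plan is to assemble Theorem~\ref{313} from the pieces already in place. Part (1) is the assertion $A = H^*(BT;\ZZ_3)^{W(E_6)}$, where $A$ is the subalgebra generated by the thirteen listed elements. By Theorem~\ref{36} we have $H^*(BT;\ZZ_3)^{W(E_6)} = H \cap \wtilde M$, and since every generator of $A$ visibly lies in $H \cap \wtilde M$ (each is written in the excerpt both as a polynomial in $t,x_4,x_8,y_{10},h_{12},h_{16}$, hence in $H$, and as an element of $\wtilde M$), we get $A \subseteq H \cap \wtilde M$ for free. So the whole content of (1) is the reverse inclusion $H \cap \wtilde M \subseteq A$. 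Part (2) is then just the observation that Lemma~\ref{39} exhibits a complete set of relations among the thirteen generators, once we know that the presentation in Proposition~\ref{311} is a free-module decomposition realizing $A$.

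First I would fix the $\ZZ_3$-module structure of $A$: by Lemma~\ref{38} the map $\sigma$ is injective, by Lemma~\ref{310} it is surjective, so Proposition~\ref{37} gives $A \cong C \otimes \ZZ_3[x_{36},x_{48},x_{54}]$ as graded $\ZZ_3$-modules, with Poincar\'e series $PS(A) = g(t)\big/\big((1-t^4)(1-t^8)(1-t^{10})(1-t^{36})(1-t^{48})(1-t^{54})\big)$. Next I would compute $PS(H \cap \wtilde M)$ independently. Here the decomposition $H \subset \wtilde M\{\,t^i h_{12}^j \mid 0\le i\le 8,\ 0\le j\le 2\}$ from Lemma~\ref{35}(2), together with the linear independence of $\{t^i h_{12}^j\}$ over $\wtilde M$ from Lemma~\ref{35}(1), lets me describe $H \cap \wtilde M$ as the $\wtilde M$-submodule of $H$ spanned by those $t^i h_{12}^j$ that can be combined into a polynomial (no negative powers of $x_4$); equivalently, using Theorem~\ref{36}, $H \cap \wtilde M = H \cap K$, and I would track the grading through the chain $H \hookrightarrow \wtilde M\{t^i h_{12}^j\}$. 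The key quantitative input is Lemma~\ref{312}: modulo $x_4$, the presentation of $A$ in Proposition~\ref{311} remains linearly independent, which is exactly what is needed to see that the natural inclusion $A \hookrightarrow H \cap \wtilde M$ is onto in each degree, i.e.\ that no element of $H\cap\wtilde M$ escapes $A$. Concretely, an element $f \in H \cap \wtilde M$ is a priori an $\wtilde M$-combination of $t^i h_{12}^j$; rewriting $x_{20},y_{22},y_{26},\dots$ in terms of $t,h_{12},h_{16}$ via (\ref{eq:315}) and using Lemma~\ref{312} to clear denominators $x_4$ step by step shows $f$ lies in the $A$-span of the Proposition~\ref{311} monomials, hence in $A$.

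So the argument for (1) is: (a) $A \cong C\otimes\ZZ_3[x_{36},x_{48},x_{54}]$ with the stated $PS(A)$; (b) $A \subseteq H\cap\wtilde M$ is obvious; (c) Lemma~\ref{312} forces every $x_4$-divisibility obstruction encountered when expressing an element of $H\cap\wtilde M$ in the $\{t^i h_{12}^j\}$-basis to be resolved inside $A$, whence $H\cap\wtilde M \subseteq A$; (d) combine with Theorem~\ref{36}. For (2), once $A = H^*(BT;\ZZ_3)^{W(E_6)}$ and the free presentation of Proposition~\ref{311} is established, the multiplication table of the non-polynomial generators against each other is precisely dictated by re-expanding each product in that basis; Lemma~\ref{39} records these re-expansions, and since the presentation is a free module over $\ZZ_3[x_4,x_{36},x_{48},x_{54}]$ (Proposition~\ref{311}), these relations generate the ideal of relations. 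The main obstacle is step (c): one must verify that the $x_4$-clearing really terminates inside $A$ and does not require enlarging the generating set, and this is where Lemma~\ref{312} is indispensable — it is the bookkeeping lemma that controls, degree by degree, that the only way a polynomial (as opposed to Laurent) expression can arise is through combinations already present in the Proposition~\ref{311} decomposition. A secondary, purely mechanical hazard is confirming that $PS(H\cap\wtilde M)$ computed from Lemma~\ref{35} matches $PS(A)$ from Proposition~\ref{37}; equality of these two series upgrades the inclusion $A\subseteq H\cap\wtilde M$ to an equality without any further argument, which is the cleanest way to finish.
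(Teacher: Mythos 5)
Your proposal is correct and follows essentially the paper's own route: reduce via Theorem \ref{36} to showing $H\cap\wtilde M\subseteq A$, note $\wtilde M$ sits inside $A$ with $x_4$ inverted, and then use the presentation of Proposition \ref{311} together with Lemma \ref{312} to clear the $x_4$-denominators step by step, with Lemma \ref{39} supplying the relations for part (2). The Poincar\'e-series comparison you mention at the end is an optional extra the paper does not carry out (it never computes $PS(H\cap\wtilde M)$ directly); the denominator-clearing argument alone, exactly as in the paper, already finishes the proof.
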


\begin{proof}
From Theorem \ref{36},
we have 
$$H^*(BT;\ZZ_3)^{W(E_6)} = H \cap \wtilde M,$$ 
where $
  H        = \ZZ_3[\ts t, x_4, x_8, y_{10}, h_{12}, h_{16}]$, 
$\wtilde M = \ZZ_3[x_4,x_4^{-1}][x_8, y_{10}, x_{20}, y_{22}, x_{36}]$.
Let $A$ be an algebra generated by $x_4$, \ldots, $y_{76}$.
Then $\ZZ_3[x_4,x_4^{-1}] \mathop{\otimes}\limits_{\sZZ_3[x_4]} A$ %
contains $\wtilde M$. Hence it is enough to prove that %
$H \cap 
\Big(\ZZ_3[x_4,x_4^{-1}] \mathop{\otimes}\limits_{\sZZ_3[x_4]} A \Big) = A$. %

In other word, when we use the presentation given at Proposition \ref{311}, 
it is sufficient to show that 
$$ \Sum_{i=1}^9 f_i \cdot g_i(x_8^3, y_{10}^3) \sigma_i
 + \Sum_{j=1}^5    f_j \cdot g_j(y_{10}^3) \beta_j \equiv 0 
                                                \quad \mod x_4 H,$$
$f_i, f_j \in \ZZ_3[x_4, x_{36}, x_{48}, x_{54}]$ 
implies $f_{\lambda} \equiv 0 \mod x_4 H$ for all $\lambda = i,j$.

It is just the same as the statement of Lemma \ref{312}. 
Hence we have completed the proof.
\end{proof}
\section{Appendix}
 From now on, coefficients will be in  $\ZZ_3^{}$  throughout the
calculation.

Denote by $c\ts_i^{}
$  %
and  $p\ts_i^{} 
$
the elementary symmetric functions on  $\{t\ts_i^{}\}$  and  $\{t\ts_j^2\}$, respectively.
Then we have
\begin{equation}
\begin{array}[t]{llllllll}
\label{eq:a11}
& c_2^{}       & = p_1^{} - c_1^2, \\
& c_4^{}       & = {}- p_2^{} + c_1^4 + c_1^2p_1^{} + c_1^{}c_3^{} + p_1^2, \\
& c_3^{}c_5^{} & = p_4^{} - c_4^2, \\
& c_3^2        & = p_3^{} + c_1^6 + c_1^3c_3^{} - c_1^2p_2^{} 
		- c_1^{}c_3^{}p_1^{} + c_1^{}c_5^{} - p_1^3 + p_1^{}p_2^{}.
\end{array}
\end{equation}

The first six $W(E_6^{})$-invariant elements \ $x_4^{}$,\ $x_8^{}$,\ $y_{10}^{}$,\ $x_{20}^{}$,\ $y_{22}^{}$ and \ $y_{26}^{}$ \ are easily found as is in Section 1: 

\begin{equation}
\begin{array}[t]{llllllll}
\label{eq:a12}
&x_4 = p_1^{},\\ 
&x_8 = p_2^{} - p_1^2 ,\\ 
&y_{10} = c_5^{} - x_8^{} t - x_4^{} t^3,\\
&x_{20}^{} = h_{12}^{}x_8^{} + h_{16}^{}x_4^{},\\
&y_{22}^{} = h_{12}^{}y_{10}^{} - h_{18}^{}x_4^{},\\
&y_{26}^{} = h_{16}^{}y_{10}^{} + h_{18}^{}x_8^{},
\end{array}
\end{equation}
where
%
\begin{equation}
\begin{array}[t]{llllllll}
\label{eq:a13}
&h_{12} = p_3^{} + y_{10}^{} t - x_8^{} t^2 - x_4^{} t^4,\\
&h_{16} = p_4^{} + y_{10}^{} t^3 - x_8^{} t^4 - x_4^{} t^6,\\
&h_{18}^{} = t \ts (h_{16}^{} - x_8^2) + t^3( - h_{12}^{} + x_4^{}x_8^{}) 
		+ t^5x_8^{} - t^7x_4^{} + t^9.
\end{array}
\end{equation}

Observe that there holds the following relation:
\begin{equation}
\label{eq:a14}- x_{20}^{}y_{10}^{} + y_{22}^{}x_8^{} + y_{26}^{}x_4^{} = 0.
\end{equation}

{ Calculation of $\sigma_j^S$ is carried out as follows.}

Put $w_1 = t - c_1^{}$ and $w_i = t + c_1^{} - t_{i-1}^{}$ for $2 \le i \le 6$.
The set  $$S = \{w_i^{} + w_j^{},\ c_1^{} - w_i^{}, \ {}- c_1^{} - w_i^{} \ ; \ i<j \}$$
is invariant as a set under the action of  $W(E_6^{})$ (see \cite{TW}).  
Therefore the elementary symmetric functions $\sigma_j^S$  on $S$ are 
invariant under the action of  $W(E_6^{})$.

Put

\begin{equation}
\begin{array}{llllllll}
\label{eq:a21}
&P &= 1 +  \Sum_{j=1}^{27} \sigma_j^S =  \Prod_{y \in S}\; (1 + y)\\
&  &=   \Prod_{1 \le i < j \le 6} \;(1 + w_i^{} + w_j^{}) 
	\Prod_{1 \le j \le 6}(1 + c_1^{} - w_j^{}) 
	\Prod_{1 \le j \le 6}(1 - c_1^{} - w_j^{}).
\end{array}
\end{equation}

(We shall rewrite $P$ as far as possible in terms of the six invariant elements in (\ref{eq:a12})%
.)

$P$ is expressed by $t$ and $c_i^{}$'s and has 2600 terms.
We replace $c_2^{}$, $c_4^{}$, $c_5^{}$, $c_3^2$ and $c_3^{}y_{10}^{}$ %
in  $P$  by the following relations:

\begin{eqnarray}
\label{eq:a22}
 c_2^{} & = & {}- c_1^2 + x_4^{},\\   
 c_4^{} & = & c_1^4 + c_1^{} c_3^{} + c_1^2 x_4^{} - x_8^{},\nonumber\\
 c_5^{} & = & y_{10}^{} + x_8^{} t + x_4^{} t^3,\nonumber\\
\label{eq:a23}
 c_3^2 & = & h_{12}^{} + t^4x_4^{} + t^2x_8^{} - t \ts y_{10}^{} + x_4^{}x_8^{}
    + t^3x_4^{}c_1^{} + t \ts x_8^{}c_1^{}
 - x_4^2c_1^2 \\
&&- x_4^{}c_1^{}c_3^{} 
    - x_8^{}c_1^2  {}+ y_{10}^{}c_1^{} + c_1^6 + c_1^3c_3^{},\nonumber
\end{eqnarray}
\begin{eqnarray}
\label{eq:a24}
 c_3^{}y_{10}^{} & = & h_{16}^{} + t^6x_4^{} + t^4x_8^{} - t^3y_{10}^{} - x_8^2  
    - t^4x_4^{}c_1^2 - t^3x_4^{}c_1^3\\ && - t^3x_4^{}c_3^{} - t^2x_8^{}c_1^2
  - t \ts x_8^{}c_1^3 
    - t \ts x_8^{}c_3^{} + t \ts y_{10}^{}c_1^2 
    - h_{12}^{}c_1^2 + x_4^{}x_8^{}c_1^2 \nonumber \\ &&+ x_4^{}c_1^6 - x_4^{}c_1^3c_3^{} 
    - x_8^{}c_1^{}c_3^{}
    - y_{10}^{}c_1^3 + c_1^8\nonumber
\end{eqnarray}

and we see that all $c_3^{}$'s are cancelled.

We use two more relations to eliminate $c_1^{}$'s. 

Replacing $c_3^2$ in the equality $c_3^2y_{10} - c_3\cdot c_3^{}y_{10}^{} 
= 0$, we get a relation:

\begin{equation}
\begin{array}{llllllll}
\label{eq:a25}
& h_{16}^{}c_3^{} = & t^9x_4^{} + t^7x_4^2 + t^7x_8^{} - t^6y_{10}^{} 
    - t^5x_4^{}x_8^{} + t^3h_{12}^{}x_4^{} + t^3h_{16}^{} + t^3x_4^2x_8^{}
    + th_{12}^{}x_8^{} 
\\ &&
    + t \ts x_4^{}x_8^2 - t \ts y_{10}^2  + h_{12}^{}y_{10}^{} + x_4^{}x_8^{}y_{10}^{}  
    + t^7x_4^{}c_1^2 + t^6x_4^{}c_1^3
    + t^6x_4^{}c_3^{} + t^5x_4^{}c_1^4 + t^5x_8^{}c_1^2 
\\ &&
    - t^4x_4^2c_1^3 - t^4x_4^{}x_8^{}c_1^{} - t^4x_4^{}c_1^5 - t^4x_4^{}c_1^2c_3^{}
    + t^4x_8^{}c_1^3 + t^4x_8^{}c_3^{} - t^4y_{10}^{}c_1^2 - t^3h_{12}^{}c_1^2 
\\ &&
    - t^3x_4^3c_1^2 - t^3x_4^2c_1^4 + t^3x_4^{}x_8^{}c_1^2
    - t^3x_4^{}c_1^3c_3^{} + t^3x_8^{}c_1^4 - t^3x_8^{}c_1^{}c_3^{} 
    + t^3c_1^8 - t^2x_4^{}x_8^{}c_1^3 
\\ &&
    - t^2x_8^2c_1^{} - t^2x_8^{}c_1^5
    - t^2x_8^{}c_1^2c_3^{} - t^2y_{10}^{}c_1^4 + t \ts h_{12}^{}c_1^4 
    - t \ts h_{16}^{}c_1^2 - t \ts x_4^2x_8^{}c_1^2 + t \ts x_4^{}x_8^{}c_1^4
\\ &&
    + t \ts x_4^{}y_{10}^{}c_1^3 - t \ts x_4^{}c_1^8 + t \ts x_4^{}c_1^5c_3^{} 
    + t \ts x_8^2c_1^2 + t \ts x_8^{}y_{10}^{}c_1^{} - t \ts x_8^{}c_1^6 + t \ts x_8^{}c_1^3c_3^{}
    - t \ts c_1^{10} 
\\ &&
    - h_{12}^{}x_4^{}c_1^3 + h_{12}^{}x_8^{}c_1^{} + h_{12}^{}c_1^5 
    + h_{12}^{}c_1^2c_3^{} - h_{16}^{}x_4^{}c_1^{} - h_{16}^{}c_1^3 - x_4^3c_1^5
    - x_4^2x_8^{}c_1^3 
\\ &&
    - x_4^2y_{10}^{}c_1^2 - x_4^2c_1^7 - x_4^{}x_8^2c_1^{} 
    + x_4^{}x_8^{}c_1^5 - x_4^{}x_8^{}c_1^2c_3^{} - x_4^{}y_{10}^{}c_1^4
    - x_4^{}c_1^9 + x_4^{}c_1^6c_3^{} 
\\ &&
    + x_8^2c_3^{} + x_8^{}c_1^7 - x_8^{}c_1^4c_3^{} + y_{10}^2 c_1^{} 
    - y_{10}^{}c_1^6 - c_1^{11} - c_1^8c_3^{}.
\end{array}
\end{equation}

Making use of rewriting $c_3^2$, $c_3y_{10}$ and $c_3^{}h_{16}^{}$, 
the equality $c_3^2y_{10}^2 - (c_3^{}y_{10}^{})^2 = 0$  gives rise 
to the following relation.

\begin{equation}
\begin{array}{llllllll}
\label{eq:a26}
\end{array}
\begin{array}{llllllll}
& c_1^{16} = & {}- t^{12}x_4^2 + t^{10}x_4^3 + t^{10}x_4^{}x_8^{} - t^9x_4^{}y_{10}^{} 
    - t^8x_8^2 + t^7x_4^2y_{10}^{} - t^7x_8^{}y_{10}^{}
    + t^6h_{12}^{}x_4^2 
\\ & & {}    
    + t^6h_{16}^{}x_4^{} + t^6x_4^3x_8^{} - t^6x_4^{}x_8^2 - t^6y_{10}^2  
    - t^5x_4^{}x_8^{}y_{10}^{} - t^4h_{12}^{}x_4^{}x_8^{}
    + t^4h_{16}^{}x_8^{} 
\\ & & {}     
    - t^4x_4^2x_8^2 - t^4x_4^{}y_{10}^2  - t^3h_{12}^{}x_4^{}y_{10}^{} 
    - t^3h_{16}^{}y_{10}^{} - t^3x_4^2x_8^{}y_{10}^{}
    - t^3x_8^2y_{10}^{} + t^2h_{12}^{}x_8^2 
\\ & & {}     
    + t^2x_4^{}x_8^3 - t^2x_8^{}y_{10}^2  - t \ts h_{12}^{}x_8^{}y_{10}^{} 
    - t \ts x_4^{}x_8^2y_{10}^{} - t \ts y_{10}^3 
    + h_{12}^{}y_{10}^2  - h_{16}^2  - h_{16}^{}x_8^2 
\\ & & {}     
    + x_4^{}x_8^{}y_{10}^2  - x_8^4 - t^{10}x_4^2c_1^2 - t^8x_4^2c_1^4 
    + t^8x_4^{}x_8^{}c_1^2
    - t^7x_4^{}y_{10}^{}c_1^2 - t^6h_{12}^{}x_4^{}c_1^2  
\\ & & {}     
    - t^6x_4^4c_1^2 - t^6x_4^3c_1^4 + t^6x_4^2x_8^{}c_1^2 + t^6x_4^2c_1^6
    - t^6x_4^{}x_8^{}c_1^4 + t^6x_4^{}c_1^8 - t^6x_8^2c_1^2 
\\ 
\end{array}
\end{equation}
\begin{equation*}
\begin{array}{llllllll}
& & {}     
    - t^5x_4^{}y_{10}^{}c_1^4 - t^5x_8^{}y_{10}^{}c_1^2 + t^4h_{12}^{}x_4^{}c_1^4
    - t^4h_{12}^{}x_8^{}c_1^2 - t^4h_{16}^{}x_4^{}c_1^2 + t^4x_4^3x_8^{}c_1^2 
\\ 
& & {}     
    - t^4x_4^3c_1^6 + t^4x_4^2x_8^{}c_1^4 
    + t^4x_4^2c_1^8
    + t^4x_4^{}x_8^2c_1^2 - t^4x_4^{}x_8^{}c_1^6 - t^4x_4^{}c_1^{10} + t^4x_8^{}c_1^8 
\\ 
& & {}     
    - t^4y_{10}^2 c_1^2 - t^3h_{12}^{}x_4^{}x_8^{}c_1^{}
    + t^3h_{12}^{}y_{10}^{}c_1^2 - t^3h_{16}^{}x_4^2c_1^{} 
    + t^3x_4^4c_1^5 - t^3x_4^3x_8^{}c_1^3 
\\ 
& & {}     
    + t^3x_4^3y_{10}^{}c_1^2 + t^3x_4^3c_1^7
    + t^3x_4^2x_8^{}c_1^5 + t^3x_4^2y_{10}^{}c_1^4 - t^3x_4^{}x_8^2c_1^3 
    - t^3x_4^{}x_8^{}y_{10}^{}c_1^2 
\\ & & {}
    + t^3x_4^{}x_8^{}c_1^7 + t^3x_4^{}y_{10}^2 c_1^{} 
    - t^3x_4^{}y_{10}^{}c_1^6 + t^3x_8^{}y_{10}^{}c_1^4 - t^3y_{10}^{}c_1^8 
    + t^2h_{12}^{}x_8^{}c_1^4
\\ 
& & {}     
    - t^2h_{16}^{}x_8^{}c_1^2 - t^2x_4^2x_8^2c_1^2 - t^2x_4^2x_8^{}c_1^6 
    - t^2x_4^{}x_8^2c_1^4 + t^2x_4^{}x_8^{}c_1^8 + t^2x_8^2c_1^6
    - t^2x_8^{}c_1^{10} 
\\ & & {}     
    - t^2y_{10}^2 c_1^4 - t \ts h_{12}^{}x_8^2c_1^{} - t \ts h_{12}^{}y_{10}^{}c_1^4 
    - t \ts h_{16}^{}x_4^{}x_8^{}c_1^{}
    + t \ts h_{16}^{}y_{10}^{}c_1^2 + t \ts x_4^3x_8^{}c_1^5 
\\ 
& & {}     
    - t \ts x_4^2x_8^2c_1^3 + t \ts x_4^2x_8^{}y_{10}^{}c_1^2 
    + t \ts x_4^2x_8^{}c_1^7 + t \ts x_4^2y_{10}^{}c_1^6
    + t \ts x_4^{}x_8^2c_1^5 + t \ts x_4^{}x_8^{}y_{10}^{}c_1^4 
\\ & & {}     
    - t \ts x_4^{}y_{10}^{}c_1^8 
    - t \ts x_8^3c_1^3 + t \ts x_8^2c_1^7 + t \ts x_8^{}y_{10}^2 c_1^{}
    - t \ts x_8^{}y_{10}^{}c_1^6 + t \ts y_{10}^{}c_1^{10} 
    - h_{12}^2 c_1^4 - h_{12}^{}h_{16}^{}c_1^2 
\\ & & {}     
    - h_{12}^{}x_4^2c_1^6 - h_{12}^{}x_4^{}x_8^{}c_1^4
    + h_{12}^{}x_4^{}c_1^8 - h_{12}^{}x_8^2c_1^2 - h_{12}^{}x_8^{}y_{10}^{}c_1^{} 
    - h_{12}^{}x_8^{}c_1^6 - h_{12}^{}c_1^{10} 
\\ 
& & {}     
    - h_{16}^{}x_4^2c_1^4 - h_{16}^{}x_4^{}y_{10}^{}c_1^{} - h_{16}^{}x_4^{}c_1^6 
    + h_{16}^{}x_8^{}c_1^4 + h_{16}^{}c_1^8 
    - x_4^4c_1^8 + x_4^3x_8^{}c_1^6 + x_4^3y_{10}^{}c_1^5
\\ & & {}     
    - x_4^3c_1^{10} - x_4^2x_8^{}y_{10}^{}c_1^3 - x_4^2y_{10}^2 c_1^2 
    + x_4^2y_{10}^{}c_1^7 + x_4^{}x_8^3c_1^2 
    + x_4^{}x_8^{}y_{10}^{}c_1^5 - x_4^{}c_1^{14} + x_8^3c_1^4 
\\ & & {}     
- x_8^2y_{10}^{}c_1^3 + x_8^{}y_{10}^2 c_1^2 
    + x_8^{}y_{10}^{}c_1^7 + x_8^{}c_1^{12} + y_{10}^3 c_1^{} - y_{10}^2 c_1^6.
\end{array}
\end{equation*}

Observe that there is no $c_3^{}$ in (\ref{eq:a26}).

Making use of (\ref{eq:a26}), 
we find that $c_1$ is cancelled out and %
$P \in  \ZZ_3^{}[t,x_4^{},x_8^{},y_{10}^{},h_{12}^{},h_{16}^{}]$.
The highest degree of \ $t$ \ is 27.

From (\ref{eq:a13}) and (\ref{eq:a14})
the following rewriting rules are of use.
\begin{eqnarray}
\label{eq:a27} & t^9& = h_{18}^{} + t^7x_4^{} - t^5x_8^{} + t^3h_{12}^{} - t^3x_4^{}x_8^{} 
 	- t \ts h_{16}^{} + t \ts x_8^2,\\
\label{eq:a28}& h_{18}^{}x_8^{}& = {}- h_{16}^{}y_{10}^{} + y_{26}^{}, \\
& h_{18}^{}x_4^{} &= h_{12}^{}y_{10}^{} - y_{22}^{}, \nonumber\\
& h_{16}^{}x_4^{} &= {}- h_{12}^{}x_8^{} + x_{20}^{}, \nonumber\\
& y_{26}^{}x_4^{}& = x_{20}^{}y_{10}^{} - y_{22}^{}x_8^{},\nonumber
\end{eqnarray}
and, in case of need
\begin{eqnarray}
& h_{18}^{}x_{20}^{} &= h_{12}^{}y_{26}^{} - h_{16}^{}y_{22}^{}\hspace{5.8cm}\nonumber.
\end{eqnarray}

     Now we decompose $P$ by degree and see that $\sigma_j^S$  for $j=18, 24, 27$ 
are not expressed  by $x_4^{}$,\ $x_8^{}$,\ $y_{10}^{}$,\ $x_{20}^{}$,\ $y_{22}^{}$ and \ $y_{26}^{}$.

     The element  $\sigma_{18}^S$   is now  as follows.

$\begin{array}{llllllll}
\hspace*{2em}
& \sigma_{18}^S & = & {} - x_{20}^{}x_4^2x_8^{} - x_{20}^{}x_8^2 - y_{22}^{}x_4^{}y_{10}^{} 
	- y_{26}^{}y_{10}^{} - x_4^3x_8^3 + x_4^2x_8^{}y_{10}^2 + x_4^{}x_8^4 - x_8^2y_{10}^2
\\&&&{}
	- h_{12}^3 + h_{12}^2x_4^{}x_8^{} + h_{12}^{}x_{20}^{}x_4^{} - h_{12}^{}x_4^2x_8^2  
    - t^8x_4^3x_8^{} + t^7x_4^3y_{10}^{} - t^6x_4^6 + t^6x_4^4x_8^{} 
\\&&&{}
    + t^4x_4^5x_8^{} - t^4x_4^3x_8^2 + t^3h_{12}^{}x_4^2y_{10}^{} 
    - t^3y_{22}^{}x_4^2 + t^3x_4^5y_{10}^{} - t^3x_4^3x_8^{}y_{10}^{} 
\\&&&{}
    + t^2x_{20}^{}x_4^3 - t^2x_4^3y_{10}^2 
    - t \ts y_{22}^{}x_4^3 - t \ts x_4^4x_8^{}y_{10}^{}.
\end{array}$

Collect the terms with $t$ in  $\sigma_{18}^S$  and we put
\begin{equation}
\begin{array}{llllllll}
\label{eq:a29}
& g_{24}^{} & = & {} - t^8x_8^{} + t^7y_{10}^{} - t^6x_4^3 + t^6x_4^{}x_8^{} 
	+ t^4x_4^2x_8^{} - t^4x_8^2 + t^3h_{18}^{} + t^3x_4^2y_{10}^{} - t^3x_8^{}y_{10}^{} 
\\&&&{}
     + t^2x_{20}^{} - t^2y_{10}^2 - t \ts y_{22}^{} - t\ts x_4^{}x_8^{}y_{10}^{}
\end{array}
\end{equation}
then define
\begin{equation}
\begin{array}{llllllll}
\label{eq:a210}
& x_{36}^{} = {} - g_{24}^{}x_4^3 + h_{12}^3 - h_{12}^2x_4^{}x_8^{} - h_{12}^{}x_{20}^{}x_4^{} + h_{12}^{}x_4^2x_8^2,\\
& x_{48} = g_{24}^{} x_8^3+ h_{16}^3+h_{16}^2 x_8^2+h_{16} x_8^4,\\
& x_{54} = {}- g_{24}^{} y_{10}^3+h_{18}^3 - h_{16}^{} h_{18}^{} y_{10}^2+h_{18}^2 x_8^{} y_{10}^{} + h_{18}^{} x_8^2 y_{10}^2.
\end{array}
\end{equation}

     The elementary symmetric functions on $S$ are calculated as follows:

\begin{equation}
\begin{array}{llllllll}
\label{eq:a211}
& \sigma_i^S & = & 0 \quad \hbox{for} \quad  1 \le i \le 5  
			\quad \hbox{and} \quad  i = 7,{10},{11},{13},{19}, \\
& \sigma_6^S & = & {}- x_4^3 - x_4^{}x_8^{}, \\
& \sigma_8^S & = & {}- x_4^2x_8^{} - x_8^2, \\
& \sigma_9^S & = & {}- x_4^2y_{10}^{} - x_8^{}y_{10}^{}, \\
& \sigma_{12}^S & = &  - x_{20}^{}x_4^{} + x_4^6 - x_4^4x_8^{} + x_4^{}y_{10}^2 - x_8^3, \\
& \sigma_{14}^S & = & x_{20}^{}(x_4^2 - x_8^{}) - x_4^5x_8^{} 
			- x_4^3x_8^2 - x_4^2y_{10}^2  + x_8^{}y_{10}^2 , \\
& \sigma_{15}^S & = & x_{20}^{}y_{10}^{} + y_{22}^{}(x_4^2 + x_8^{}) 
		- x_4^5y_{10}^{} + x_4^{}x_8^2y_{10}^{} - y_{10}^3 , \\
& \sigma_{16}^S & = &  - x_{20}^{}x_4^3 + x_4^3y_{10}^2 , \\
& \sigma_{17}^S & = & x_{20}^{}x_4^{}y_{10}^{} 
		+ y_{22}^{}(x_4^3 - x_4^{}x_8^{}) - y_{26}^{}x_8^{} 
		\\
&&&+ x_4^4x_8^{}y_{10}^{} - x_4^2x_8^2y_{10}^{} + x_4^{}y_{10}^3  
		+ x_8^3y_{10}^{},\\
& \sigma_{18}^S & = & 
{}-x_{36} -x_{20}^{} x_4^2 x_8^{} -x_{20}^{} x_8^2 -x_4^3 x_8^3\\
&&&
+x_4^{} x_8^4 +x_4^2 x_8^{} y_{10}^2 -x_8^2 y_{10}^2
-x_4^{} y_{10}^{} y_{22}^{} -y_{10}^{} y_{26}^{},\\
& \sigma_{20}^S & = & 
{}-x_{20}^2 +x_{20}^{} x_4^{} x_8^2 +x_4^2 x_8^4 +x_8^5
-x_{20}^{} y_{10}^2 -x_4^{} x_8^2 y_{10}^2 -y_{10}^4,\\
& \sigma_{21}^S & = & 
x_{20}^{} x_4^{} x_8^{} y_{10}^{} + x_4^3 y_{10}^3 +x_4^{} x_8^{} y_{10}^3
-x_{20}^{} y_{22}^{} -x_4^{} x_8^2 y_{22}^{} +y_{10}^2 y_{22}^{} +x_8^2 y_{26}^{},\\
& \sigma_{22}^S & = & 
{}-x_{20}^{} x_8^3 +x_8^3 y_{10}^2,\\
& \sigma_{23}^S & = & 
{}-x_4^2 x_8^{} y_{10}^3 -x_8^2 y_{10}^3 +x_{20}^{} y_{26}^{} -y_{10}^2 y_{26}^{},\\
& \sigma_{24}^S & = & 
x_{48}^{} -x_{20}^{} x_8^{} y_{10}^2
+x_4^{} x_8^3 y_{10}^2 - x_4^2 y_{10}^4 + x_8^{} y_{10}^4 + y_{22}^{} y_{26}^{},\\
\end{array}
\end{equation}

$\begin{array}{llllllll}
\hphantom{(2.11)}\hspace{1.2em}  
& \sigma_{25}^S & = & 
{}-x_{20}^{} y_{10}^3 +y_{10}^5,\\
& \sigma_{26}^S & = & 
{}-x_8^4 y_{10}^2 + x_4^{}x_8^{}y_{10}^4 + y_{10}^3y_{22}^{} 
- x_8^2 y_{10}^{}y_{26}^{} -y_{26}^2,\\
& \sigma_{27}^S & = & 
{}-x_{54}.
\end{array}$

\bigskip
Note that the elements $x_4^{}$,\ $x_8^{}$,\ $y_{10}^{}$,\ $x_{20}^{}$,\ $y_{22}^{}$, \ $y_{26}^{}$, \ $x_{36}^{}$, \ $x_{48}^{}$ and $x_{54}^{}$ are also found 
in Section 3 by making use of Galois theory.



\end{document}